\newtheorem{theorem}{Theorem}[section]
\newtheorem{proposition}[theorem]{Proposition}
\newtheorem{lemma}[theorem]{Lemma}
\newtheorem{definition}[theorem]{Definition}
\newcommand{\footremember}[2]{%
	\footnote{#2}
	\newcounter{#1}
	\setcounter{#1}{\value{footnote}}%
}
\newcommand{\footrecall}[1]{%
	\footnotemark[\value{#1}]%
}
\newcommand{\R}{{\mathbb R}}
\newcommand{\C}{{\mathbb C}}
\newcommand{\N}{{\mathbb N}}
\DeclareMathOperator{\supp}{supp}     
\newcites{JM}{References}     
\title{New challenges in covariance estimation: multiple structures and coarse quantization}
\author{ Johannes Maly\footnote{Catholic University of Eichstaett-Ingolstadt, Ostenstraße 26-28, 85072 Eichstätt, Germany}%
	\and Tianyu Yang\footremember{TU_Berlin}{Technical University of Berlin, Einsteinufer 25, 10587 Berlin, Germany}%
	\and Sjoerd Dirksen\footnote{Utrecht University, Budapestlaan 6, 3584 CD Utrecht, Netherlands}%
	\and Holger Rauhut\footnote{RWTH Aachen University, Pontdriesch 10, 52062 Aachen, Germany}%
	\and Giuseppe Caire\footrecall{TU_Berlin}}%
\date{}
\begin{document}
	
	
%
%
\maketitle

\begin{abstract}
	In this self-contained chapter, we revisit a fundamental problem of multivariate statistics: estimating covariance matrices from finitely many independent samples. Based on massive Multiple-Input Multiple-Output (MIMO) systems we illustrate the necessity of leveraging structure and considering quantization of samples when estimating covariance matrices in practice. We then provide a selective survey of theoretical advances of the last decade focusing on the estimation of structured covariance matrices. This review is spiced up by some yet unpublished insights on how to benefit from combined structural constraints. Finally, we summarize the findings of our recently published preprint ``Covariance estimation under one-bit quantization'' \cite{dirksen2021covariance}  to show how guaranteed covariance estimation is possible even under coarse quantization of the samples.
\end{abstract}


\section{Introduction}
\label{JMsec:1}

The key objective in covariance estimation is simple to state. Given $n \in \N$ i.i.d.\ samples $\mathbf{X}^1,...,\mathbf{X}^n \overset{\mathrm{d}}{\sim} \mathbf{X}$ of a random vector $\mathbf{X} \in \R^p$, compute a reliable estimate of the covariance matrix $\mathbb E [\mathbf{X}\mathbf{X}^\top] = \boldsymbol{\Sigma} \in \R^{p\times p}$ (without loss of generality, we restrict ourselves here to mean-zero distributions, i.e., $\mathbb{E} [\mathbf{X}] = \boldsymbol{0}$).  For this purpose, a natural estimator is the \emph{sample covariance matrix}
\begin{align} \label{JMeq:SampleMean}
	\hat{\boldsymbol{\Sigma}}_n = \frac{1}{n} \sum_{k=1}^n \mathbf{X}^k (\mathbf{X}^k)^\top
\end{align}
as it converges to $\boldsymbol\Sigma$, for $n \rightarrow \infty$, by the law of large numbers. Nevertheless, an asymptotic result is of limited use from practical perspective. Given $n \in \N$ it provides no information on the reconstruction error $\| \hat{\boldsymbol\Sigma}_n - \boldsymbol\Sigma \|$ measured in an appropriate norm. (We will concentrate in the following on operator norm bounds.)\\ 
In the last two decades, numerous works on non-asymptotic analysis of covariance estimation showed that reliable approximation of $\boldsymbol\Sigma$ by $\hat{\boldsymbol\Sigma}_n$ becomes feasible for subgaussian distributions if $n \gtrsim p$, where $a \lesssim b$ denotes $a \le C b$ for some absolute constant $C > 0$. For instance, if $\mathbf{X}$ follows a Gaussian distribution it is well-known \cite{vershynin2018high} that with probability at least $1-2e^{-t}$
\begin{align} \label{JMeq:Estimation_Gauss}
	\| \hat{\boldsymbol\Sigma}_n - \boldsymbol\Sigma \| \lesssim \| \boldsymbol\Sigma \| \left( \sqrt{\frac{p + t}{n}} + \frac{p + t}{n} \right).
\end{align}
This classical result exhibits various weaknesses. For instance, it requires strong concentration of the distribution of $\mathbf{X}$ around its mean. The estimator in \eqref{JMeq:SampleMean} is sensitive to outliers and not reliable if concentration fails \cite{catoni2012challenging,ke2019user}. Furthermore, in applications the ambient dimension can easily exceed the number of accessible samples such that even if concentration may be assumed, the estimate in \eqref{JMeq:Estimation_Gauss} is void. 


\subsection{Outline and Notation}

In Section \ref{JMsec:MassiveMIMO} we detail Massive MIMO as one specific modern application of covariance estimation and present recent approaches from an engineering perspective. The Massive MIMO setting originates from wireless communications research and will serve as a motivation for investigating multiple structures and quantized samples in a mathematical framework. Section \ref{JMsec:StructuredCE} then surveys recent theoretical advances on estimation of structured covariance matrices and Section \ref{JMsec:OneBitCE} shows the impact of coarse sample quantization on estimation guarantees. 

We denote $[n] = \{ 1,...,n \}$. For any absolute constant $C > 0$, we abbreviate $a \le Cb$ (resp.\ $\ge$) as $a \lesssim b$ (resp.\ $\gtrsim$). Whenever we use absolute constants $c,C > 0$, their values may vary from line to line. Scalar-valued functions act component-wise on vectors and matrices. For a set $S$ the \emph{indicator function} $\chi_S$ is $1$ on $S$ and $0$ on its complement $S^c$. We denote the one-matrix by $\boldsymbol{1} \in \R^{p\times p}$ and the identity by $\mathbf I \in \R^{p\times p}$. In particular,
\begin{align*}{}
	[\mathrm{sign}(\mathbf{x})]_i = \begin{cases}
		1 & \text{if } x_i\geq 0 \\
		-1& \text{if } x_i<0,
	\end{cases}
\end{align*}
for all $\mathbf x \in \R^p$ and $i\in [p]$. For $\mathbf Z \in \R^{p\times p}$, we denote the \emph{operator norm} (maximum singular value) by $\| \mathbf Z \| = \sup_{\mathbf u \in \mathbb{S}^{p-1}} \| \mathbf Z\mathbf u \|_2$, the \emph{nuclear norm} (sum of singular values) by $\| \mathbf Z \|_* = \mathrm{tr}(\sqrt{\mathbf Z^\top \mathbf Z})$, the \emph{Frobenius norm} (trace norm) by $\| \mathbf Z \|_F^2 = \mathrm{tr} (\mathbf Z^\top \mathbf Z) = \sum_{i,j = 1}^p Z_{i,j}^2$, the \emph{max norm} by $\| \mathbf Z \|_\infty = \max_{i,j} | Z_{i,j} |$, and the \emph{maximum column norm} $\| \mathbf Z \|_{1\rightarrow 2} = \max_{j \in [p]} \| \mathbf z_j \|_2$ where $\mathbf z_j$ denotes the $j$-th column of $\mathbf Z$. We use $\odot$ for the Hadamard (i.e., entry-wise) product of two matrices. The \emph{uniform distribution} on a set $S$ is denoted by $\mathrm{Unif}(S)$. The \emph{multivariate Gaussian distribution} with mean $\boldsymbol \mu \in \R^{p}$ and covariance matrix $\boldsymbol \Sigma \in \R^{p\times p}$ is denoted by $\mathcal{N}(\boldsymbol \mu, \boldsymbol \Sigma)$. The \emph{subgaussian ($\psi_2$-) and subexponential ($\psi_1$-) norms} of a random variable $X$ are defined by 
\begin{align*}
	\| X \|_{\psi_\alpha} = \inf \left\{ t>0 \colon \mathbb E \left[ \exp\left(\tfrac{|X|^\alpha}{t^\alpha} \right) \right] \le 2 \right\}
\end{align*}
A mean-zero random vector $\mathbf X$ on $\R^n$ is called \emph{$K$-subgaussian} if 
$$\|\langle \mathbf X,\mathbf x\rangle\|_{\psi_2} \leq K \; \mathbb E [\langle \mathbf X,\mathbf x \rangle^2 ]^{1/2} \quad \mbox{ for all }  \mathbf x \in \R^n.$$


\section{Motivation --- Massive MIMO} 
\label{JMsec:MassiveMIMO}

\emph{Multiple-Input-Multiple-Output (MIMO)} is a method in wireless communication to enhance the capacity of a radio link by using multiple transmission and multiple receiving antennas. It has become an essential element of wireless communication standards for Wi-Fi and mobile devices \cite{goldsmith2003capacity,paulraj2004overview}. \emph{Massive MIMO} equips the \emph{base station (BS)} with a large number of antennas to further increase bandwidth and potential number of users \cite{lu2014overview,marzetta2016fundamentals}.

\begin{figure}[t]
	\centering
	\includegraphics[width=7.2cm]{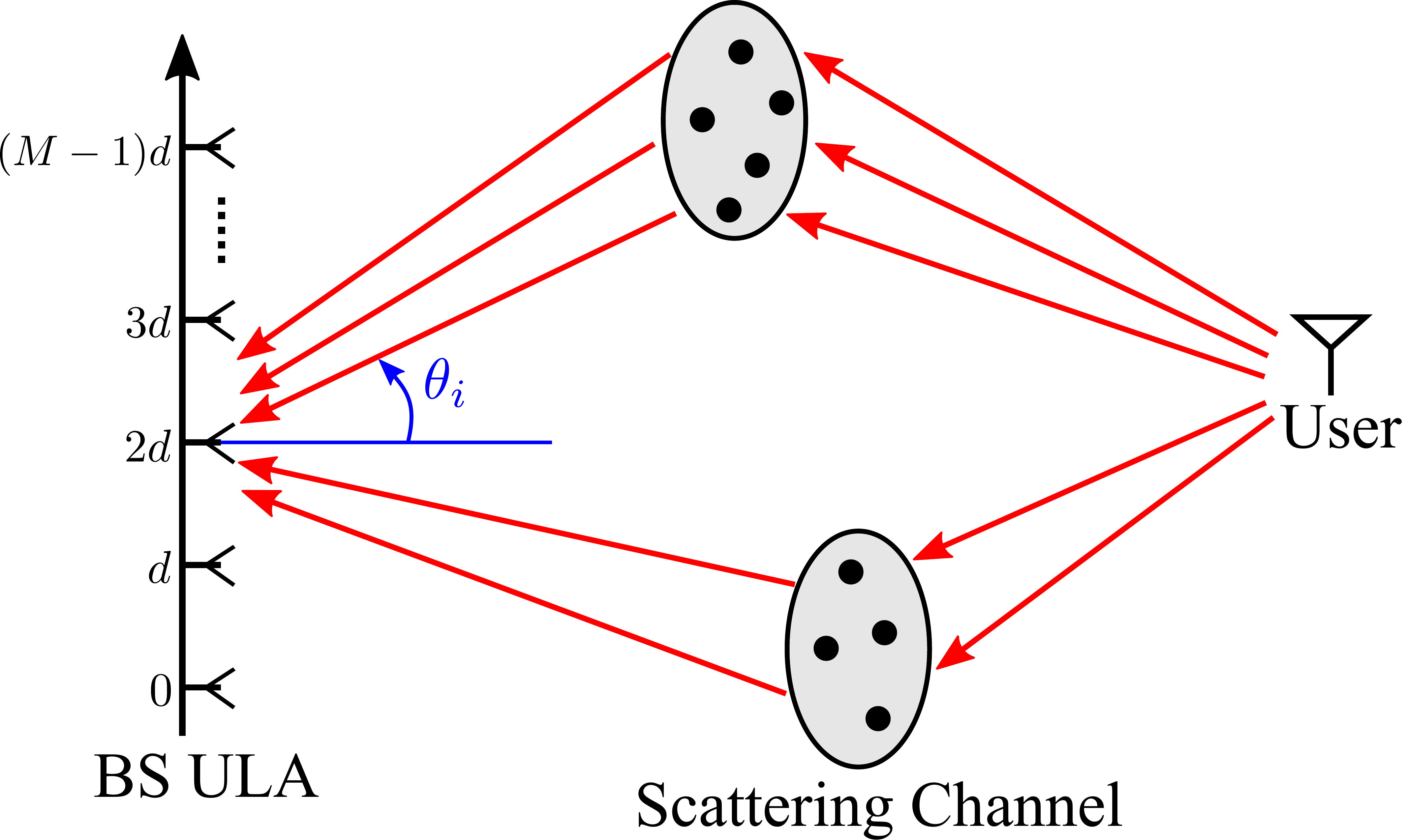}
	%
	%
	\caption{An exemplary multipath propagation channel, where the user signal is received at the BS through two scattering clusters.}
	\label{JMfig:MassiveMIMO}       
\end{figure}

We consider here a classical massive MIMO communication system, where the BS is equipped with a \emph{uniform linear array (ULA)} of $M$ antennas and communicates with multiple users through a scattering channel, e.g., wave reflection on buildings or objects. See Figure \ref{JMfig:MassiveMIMO} for an exemplary setup. During \emph{uplink (UL)} the BS receives user pilots and aims at estimating the respective channel covariance matrices, which characterize each transmission channel. By assuming mutual orthogonality of all UL pilots, it suffices to focus on a single user channel. We denote the corresponding UL channel vector at time-frequency resource $s$ by $\mathbf h(s) \in \mathbb C^M$ (standard block-fading model, e.g., \cite{tse2005fundamentals}). Furthermore, we assume that the user transmits a single pilot per channel coherence block such that the channel vectors $\mathbf h(s)$ are i.i.d., for $s \in [N]$ \cite{haghighatshoar2018low,haghighatshoar2016massive}. (To stay coherent with engineering literature, we use the therein common notation in this section. Note that our initial theoretical setting is retrieved by identifying the ambient dimension $p$ with the number of antennas $M$, the number of samples $n$ with the number of independent time-frequency resources $N$, and the sample vectors $\mathbf X^k$ with the channel vectors $\mathbf h(s)$). 

Under the above assumptions $\mathbf h(s)$ can be written as
\begin{align*}
	\mathbf h(s) = \int_{-1}^1 \rho(\xi,s) \, \mathbf a(\xi) \; \mathrm d \xi,
\end{align*}
for $s \in [N]$. Here, $\xi = \tfrac{\sin(\theta)}{\sin(\theta_{\text{max}})}$ are the normalized \emph{angles of arrival (AoA)} with $\theta_{\text{max}} \in [0,\tfrac{\pi}{2}]$ being the maximum array angular aperture, the vectors $\mathbf a(\xi) \in \mathbb C^M$ denote the respective array response at the BS antennas, and the \emph{channel gain} $\rho(\xi,s)$ is a complex Gaussian process with zero mean. By assuming the antenna spacing to be $d = \tfrac{\lambda}{2}$, where $\lambda = \tfrac{c_0}{f_0}$ denotes the wavelength with $c_0$ being the speed of light and $f_0$ the carrier frequency, we obtain that
\begin{align*}
	\mathbf a(\xi) = \begin{pmatrix}
		1, e^{j\pi \xi}, \dots, e^{j\pi (M-1) \xi}
	\end{pmatrix}^\top,
\end{align*}
where $j$ denotes the imaginary unit. With the additional assumption of \emph{wide sense stationary uncorrelated scattering (WSSUS)}, the second order statistics of the Gaussian process $\rho(\xi,s)$ is time invariant and uncorrelated across AoAs such that
\begin{align*}
	\mathbb E [ \rho(\xi,s) \rho^*(\xi',s) ] = \gamma (\xi) \, \delta (\xi - \xi'),
\end{align*}
where $\gamma \colon [-1,1] \to \R_{\ge 0}$ is the real and non-negative measure that represents the \emph{angular scattering function (ASF)} and $\delta$ is the Dirac delta function.

The received pilot signal at the BS at resource block $s$ is thus given as
\begin{align*}
	\mathbf y(s) = \mathbf h(s) x(s) + \mathbf z(s),
\end{align*}
for $s \in [N]$, where $x(s)$ is the pilot symbol and $\mathbf z(s) \sim \mathcal C \mathcal N (\boldsymbol 0, N_0 \mathbf I) = \mathcal N (\boldsymbol 0, \tfrac{N_0}{2} \mathbf I) + j \mathcal N (\boldsymbol 0, \tfrac{N_0}{2} \mathbf I)$ models \emph{additive white Gaussian noise (AWGN)}. Without loss of generality one may assume that the pilot symbols are normalized, i.e., $x(s) = 1$. The core problem of massive MIMO channel estimation is now to estimate the channel covariance matrix
\begin{align} \label{JMeq:SigmaH}
	\boldsymbol \Sigma_{\mathbf h}
	= \mathbb E [ \mathbf h(s) \mathbf h(s)^{\mathsf H} ]
	= \int_{-1}^1 \gamma (\xi) \, \mathbf a (\xi) \mathbf a (\xi)^{\mathsf H} \; \mathrm d \xi
\end{align}
from $N$ noisy samples $\mathbf y(s)$, $s\in [N]$. Since the number of samples $N$ is limited due to time constraints of the UL phase, one expects for massive MIMO that $M \approx N$, i.e., $p \approx n$. In light of \eqref{JMeq:Estimation_Gauss}, the sample covariance matrix will thus not provide a reliable estimate of $\boldsymbol \Sigma_{\mathbf h}$ in this case.

\paragraph{A hands-on approach.} In \cite{Khalilsarai2020structured} and related ongoing work, we use a more refined approach to estimate $\boldsymbol \Sigma_{\mathbf h}$. First note that by \eqref{JMeq:SigmaH} the channel covariance matrix belongs to the set
\begin{align*}
	\mathcal M = \left\{ \int_{-1}^1 \gamma(\xi) \, \mathbf a (\xi) \mathbf a (\xi)^{\mathsf H} \; \mathrm d \xi \colon \gamma \in \mathcal A \right\},
\end{align*} 
where $\mathcal A$ denotes the class of typical ASFs in wireless propagation. If one assumes sparse scattering propagation, the set $\mathcal A$ consists of sparse ASFs. In particular, we assume that $\gamma(\xi)$ can be decomposed as the sum of a discrete spike component $\gamma_d$ (modeling the power received from \emph{line of sight (LOS)} paths and narrow scatterers) and a continuous component $\gamma_c$ (modeling the power received from wide scatterers). Mathematically, we can write
\begin{align} \label{JMeq:GammaDecomp}
	\gamma (\xi) 
	= \gamma_d(\xi) + \gamma_c(\xi)
	= \sum_{k=1}^r c_k \delta(\xi - \xi_k) + \gamma_c(\xi),
\end{align}
where $\gamma_d$ consists of $r \ll M$ Dirac deltas with AoAs $\xi_1,\dots,\xi_r$ and  strengths $c_1,\dots,c_r > 0$ corresponding to $r$ specular propagation elements. Furthermore, by sparsity assumptions on $\gamma$ we have that $\mathrm{meas}(\gamma_c) \ll \mathrm{meas}([-1,1])$, where $\mathrm{meas}(\gamma_c)$ denotes here the measure of the support of $\gamma_c$. Combining \eqref{JMeq:SigmaH} and \eqref{JMeq:GammaDecomp}, we decompose the channel covariance matrix as
\begin{align} \label{JMeq:SigmaDecomp}
	\boldsymbol \Sigma_{\mathbf h} 
	= \boldsymbol \Sigma_{\mathbf h}^d + \boldsymbol \Sigma_{\mathbf h}^c
	= \sum_{k=1}^r c_k \, \mathbf a (\xi_k) \mathbf a (\xi_k)^{\mathsf H} + \int_{-1}^1 \gamma_c(\xi) \, \mathbf a (\xi) \mathbf a (\xi)^{\mathsf H} \; \mathrm d \xi,
\end{align}
where $\boldsymbol \Sigma_{\mathbf h}^d$ is rank-$r$ and positive semi-definite and $\boldsymbol \Sigma_{\mathbf h}^c$ is full rank and positive semi-definite with few dominant singular values. We can approximate $\boldsymbol \Sigma_{\mathbf h}$ now in three consecutive steps:

\begin{enumerate}
	\item[(i)] \textit{Spike Location Estimation for $\gamma_d$:} Applying the \emph{MUltiple SIgnal Classification (MUSIC)} algorithm \cite{stoica2005spectral} we estimate the AoAs $\xi_k$ of the spike component $\gamma_d$ from the noisy samples $\mathbf y(1),\dots,\mathbf y(N)$, cf.\ \cite[Theorem 1]{Khalilsarai2020structured}. Since this step is fairly standard we do not discuss the details here but refer the interested reader to \cite{Khalilsarai2020structured}. Let us only mention that the number of spikes is estimated by the number of dominant eigenvalues of $\boldsymbol \Sigma_{\mathbf y} := \mathbb E [ \mathbf y(s) \mathbf y(s)^{\mathsf H} ]$ (where one can naturally assume a corresponding gap in the spectrum since the power received via LOS paths in $\gamma_d$ dominates the power received from wide scatterers in $\gamma_c$).
	As a result, we obtain estimated spike locations $\hat\xi_k$, for $k \in [\hat r]$, and define an approximation of $\gamma_d$
	\begin{align*}
		\tilde\gamma_d(\xi) = \sum_{k=1}^{\hat r} \tilde c_k \, \delta(\xi - \hat\xi_k),
	\end{align*}
    where the coefficients $\tilde c_1,\dots, \tilde c_{\hat r} \ge 0$ still need to be estimated.
	\item[(ii)] \textit{Sparse Dictionary-Based Method:} We approximate the continuous component $\gamma_c$ over a finite dictionary of densities $\mathcal G_c := \{ \psi_i \colon [-1,1] \to \R, \; i \in [G] \}$ that are suitably chosen, e.g., Gaussian, Laplacian, or rectangular kernels, cf.\ Figure \ref{JMfig:Dictionary}. We hence define
	\begin{align*}
		\tilde\gamma_c(\xi) = \sum_{i=1}^{G} \tilde b_i \psi_i(\xi),
	\end{align*}
    where only the coefficients $\tilde b_1,\dots, \tilde b_G \ge 0$ need to be estimated.
    \item[(iii)] \textit{Non-Negative Least Square (NNLS) estimator:} Collecting the coefficients in a single vector $\mathbf u = (\tilde b_1,\dots,\tilde b_G,\tilde c_1,\dots, \tilde c_{\hat r})^\top \in \R_{\ge 0}^{G + \hat r}$ and recalling \eqref{JMeq:SigmaDecomp}, we define our coefficient dependent estimate of the channel covariance 
    \begin{align} \label{JMeq:Reduction}
    	\boldsymbol \Sigma_{\mathbf h} (\mathbf u)
    	= \sum_{k=1}^{\hat r} \tilde c_k \, \mathbf a (\hat\xi_k) \mathbf a (\hat\xi_k)^{\mathsf H} + \sum_{i=1}^G \tilde b_i \int_{-1}^1 \psi_i(\xi) \, \mathbf a (\xi) \mathbf a (\xi)^{\mathsf H} \; \mathrm d \xi
    	=: \sum_{i=1}^{G+\hat r} u_i \mathbf S_i,
    \end{align}
    where 
    \begin{align*}
    	\mathbf S_i = \begin{cases}
    		\int_{-1}^1 \psi_i(\xi) \, \mathbf a (\xi) \mathbf a (\xi)^{\mathsf H} \; \mathrm d \xi & \text{if } 1 \le i \le G \\
    		\mathbf a (\hat\xi_k) \mathbf a (\hat\xi_k)^{\mathsf H} & \text{if } G < i \le G + \hat r.
    	\end{cases}
    \end{align*}
    All that remains is to determine the coefficient vector $\mathbf u$. Since $\boldsymbol \Sigma_{\mathbf y} = \boldsymbol \Sigma_{\mathbf h}  + N_0 \mathbf I$, we can do so by fitting \eqref{JMeq:Reduction} to the sample covariance matrix $\hat {\boldsymbol \Sigma}_{\mathbf y}$ of $\mathbf y(1),\dots,\mathbf y(N)$, i.e.,
    \begin{align} \label{JMeq:NNLSgeneral}
    	\mathbf u^* 
    	= \mathrm{arg}\min_{\mathbf u \ge \boldsymbol 0} \Big\| \hat {\boldsymbol \Sigma}_{\mathbf y} - \sum_{i=1}^{G+\hat r} u_i \mathbf S_i - N_0 \mathbf I \Big\|_F^2.
    \end{align}
    Since $\boldsymbol \Sigma_{\mathbf h}$ is Hermitian Toeplitz, one can incorporate the structure in \eqref{JMeq:NNLSgeneral} by replacing $\hat{ \boldsymbol \Sigma }_{\mathbf h} = \hat{\boldsymbol \Sigma}_{\mathbf y} - N_0 \mathbf I$ with its projection $\tilde{ \boldsymbol \Sigma }_{\mathbf h}$ onto the space of Hermitian Toeplitz matrices (which can be done by averaging the diagonals, cf.\ Section \ref{JMsec:Toeplitz}). Denoting the first column of $\tilde{ \boldsymbol \Sigma }_{\mathbf h}$ by $\tilde{ \boldsymbol \sigma } \in \C^M$ and collecting the first columns of the matrices $\mathbf S_i$ in a matrix $\tilde{ \mathbf S } \in \C^{M \times (G+\hat r)}$, we may instead solve
    \begin{align} \label{JMeq:NNLS}
        \mathbf u^* 
        = \mathrm{arg}\min_{\mathbf u \ge \boldsymbol 0} \Big\| \mathbf W ( \tilde{\mathbf S} \mathbf u  - \tilde{ \boldsymbol \sigma }) \Big\|_F^2,
    \end{align}
    where $\mathbf W = \mathrm{diag} \big( (\sqrt{M}, \sqrt{2(M-1)},\sqrt{2(M-2)},...,\sqrt{2})^\top \big)$ is a weight matrix compensating the averaging process.
\end{enumerate}

\begin{figure}[t]
	\centering
	\includegraphics[width=7.2cm]{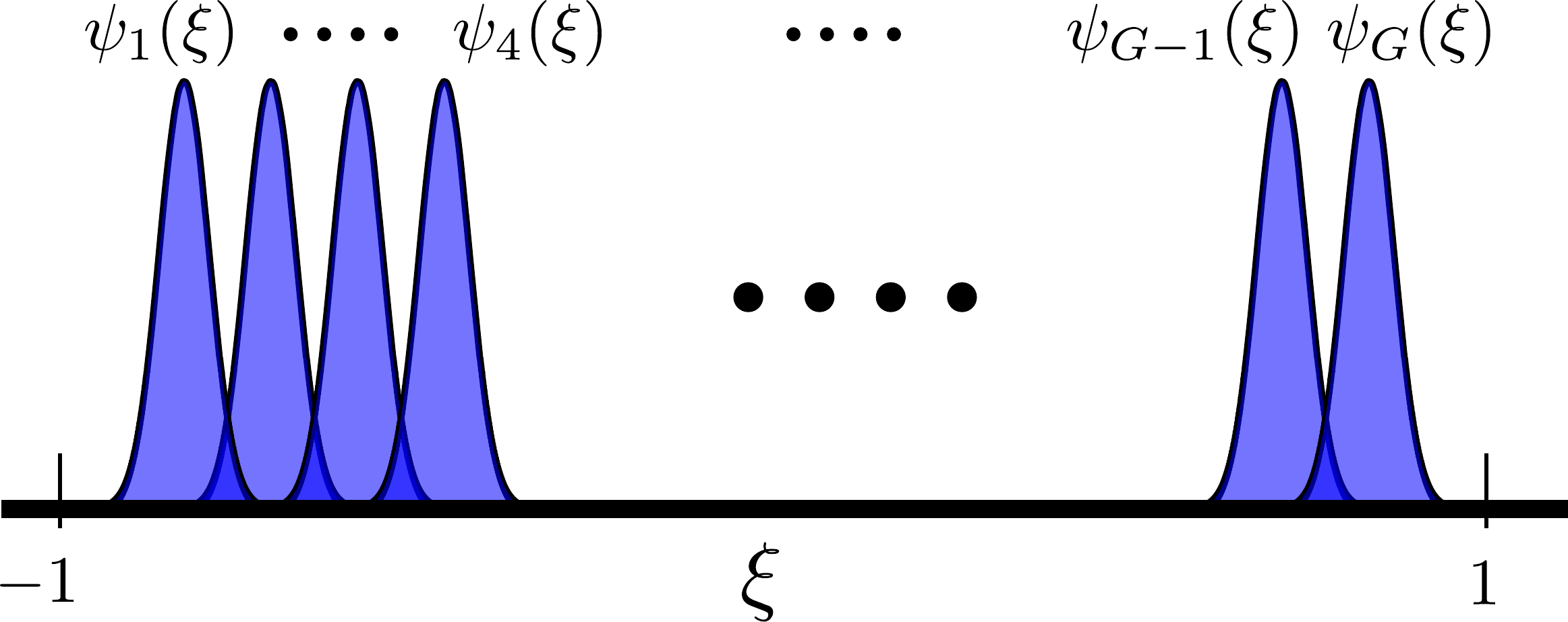}
	%
	%
	\caption{Example of a Gaussian dictionary that might be used to express $\gamma_c$.}
	\label{JMfig:Dictionary}       
\end{figure}

\paragraph{A hands-on approach --- Empirical evaluation.} Let us empirically compare the NNLS estimator to the sample covariance matrix right away. We consider a ULA with $M=128$ antennas, where the spacing between two consecutive antenna elements is set to $d = \frac{\lambda}{2}$. We produce random ASFs in the following general format:
\begin{equation}
\begin{aligned}
\gamma (\xi) &= \gamma_d (\xi) + \gamma_c (\xi) \\& = \frac{\alpha}{r}   \sum^r_{i=1}\delta (\xi - \xi_i) 
+ \frac{1-\alpha}{Z}\left( \sum^{n_r}_{j=1}\tt{rect}_{\mu_j,\sigma_j} (\xi) + \sum^{n_g}_{k=1} \tt{Gaussian}_{\mu_k,\sigma_k} (\xi) \right),
\end{aligned}
\end{equation}
where $r := 2, n_r := 2$ and $n_g := 2$ are set as the number of delta, rectangular and Gaussian functions, respectively. The spike locations are chosen uniformly at random from $[-1,1]$, i.e., $\xi_i \sim \mathrm{Unif}([-1,1])$ for $i \in [2]$. The rectangular functions are defined as
\begin{align*}
	\tt{rect}_{\mu_j,\sigma_j} (\xi) = \chi_{ \left[ \mu_j - \tfrac{\sigma_j}{2}, \mu_j + \tfrac{\sigma_j}{2} \right]} (\xi),
\end{align*}
where $\mu_1 \sim \mathrm{Unif}([-1,0])$, $\mu_2 \sim \mathrm{Unif}([0,1])$, and $\sigma_j \sim \mathrm{Unif}([0.1,0.3])$, for $j \in [2]$. The Gaussian functions $\tt{Gaussian}_{\mu_k,\sigma_k}$ are densities of $\mathcal N(\mu_k,\sigma_k)$, where $\mu_k \sim \mathrm{Unif}([-0.7,0.7])$ and $\sigma_k \sim \mathrm{Unif}([0.03,0.04])$, for $k \in [2]$. Moreover, $\alpha := 0.5$ is set to present the power contribution of discrete spikes. The constant $Z = \int_{-1}^1 \gamma_c(\xi) d\xi$ normalizes $\gamma_c$ in measure. The SNR is set to $10$ dB.

In addition to the sample covariance, we compare our NNLS estimator to sparse iterative covariance-based estimation (SPICE) \cite{stoica2011spice}. This method also exploits the ASF domain to minimize a covariance matrix fitting. Note that SPICE can only be applied with Dirac delta dictionaries and that it does not include a step of spike support detection as our method.

Denoting a generic covariance estimate as $\bar{\boldsymbol{\Sigma}}$, we consider two metrics to evaluate the estimation quality. The first metric, namely \textit{normalized Frobenius-norm error}, is defined as $E_{\text{NF}} =  \frac{\Vert \boldsymbol{\Sigma}_{\mathbf{h}} -  \bar{\boldsymbol{\Sigma}} \Vert_{\sf{F}}}{\Vert \boldsymbol{\Sigma}_{\mathbf{h}} \Vert_{\sf{F}}}$. Another metric, namely \textit{power efficiency}, evaluates the similarity of dominant subspaces between the estimated and true matrices, which is an important factor in various applications of massive MIMO such as user grouping and group-based beamforming.  Specifically, let $d\in[M]$ denote a subspace dimension parameter and let  $\mathbf{U}_d \in \mathbb{C}^{M\times d}$ and $\bar{\mathbf{U}}_d\in \mathbb{C}^{M\times d}$ be the $d$ dominant eigenvectors of $\boldsymbol{\Sigma}_{\mathbf{h}}$ and $\bar{\boldsymbol{\Sigma}}$ corresponding to their largest $d$ eigenvalues, respectively. Then, the power efficiency based on $d$ is defined as $E_{\text{PE}}(d) =  1 - \frac{\langle\boldsymbol{\Sigma}_{\mathbf{h}},\bar{\mathbf{U}}_d\bar{\mathbf{U}}_d^{\sf{H}}\rangle}{\langle\boldsymbol{\Sigma}_{\mathbf{h}},\mathbf{U}_d\mathbf{U}_d^{\sf{H}}\rangle}$. Note that $E_{\text{PE}}(d) \in [0,1]$ where a value closer to 0 means that more power is captured by the estimated  $d$-dominant subspace.  

SPICE and the proposed NNLS  estimators are  applied with $G=2M$ Dirac delta  dictionaries for the continuous part $\mathcal{G}_c$. The resulting Frobenius-norm error and power efficiency are depicted in Figure~\ref{JMfig:Simulation}. All results are averaged over 20 random ASFs and 200 random channel realizations for each ASF. The proposed NNLS method outperforms the sample covariance matrix and SPICE for both metrics. Finally, one can observe a similar outcome for smaller sample sizes as well, e.g., $N/M=0.125$, which occur naturally in massive MIMO.

\begin{figure}[t]
	\centering
	\includegraphics[width=5.9cm]{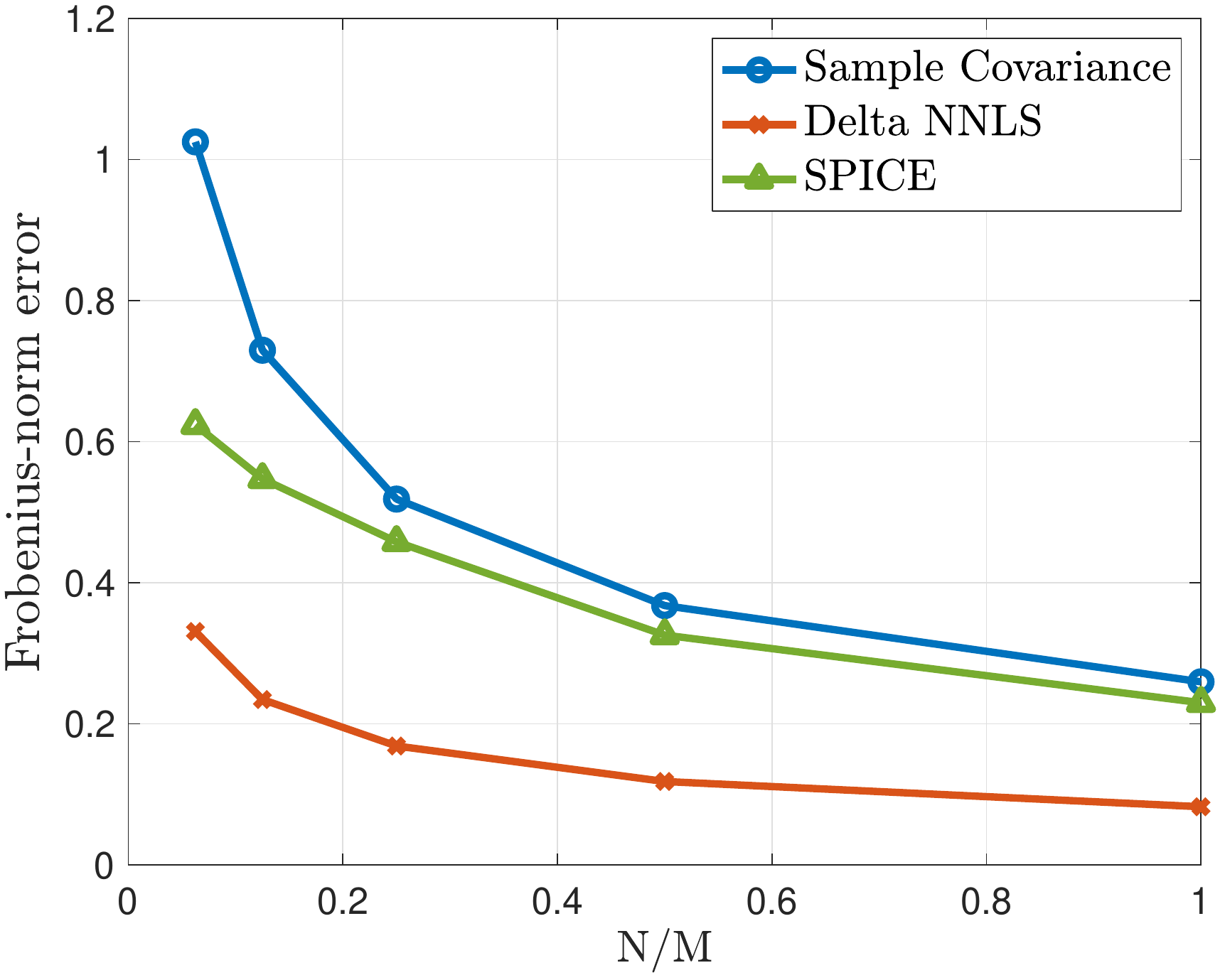}
	\;
	\includegraphics[width=5.9cm]{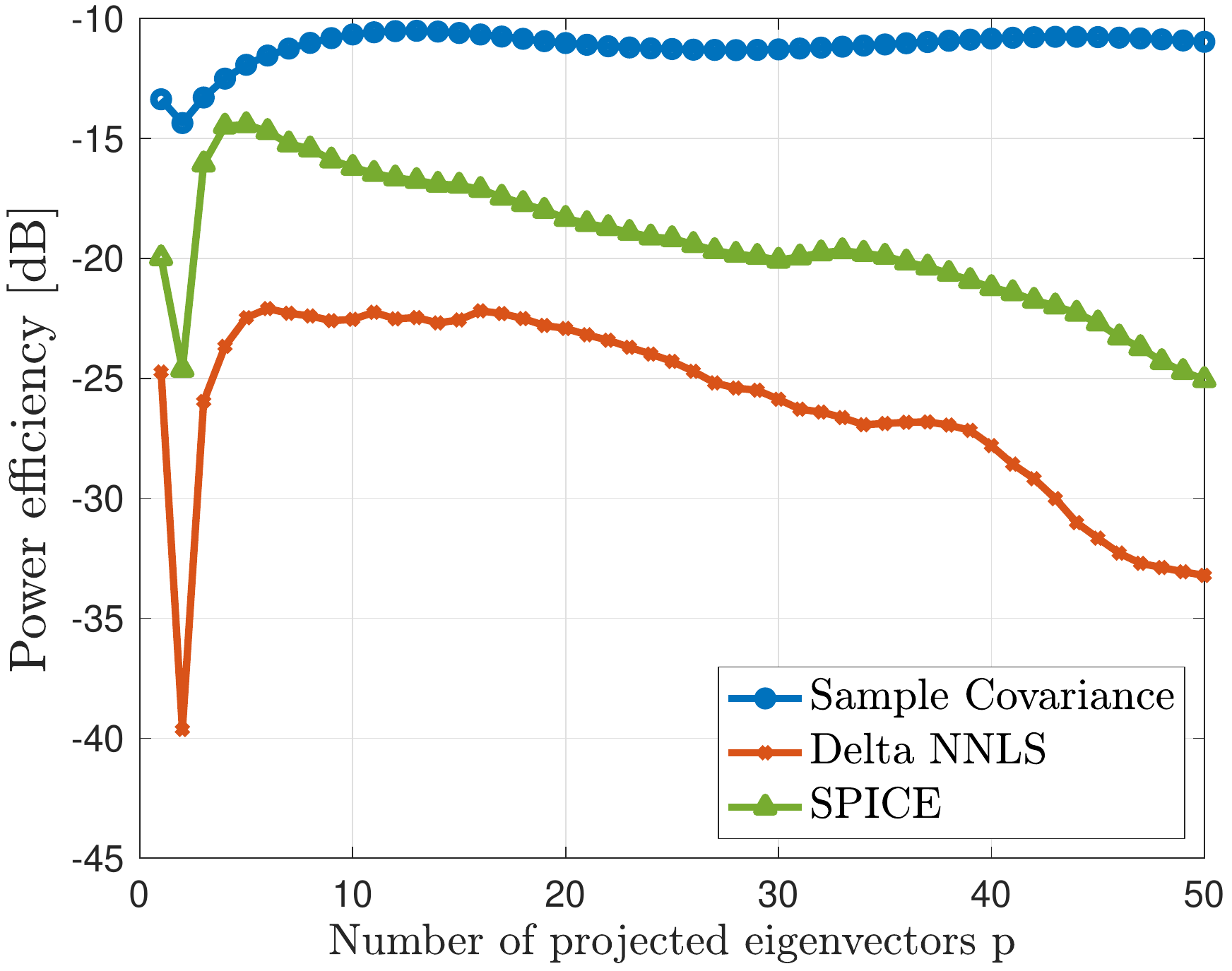}
	%
	%
	\caption{Frobenius-norm error (left) and power efficiency with $\tfrac{N}{M} = 0.5$ (right).}
	\label{JMfig:Simulation}       
\end{figure}
%

\paragraph{Structure and quantization.} Let us end this motivational section by highlighting two crucial points. First, whereas engineers are successful in boosting the sample covariance matrix by using special features of their problem setting, it might simplify existing approaches if alternatives to the sample covariance matrix are used that automatically leverage intrinsic structure(s) of the covariance matrix. As Section \ref{JMsec:StructuredCE} will show, the last decade substantially improved our theoretical understanding in this regard. Second, if the above methods are used in real applications, one has to take into account that the sample vectors $\mathbf y(s)$ have to be quantized to finite alphabets before digital processing. Especially in massive MIMO the information loss due to quantization can be significant since fine quantization at a multitude of antennas leads to enormous energy consumption. The results presented in Section \ref{JMsec:OneBitCE} can be seen as a first theoretical step into  understanding the non-asymptotic behavior of covariance estimators under coarse quantization of the samples. Since we concentrate on memoryless quantization schemes (each vector entry is quantized independently of all others), our model should be applicable to massive MIMO in a straight-forward way.


\section{Estimation of structured covariance matrices and robustness against outliers}
\label{JMsec:StructuredCE}

As we already have seen in Section \ref{JMsec:MassiveMIMO}, there are several structures of interest that $\boldsymbol \Sigma$ might exhibit in applications. We concentrate here on three important instances --- sparsity, low-rankness, and Toeplitz-structure --- that naturally emerge in engineering, biology, and data science, e.g.,  \cite{romero2016compressive,Liu13167}. Parts of the results we review below are not restricted to Gaussian random vectors but allow to treat heavy-tailed distributions that only satisfy assumptions on their lower moments. Techniques for robust covariance estimation include median-of-means \cite{nemirovskij1983problem,jerrum1986random}, element- and spectrum-wise truncation \cite{catoni2012challenging,minsker2018sub}, and $M$-estimators \cite{minsker2018sub,minsker2020robust}. The recent work \cite{mendelson2020robust} even constructs a "sub-Gaussian" estimator that only requires a finite kurtosis assumption ($L_4$-$L_2$-norm equivalence). In this context, sub-Gaussian means that the estimator performs as well as the sample covariance matrix applied to Gaussian distributions, for further discussion see \cite{mendelson2020robust}. Although the proposed construction is computationally intractable, it illustrates the potential of robust estimation. For further information on early and recent approaches to robust covariance estimation, we refer the reader to \cite{hubert2008high,ke2019user}.


\subsection{Sparse covariance matrices} 
\label{JMsec:SparseCE}

We begin with the assumption that $\boldsymbol \Sigma$ is a \emph{sparse} matrix, i.e., only few entries of $\boldsymbol \Sigma$ are relevant and hence non-zero. If $\mathbf X$ models ordered variables, the non-zero entries of $\boldsymbol \Sigma$, for instance, might cluster around the diagonal such that $\boldsymbol \Sigma$ is a banded or tapered matrix. A straight-forward way to estimate such covariance matrices is to band/taper the sample covariance matrix $\hat{\boldsymbol \Sigma}_n$ \cite{bickel2008regularized,furrer2007estimation,cai2010optimal}. If the variables are not ordered and the non-zero entries of $\boldsymbol \Sigma$ do not cluster, thresholding of $\hat{\boldsymbol \Sigma}_n$ is a viable alternative \cite{bickel2008covariance,el2008operator}. As remarked in \cite{levina2012partial}, the just named approaches can be treated in a unified way by introducing a \emph{mask} $\mathbf M \in [0,1]^{p\times p}$ and considering the \emph{masked sample covariance matrix} $\mathbf M \odot \hat{\boldsymbol \Sigma}_n$. The masked formulation allows to decompose the estimation error
\begin{align*}
	\| \mathbf M \odot \hat{\boldsymbol \Sigma }_n - \boldsymbol \Sigma \| 
	\le \| \mathbf M \odot \hat{\boldsymbol \Sigma }_n - \mathbf M \odot \boldsymbol \Sigma \| + \| \mathbf M \odot \boldsymbol \Sigma - \boldsymbol \Sigma \|
\end{align*}
into a variance term that behaves well if $\mathbf M$ is (close to) sparse and a bias term that is small whenever $\mathbf M$ encodes the support of $\boldsymbol \Sigma$. The bias term is deterministic and solely depends on a proper choice of $\mathbf M$. For understanding the influence of sparsity on the required sample size it thus suffices to control the variance term. The corresponding state-of-the-art result can be found in \cite{chen2012masked} which extends \cite{levina2012partial} from Gaussian distributions to general distributions of finite fourth moment and strengthens \cite{levina2012partial} if applied to Gaussian distributions. To facilitate the comparison with \eqref{JMeq:Estimation_Gauss}, we present the result only in the Gaussian case.

\begin{theorem}[{\cite[Theorem 1.1]{chen2012masked}}] \label{JMthm:Chen}
	Let $\mathbf M \in [0,1]^{p\times p}$, for $p \ge 3$, be fixed and $\mathbf X \sim \mathcal{N} (\boldsymbol 0, \boldsymbol \Sigma)$, for $\boldsymbol \Sigma \in \R^{p\times p}$. Then,
	\begin{align*}
		&\mathbb E \left[ \| \mathbf M \odot \hat{\boldsymbol \Sigma }_n - \mathbf M \odot \boldsymbol \Sigma \|^2 \right]^\frac{1}{2} \\
		&\lesssim \| \boldsymbol \Sigma \|
		\left( 
		\sqrt{ \frac{\| \boldsymbol \Sigma \|_\infty}{\| \boldsymbol \Sigma \|} \cdot \frac{\| \boldsymbol M \|_{1\rightarrow 2}^2 \log(p) }{ n } }
		+ \frac{\| \boldsymbol \Sigma \|_\infty}{\| \boldsymbol \Sigma \|} \cdot \frac{\| \boldsymbol M \| \log(p) \log(np) }{ n }.
		\right)
	\end{align*}
\end{theorem}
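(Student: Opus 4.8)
The plan is to write the masked error as a sum of i.i.d.\ centred random matrices,
\[
\mathbf{M}\odot\hat{\boldsymbol\Sigma}_n - \mathbf{M}\odot\boldsymbol\Sigma = \tfrac1n\textstyle\sum_{k=1}^n\mathbf{W}^k,\qquad \mathbf{W}^k := \mathbf{M}\odot\big(\mathbf{X}^k(\mathbf{X}^k)^\top - \boldsymbol\Sigma\big),
\]
and to apply a matrix Bernstein inequality, reading off the two terms of the bound as its variance and uniform contributions. (I would assume $\mathbf{M}$ symmetric, which covers the banding/tapering/thresholding masks; otherwise $\|\mathbf{M}\|_{1\rightarrow2}$ below gets replaced by the maximum of the row and column norms.) Since the entries of $\mathbf{W}^k$ are subexponential rather than bounded, I would first truncate at level $\tau^2\asymp\|\boldsymbol\Sigma\|_\infty\log(np)$: set $\widetilde{\mathbf{W}}^k = \mathbf{W}^k\chi_{\{\|\mathbf{X}^k\|_\infty\le\tau\}}$ and $\widetilde{\mathbf{W}}^k_\circ = \widetilde{\mathbf{W}}^k - \expect\widetilde{\mathbf{W}}^k$. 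A Gaussian tail bound gives $\P(\|\mathbf{X}^k\|_\infty>\tau)\lesssim(np)^{-c}$ for $c$ large, so a union bound shows all samples survive the truncation outside an event of probability $(np)^{-c+1}$; the removed mass $\sum_k(\mathbf{W}^k-\widetilde{\mathbf{W}}^k)$ and the recentering $\sum_k\expect\widetilde{\mathbf{W}}^k = -\sum_k\expect[\mathbf{W}^k\chi_{\{\|\mathbf{X}^k\|_\infty>\tau\}}]$ then contribute only a lower-order term to $\expect\|\cdot\|^2$, controlled by Cauchy--Schwarz and polynomial moment bounds on $\|\mathbf{X}^k\|_\infty$.

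For the matrix Bernstein input on $\tfrac1n\sum_k\widetilde{\mathbf{W}}^k_\circ$ I would argue as follows. The \emph{uniform bound} uses the Schur-multiplier estimate $\|\mathbf{P}\odot\mathbf{A}\|\le(\max_i P_{ii})\|\mathbf{A}\|$ valid for $\mathbf{P}\succeq0$: since $\mathbf{X}^k(\mathbf{X}^k)^\top$ and $\boldsymbol\Sigma$ are PSD, on the truncation event $\|\tfrac1n\widetilde{\mathbf{W}}^k\|\le\tfrac1n(\|\mathbf{X}^k\|_\infty^2+\|\boldsymbol\Sigma\|_\infty)\|\mathbf{M}\|\lesssim\tfrac1n\|\mathbf{M}\|\,\|\boldsymbol\Sigma\|_\infty\log(np)=:L$. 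For the \emph{variance parameter} I would expand the second moment by the Gaussian (Wick) fourth-moment identity; the cross terms cancel and one is left with
\[
\expect[\mathbf{W}^k(\mathbf{W}^k)^\top] = (\mathbf{M}\odot\boldsymbol\Sigma)(\mathbf{M}\odot\boldsymbol\Sigma)^\top + \boldsymbol\Sigma\odot\big(\mathbf{M}\,\mathrm{diag}(\boldsymbol\Sigma)\,\mathbf{M}^\top\big),
\]
both summands PSD. The second one I bound by the Schur estimate applied with the PSD factor $\mathbf{M}\,\mathrm{diag}(\boldsymbol\Sigma)\,\mathbf{M}^\top$, whose $i$-th diagonal entry is $\sum_j M_{ij}^2\Sigma_{jj}\le\|\boldsymbol\Sigma\|_\infty\|\mathbf{M}\|_{1\rightarrow2}^2$, giving norm $\le\|\boldsymbol\Sigma\|\,\|\boldsymbol\Sigma\|_\infty\,\|\mathbf{M}\|_{1\rightarrow2}^2$. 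The first one I bound by the one-line Cauchy--Schwarz Hadamard estimate $\|\mathbf{A}\odot\mathbf{B}\|\le\|\mathbf{A}^\top\|_{1\rightarrow2}\|\mathbf{B}\|_{1\rightarrow2}$ combined with $\|\boldsymbol\Sigma\|_{1\rightarrow2}^2 = \max_j\|\boldsymbol\Sigma\mathbf{e}_j\|_2^2\le\|\boldsymbol\Sigma\|\max_j\Sigma_{jj}\le\|\boldsymbol\Sigma\|\,\|\boldsymbol\Sigma\|_\infty$, which yields $\|\mathbf{M}\odot\boldsymbol\Sigma\|^2\le\|\mathbf{M}\|_{1\rightarrow2}^2\,\|\boldsymbol\Sigma\|\,\|\boldsymbol\Sigma\|_\infty$ as well. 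Hence the Bernstein variance parameter is $v = \big\|\tfrac1{n^2}\sum_k\expect[\widetilde{\mathbf{W}}^k_\circ(\widetilde{\mathbf{W}}^k_\circ)^\top]\big\|\lesssim\tfrac1n\|\mathbf{M}\|_{1\rightarrow2}^2\,\|\boldsymbol\Sigma\|\,\|\boldsymbol\Sigma\|_\infty$ (truncation and recentering only shrink it up to constants).

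Matrix Bernstein then gives $\P(\|\tfrac1n\sum_k\widetilde{\mathbf{W}}^k_\circ\|\ge t)\le2p\exp(-ct^2/v)\vee2p\exp(-ct/L)$, and integrating in $t$ yields $\expect\|\tfrac1n\sum_k\widetilde{\mathbf{W}}^k_\circ\|^2\lesssim v\log p + L^2\log^2 p$, i.e.
\[
\Big(\expect\big\|\tfrac1n\textstyle\sum_k\widetilde{\mathbf{W}}^k_\circ\big\|^2\Big)^{1/2}\lesssim \sqrt{\frac{\|\mathbf{M}\|_{1\rightarrow2}^2\,\|\boldsymbol\Sigma\|\,\|\boldsymbol\Sigma\|_\infty\log p}{n}} + \frac{\|\mathbf{M}\|\,\|\boldsymbol\Sigma\|_\infty\log(p)\log(np)}{n}.
\]
Writing $\|\boldsymbol\Sigma\|_\infty = \|\boldsymbol\Sigma\|\cdot\tfrac{\|\boldsymbol\Sigma\|_\infty}{\|\boldsymbol\Sigma\|}$ and pulling out $\|\boldsymbol\Sigma\|$, this is exactly the asserted bound; adding the lower-order truncation terms from the first paragraph finishes the proof. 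The only genuinely delicate point — the one I would expect to take real care with — is the variance estimate, namely obtaining the dependence on the maximal column norm $\|\mathbf{M}\|_{1\rightarrow2}$ rather than on the much larger operator norm $\|\mathbf{M}\|$. This forces one to carry out the exact Wick expansion, so as to see the two structured PSD summands above instead of a crude product bound, and then to apply the correct Schur-multiplier inequality to each (positive semidefiniteness of $\boldsymbol\Sigma$ for one; the Cauchy--Schwarz Hadamard bound together with $\|\boldsymbol\Sigma\|_{1\rightarrow2}^2\le\|\boldsymbol\Sigma\|\,\|\boldsymbol\Sigma\|_\infty$ for the other). The truncation bookkeeping that produces the $\log(np)$ factor in the second term (rather than $\log p$ or $\log^2(np)$) is routine but must track the tail probabilities carefully.
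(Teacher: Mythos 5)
The survey does not reprove this statement; it cites Chen, Gittens and Tropp (2012), so the comparison is with that reference. Your derivation is correct, but the concentration engine differs from theirs. Chen--Gittens--Tropp do not truncate: they feed the unbounded summands $\mathbf W^k=\mathbf M\odot(\mathbf X^k(\mathbf X^k)^\top-\boldsymbol\Sigma)$ directly into a matrix Rosenthal--Pinelis (moment) inequality for sums of independent random matrices, and then control the resulting $\bigl(\mathbb E\max_k\|\mathbf W^k\|^{2q}\bigr)^{1/(2q)}$ term via the sub-exponential tail of $\|\mathbf X^k\|_\infty^2$, which is where the $\log(np)$ factor enters. Your route --- truncate at $\tau^2\asymp\|\boldsymbol\Sigma\|_\infty\log(np)$, apply the bounded-summand matrix Bernstein inequality, integrate the tail, and sweep the truncation error into a lower-order remainder by Cauchy--Schwarz --- arrives at the same two-term bound and is arguably more elementary (matrix Bernstein is the more familiar tool), at the cost of the truncation and recentering bookkeeping you acknowledge. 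The genuinely essential content is shared by both arguments: the exact Wick expansion $\mathbb E[\mathbf W^k(\mathbf W^k)^\top]=(\mathbf M\odot\boldsymbol\Sigma)(\mathbf M\odot\boldsymbol\Sigma)^\top+\boldsymbol\Sigma\odot(\mathbf M\,\mathrm{diag}(\boldsymbol\Sigma)\,\mathbf M^\top)$, the PSD Schur-multiplier bound $\|\mathbf P\odot\mathbf A\|\le(\max_iP_{ii})\|\mathbf A\|$, and the estimate $\|\boldsymbol\Sigma\|_{1\to2}^2\le\|\boldsymbol\Sigma\|\,\|\boldsymbol\Sigma\|_\infty$ used to pull out $\|\mathbf M\|_{1\to2}^2$ rather than $\|\mathbf M\|^2$ in the variance parameter --- and you correctly single this out as the crux. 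One caveat worth making explicit: the theorem as printed does not state that $\mathbf M$ is symmetric, but both your argument and the original tacitly use it; for a non-symmetric mask one should either symmetrize by dilation or replace $\|\mathbf M\|_{1\to2}$ with $\max\{\|\mathbf M\|_{1\to2},\|\mathbf M^\top\|_{1\to2}\}$, as you note parenthetically.
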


Theorem \ref{JMthm:Chen} only bounds the second moment of the variance term, but Markov's inequality can be used to obtain according estimates that hold with high probability. Furthermore, the same proof techniques apply to higher moments of the variance term as well such that exponential tail bounds can be achieved for Gaussian $\mathbf{X}$, cf.\ \cite[Section 3.3]{chen2012masked}.\\
Let us compare Theorem \ref{JMthm:Chen} with \eqref{JMeq:Estimation_Gauss}. For general covariance estimation, i.e., $\mathbf M = \boldsymbol 1$, we have $\| \boldsymbol M \|_{1\rightarrow 2}^2 = \| \boldsymbol M \| = p$ which implies that up to $\log$-factors both results are of the same order $\mathcal{O}(\sqrt{\tfrac{p}{n}} + \tfrac{p}{n})$. If $\mathbf M$ encodes sparsity, however, meaning that only up to $s \ll p$ columns and rows are non-zero and $\| \boldsymbol M \|_{1\rightarrow 2}^2 = \| \boldsymbol M \| = s$, the estimation error is considerably reduced when applying Theorem \ref{JMthm:Chen}. A similar error reduction occurs if $\mathbf{M} \odot \hat{\boldsymbol \Sigma }_n$ is a banded estimator of bandwidth $B$.

\paragraph{Estimation via thresholding.} While the masked framework provides a unified understanding of the intrinsic complexity of sparse covariance estimation, in practice the mask $\mathbf{M}$ is unknown. A more realistic approach to the problem are hence thresholding procedures as, e.g., \cite{bickel2008covariance}. To allow for non-ordered covariance matrices, i.e., general sparsity and not only limited bandwidth of the matrix, the authors of \cite{bickel2008covariance} introduce the set of bounded and (effectively) sparse covariance matrices
\begin{align*}
	\mathcal{U} (q,s,M) := \left\{ \boldsymbol \Sigma \colon \Sigma_{i,i} \le M \text{ and } \sum_{j=1}^p |\Sigma_{i,j}|^q \le s, \text{ for all } i \in [p] \right\},
\end{align*}
for $q \in [0,1)$ and $s, M > 0$. If $q = 0$, the matrices in $\mathcal{U} (q,s,M)$ have at most $s$ non-zero entries per row; if $q > 0$, the rows are close to $s$-sparse vectors. To estimate $\boldsymbol \Sigma \in \mathcal{U} (q,s,M)$, the thresholded estimator $\mathbb T_\tau (\hat{\boldsymbol \Sigma }_n)$ is considered, where
\begin{align} \label{JMeq:ThresholdingOperator}
	[\mathbb T_\tau (\mathbf A)]_{i,j} = \begin{cases}
		A_{i,j} & \text{ if } |A_{i,j}| \ge \tau, \\
		0 & \text{ else,}
	\end{cases}
\end{align} 
for any $\tau > 0$ and $\mathbf A \in \R^{p\times p}$. 

\begin{theorem}[{\cite[Theorem 1]{bickel2008covariance}}] \label{JMthm:Bickel}
	Let $\mathbf X \sim \mathcal{N} (\boldsymbol 0, \boldsymbol \Sigma)$, for $\boldsymbol \Sigma \in \mathcal{U} (q,s,M)$, and $M' > 0$ be sufficiently large (depending on $M$). If 
	\begin{align*}
		\tau = M' \sqrt{\frac{\log(p)}{n}},
	\end{align*}
    for $n \gtrsim \log(p)$, then with probability at least $1 - e^{-cn\tau^2}$
    \begin{align*}
    	\| \mathbb T_\tau (\hat{\boldsymbol \Sigma }_n) - \boldsymbol \Sigma \|
    	= \mathcal O \left( s \left( \frac{\log(p)}{n} \right)^{\frac{1-q}{2}} \right).
    \end{align*}
\end{theorem}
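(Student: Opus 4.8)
The plan is to use the classical two-stage argument of Bickel and Levina: first establish a uniform entrywise deviation bound for the Gaussian sample covariance matrix, and then convert it into an operator-norm bound by exploiting the (effective) sparsity encoded in $\mathcal U(q,s,M)$ together with the symmetry of the thresholded estimator.

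\textbf{Step 1 (entrywise concentration).} Fix $i,j \in [p]$ and write $[\hat{\boldsymbol\Sigma}_n]_{i,j} - \Sigma_{i,j} = \tfrac1n \sum_{k=1}^n \big(X_i^k X_j^k - \Sigma_{i,j}\big)$, an average of $n$ i.i.d.\ centered variables. Each summand is (up to centering) a product of two jointly Gaussian variables, hence subexponential with $\|X_i^k X_j^k\|_{\psi_1} \lesssim \|X_i^k\|_{\psi_2}\|X_j^k\|_{\psi_2} \lesssim \sqrt{\Sigma_{i,i}\Sigma_{j,j}} \le M$, where the last bound uses the diagonal constraint in the definition of $\mathcal U(q,s,M)$ and is therefore uniform in $(i,j)$. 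Bernstein's inequality for subexponential sums then gives
\[
\mathbb P\big(|[\hat{\boldsymbol\Sigma}_n]_{i,j} - \Sigma_{i,j}| \ge t\big) \le 2\exp\big(-c\,n\min\{t^2/M^2,\,t/M\}\big)
\]
with an absolute constant $c$. Since $\tau \asymp \sqrt{\log(p)/n}$ and $n \gtrsim \log(p)$, the relevant value $t = \tau/3$ lies in the sub-Gaussian branch $t \lesssim M$, so the exponent reads $-c\,n t^2/M^2$. A union bound over the $p^2$ entries shows that, if $M'$ is chosen large enough in terms of $M$ (and $c$), the event
\[
\mathcal E := \Big\{ \max_{i,j\in[p]} |[\hat{\boldsymbol\Sigma}_n]_{i,j} - \Sigma_{i,j}| \le \tfrac{\tau}{3} \Big\}
\]
satisfies $\mathbb P(\mathcal E) \ge 1 - e^{-cn\tau^2}$.

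\textbf{Step 2 (from entries to operator norm).} We work on $\mathcal E$. As $\mathbb T_\tau(\hat{\boldsymbol\Sigma}_n) - \boldsymbol\Sigma$ is symmetric, its operator norm is at most its largest absolute row sum, so it suffices to bound $R_i := \sum_{j=1}^p |[\mathbb T_\tau(\hat{\boldsymbol\Sigma}_n)]_{i,j} - \Sigma_{i,j}|$ uniformly in $i$. Splitting according to whether the entry is kept ($|[\hat{\boldsymbol\Sigma}_n]_{i,j}| \ge \tau$) or zeroed,
\[
R_i \le \sum_{j:\,|[\hat{\boldsymbol\Sigma}_n]_{i,j}|\ge\tau} |[\hat{\boldsymbol\Sigma}_n]_{i,j} - \Sigma_{i,j}| \;+\; \sum_{j:\,|[\hat{\boldsymbol\Sigma}_n]_{i,j}|<\tau} |\Sigma_{i,j}|.
\]
In the first sum, each summand is $\le \tau/3$ on $\mathcal E$, while $|[\hat{\boldsymbol\Sigma}_n]_{i,j}|\ge\tau$ forces $|\Sigma_{i,j}|\ge 2\tau/3$; since $\sum_j|\Sigma_{i,j}|^q\le s$, there are at most $s(2\tau/3)^{-q}$ such indices, giving a contribution $\lesssim s\,\tau^{1-q}$. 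In the second sum, $|[\hat{\boldsymbol\Sigma}_n]_{i,j}|<\tau$ forces $|\Sigma_{i,j}|<4\tau/3$ on $\mathcal E$, hence $\sum_{j:\,|\Sigma_{i,j}|<4\tau/3}|\Sigma_{i,j}| \le (4\tau/3)^{1-q}\sum_j|\Sigma_{i,j}|^q \le (4\tau/3)^{1-q}\,s \lesssim s\,\tau^{1-q}$. Therefore $\|\mathbb T_\tau(\hat{\boldsymbol\Sigma}_n) - \boldsymbol\Sigma\| \le \max_i R_i \lesssim s\,\tau^{1-q} = \mathcal O\!\big(s(\log(p)/n)^{(1-q)/2}\big)$ on $\mathcal E$, which is the assertion.

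\textbf{Main obstacle.} Step 2 is purely deterministic bookkeeping (the only care needed is a consistent choice of the threshold fractions, so that ``kept'' entries are large in truth and ``discarded'' entries are small in truth); the real work is Step 1. There one must produce the entrywise Bernstein bound with constants that are genuinely uniform over $(i,j)$ and depend only on $M$ --- not on the unknown off-diagonal entries --- and must check that the scaling $\tau \asymp \sqrt{\log(p)/n}$ with $n \gtrsim \log(p)$ keeps $t=\tau/3$ in the sub-Gaussian regime of Bernstein, so that the $p^2$-fold union bound still leaves a failure probability of order $e^{-cn\tau^2}$ after absorbing $M'$ into the constant.
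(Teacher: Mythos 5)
Your proof is correct. The paper itself does not prove Theorem~\ref{JMthm:Bickel} (it is cited from Bickel--Levina), but the proof of the Toeplitz analog Theorem~\ref{JMthm:ToeplitzThresholding} is explicitly modeled on it and is the natural point of comparison. Your Step~1 plays the role of Lemma~\ref{JMlem:1} there, modulo the choice of concentration tool: you use Bernstein for sub-exponential Gaussian products $X_i^kX_j^k$ with $\|X_i^kX_j^k\|_{\psi_1}\lesssim M$ uniform over $(i,j)$ thanks to the diagonal bound in $\mathcal U(q,s,M)$, whereas the paper invokes a Hanson--Wright inequality for vectors with the convex concentration property; both yield the same exponential entrywise tail and $p^2$-fold union bound. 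The deterministic Step~2, however, uses a genuinely different decomposition. The paper first peels off the bias $\|\mathbb T_\tau(\boldsymbol\Sigma)-\boldsymbol\Sigma\|$ and then controls $\|\mathbb T_\tau(\hat{\boldsymbol\Sigma}_n)-\mathbb T_\tau(\boldsymbol\Sigma)\|$ via a case analysis on whether $\hat\Sigma_{i,j}$ and $\Sigma_{i,j}$ each lie above or below $\tau$ (its terms $(I)$--$(V)$), which forces an extra auxiliary parameter $\gamma\in(0,1)$ and the deviation count $N_i(1-\gamma)$ before the good event makes that count vanish. You instead split $[\mathbb T_\tau(\hat{\boldsymbol\Sigma}_n)]_{i,j}-\Sigma_{i,j}$ directly into the kept branch ($=\hat\Sigma_{i,j}-\Sigma_{i,j}$, small in magnitude on $\mathcal E$ and few in number by $\ell_q$-counting at level $2\tau/3$) and the discarded branch ($=-\Sigma_{i,j}$, necessarily of size $<4\tau/3$ on $\mathcal E$ and summable via the $\ell_q$-constraint). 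This reaches the same $s\,\tau^{1-q}$ bound with strictly fewer cases and without the $\gamma$-trick; it is the cleaner route, while the paper's version carries a bit more bookkeeping that it in turn needs for the Toeplitz banding generality.

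Two minor points worth making explicit if you write this up: (i) the operator-norm-by-row-sum step uses the symmetry of $\mathbb T_\tau(\hat{\boldsymbol\Sigma}_n)-\boldsymbol\Sigma$ (otherwise $\|A\|\le(\|A\|_{\infty\to\infty}\|A\|_{1\to 1})^{1/2}$ is the right inequality, which reduces to the row-sum bound only in the symmetric case); and (ii) the restriction $n\gtrsim\log(p)$ is what keeps $\tau/3$ in the sub-Gaussian branch of Bernstein, and the implicit constant there is permitted to depend on $M$ and $M'$, which you implicitly use but should state.
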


Theorem \ref{JMthm:Bickel} does not require knowledge on the support of $\boldsymbol \Sigma$ and respects sparsity defects. However, if we once more consider the case $q=0$, we see that the estimate in Theorem \ref{JMthm:Bickel} is sub-optimal since the error behaves (up to $\log$-factors) like $\mathcal{O} \big( \sqrt{\tfrac{s^2}{n}} \big)$ and not like $\mathcal{O} (\sqrt{\tfrac{s}{n}})$ as one would expect.


\subsection{Low-rank covariance matrices}

When working with high-dimensional random vectors, another commonly considered structural prior is to assume that the distribution concentrates around a low-dimensional manifold. This may manifest itself in $\boldsymbol \Sigma$ being a low rank matrix. Interestingly enough, the sample covariance matrix in \eqref{JMeq:SampleMean} intrinsically leverages low-rankness of $\boldsymbol \Sigma$. To understand this phenomenon, one needs the notion of effective rank. Let us define
\begin{align*}
	\mathbf r (\boldsymbol \Sigma) = \frac{\| \boldsymbol \Sigma \|_*}{\| \boldsymbol \Sigma \|}
\end{align*}
to be the \emph{effective rank} of $\boldsymbol \Sigma$. It is straight-forward to verify that $1 \le \mathbf r (\boldsymbol \Sigma) \le \mathrm{rank}(\boldsymbol \Sigma)$. In contrast to the rank of $\boldsymbol \Sigma$, the quantity $\mathbf r (\boldsymbol \Sigma)$ is small even if $\boldsymbol \Sigma$ is only close to a low-rank matrix, e.g., consider $\boldsymbol \Sigma$ to be a full rank matrix with exponentially decaying spectrum. 

\begin{theorem}[{\cite[Corollary 2]{koltchinskii2014concentration}}] \label{JMthm:Koltchinski}
	Let $\mathbf X \sim \mathcal N (\boldsymbol 0,\boldsymbol \Sigma)$, for $\boldsymbol \Sigma \in \R^{p\times p}$, and $n \gtrsim \mathbf r (\boldsymbol \Sigma)$. Then with probability at least $1 - e^{-t}$ the sample covariance matrix satisfies
	\begin{align*}
		\| \hat{ \boldsymbol \Sigma }_n - \boldsymbol \Sigma \| 
		\lesssim \| \boldsymbol \Sigma \| \left( \sqrt{\frac{\mathbf r ( \boldsymbol \Sigma )}{n}} + \frac{\mathbf r ( \boldsymbol \Sigma )}{n} + \sqrt{\frac{t}{n}} + \frac{t}{n} \right).
	\end{align*}
\end{theorem}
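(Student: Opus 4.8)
The plan is to whiten the samples, rewrite the operator-norm error as the supremum of a quadratic process over an ellipsoid, and then control that supremum with a Gaussian matrix deviation inequality. Writing $\mathbf X^k = \boldsymbol\Sigma^{1/2}\mathbf g^k$ with $\mathbf g^1,\dots,\mathbf g^n \sim \mathcal N(\boldsymbol 0,\mathbf I)$ i.i.d.\ and setting $\mathbf G = [\mathbf g^1 \mid \dots \mid \mathbf g^n] \in \R^{p\times n}$, the symmetry of $\hat{\boldsymbol\Sigma}_n - \boldsymbol\Sigma$ gives
\[
\|\hat{\boldsymbol\Sigma}_n - \boldsymbol\Sigma\| = \sup_{\mathbf u\in\mathbb S^{p-1}}\big|\langle(\hat{\boldsymbol\Sigma}_n-\boldsymbol\Sigma)\mathbf u,\mathbf u\rangle\big| = \sup_{\mathbf v\in E}\Big|\tfrac1n\|\mathbf G^\top\mathbf v\|_2^2 - \|\mathbf v\|_2^2\Big|,
\]
where $E := \boldsymbol\Sigma^{1/2}\,\mathbb S^{p-1}$ and $\mathbf v = \boldsymbol\Sigma^{1/2}\mathbf u$. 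The ellipsoid $E$ has Euclidean radius $\mathrm{rad}(E) = \|\boldsymbol\Sigma^{1/2}\| = \|\boldsymbol\Sigma\|^{1/2}$ and Gaussian width
\[
w(E) = \mathbb E\sup_{\mathbf v\in E}\langle\mathbf g,\mathbf v\rangle = \mathbb E\|\boldsymbol\Sigma^{1/2}\mathbf g\|_2 \le \big(\mathbb E\|\boldsymbol\Sigma^{1/2}\mathbf g\|_2^2\big)^{1/2} = (\mathrm{tr}\,\boldsymbol\Sigma)^{1/2} = \|\boldsymbol\Sigma\|^{1/2}\sqrt{\mathbf r(\boldsymbol\Sigma)} ,
\]
and it is precisely this Cauchy--Schwarz step that replaces the ambient dimension $p$ by the effective rank $\mathbf r(\boldsymbol\Sigma)$.

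The core ingredient is the Gaussian matrix deviation inequality: for $\mathbf G^\top\in\R^{n\times p}$ with i.i.d.\ $\mathcal N(0,1)$ entries and any bounded $T\subseteq\R^p$,
\[
\sup_{\mathbf v\in T}\big|\,\|\mathbf G^\top\mathbf v\|_2 - \sqrt n\,\|\mathbf v\|_2\,\big| \le C\big(w(T) + s\cdot\mathrm{rad}(T)\big) \qquad\text{with probability }\ge 1 - 2e^{-s^2}.
\]
I would either cite this or derive it from Gordon's Gaussian comparison inequalities applied to the process $(\mathbf v,\mathbf w)\mapsto\langle\mathbf G,\mathbf v\mathbf w^\top\rangle$ on $T\times\mathbb S^{n-1}$ (whose supremum over $\mathbf w$ is $\|\mathbf G^\top\mathbf v\|_2$), which yields the expectation bound $\mathbb E\sup_{\mathbf v\in T}|\,\|\mathbf G^\top\mathbf v\|_2 - \sqrt n\|\mathbf v\|_2\,| \lesssim w(T)$, followed by Gaussian concentration for the fluctuation --- the map $\mathbf G\mapsto\sup_{\mathbf v\in T}|\,\|\mathbf G^\top\mathbf v\|_2-\sqrt n\|\mathbf v\|_2\,|$ is $\mathrm{rad}(T)$-Lipschitz in the Frobenius norm, which produces the $s\cdot\mathrm{rad}(T)$ term.

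Granting this, apply it with $T = E$ and pass from $\|\mathbf G^\top\mathbf v\|_2$ to $\|\mathbf G^\top\mathbf v\|_2^2$ via $a^2-b^2=(a-b)(a+b)$ with $a=\|\mathbf G^\top\mathbf v\|_2$, $b=\sqrt n\|\mathbf v\|_2$ and $a+b\le 2\sqrt n\,\mathrm{rad}(E)+|a-b|$. Dividing by $n$ and substituting $\mathrm{rad}(E)=\|\boldsymbol\Sigma\|^{1/2}$, $w(E)\le\|\boldsymbol\Sigma\|^{1/2}\sqrt{\mathbf r(\boldsymbol\Sigma)}$ gives
\[
\|\hat{\boldsymbol\Sigma}_n - \boldsymbol\Sigma\| \lesssim \|\boldsymbol\Sigma\|\,\frac{\big(\sqrt{\mathbf r(\boldsymbol\Sigma)}+s\big)\big(\sqrt n+\sqrt{\mathbf r(\boldsymbol\Sigma)}+s\big)}{n},
\]
and expanding this product (bounding $(x+y)^2\le 2x^2+2y^2$ and absorbing the cross term $s\sqrt{\mathbf r(\boldsymbol\Sigma)}/n$ via AM--GM) with $t=s^2$ reproduces exactly the claimed bound $\|\boldsymbol\Sigma\|\big(\sqrt{\mathbf r(\boldsymbol\Sigma)/n}+\mathbf r(\boldsymbol\Sigma)/n+\sqrt{t/n}+t/n\big)$; the probability $1-2e^{-s^2}$ differs from $1-e^{-t}$ only by harmless constants. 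The hypothesis $n\gtrsim\mathbf r(\boldsymbol\Sigma)$ is in fact not used in this derivation --- it merely ensures that $\sqrt{\mathbf r(\boldsymbol\Sigma)/n}$ is the dominant of the first two terms, so that the error is $o(1)$ relative to $\|\boldsymbol\Sigma\|$.

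The step I expect to be the real obstacle is the matrix deviation / Gordon input: obtaining sub-Gaussian concentration of $\sup_{\mathbf v\in E}|\,\|\mathbf G^\top\mathbf v\|_2-\sqrt n\|\mathbf v\|_2\,|$ with a mean governed by the single geometric quantity $w(E)$ rather than by the crude $\sqrt p\,\mathrm{rad}(E)$. A naive $\varepsilon$-net of $\mathbb S^{p-1}$ combined with Bernstein's inequality for the sub-exponential variables $\langle\mathbf g^k,\mathbf v\rangle^2-\|\mathbf v\|_2^2$ costs a factor of order $p$ in the exponent and thereby destroys the effective-rank improvement, so the Gaussian comparison (equivalently, generic chaining over the ellipsoid) is exactly the nontrivial part; everything else --- the whitening, the reduction to $E$, the identity $w(E)\le\|\boldsymbol\Sigma\|_*^{1/2}$, and the squaring bookkeeping --- is routine.
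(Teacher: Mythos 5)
Your proof is correct and reproduces the claimed bound, but it follows a different route than the cited source \cite{koltchinskii2014concentration}. You whiten the samples and reduce the problem to a two-sided deviation bound for $\mathbf v \mapsto \|\mathbf G^\top \mathbf v\|_2$ over the ellipsoid $E = \boldsymbol\Sigma^{1/2}\mathbb S^{p-1}$, which you control by the matrix deviation inequality of Liaw--Mehrabian--Plan--Vershynin; this is essentially the derivation proposed as an exercise in Vershynin's \emph{High-Dimensional Probability}. Koltchinskii and Lounici instead first prove the expectation bound $\mathbb E\|\hat{\boldsymbol\Sigma}_n - \boldsymbol\Sigma\| \asymp \|\boldsymbol\Sigma\|\big(\sqrt{\mathbf r(\boldsymbol\Sigma)/n} \vee \mathbf r(\boldsymbol\Sigma)/n\big)$ directly (obtaining a matching lower bound, which your argument does not) and then separately establish Gaussian concentration of $\|\hat{\boldsymbol\Sigma}_n - \boldsymbol\Sigma\|$ around its mean; combining the two yields the tail bound. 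Their argument thus gives strictly more information --- the sharp expectation, a lower bound, and concentration about the (random) mean rather than just a one-sided tail --- while yours is shorter and more modular once the matrix deviation inequality is available as a black box. Your bookkeeping (the Cauchy--Schwarz step $w(E)\le \|\boldsymbol\Sigma\|_*^{1/2}$, the $a^2-b^2$ factorization, the substitution $t=s^2$, and the AM--GM absorption of the cross term) is all sound, and your remark that the hypothesis $n\gtrsim \mathbf r(\boldsymbol\Sigma)$ is not actually used in the upper bound is accurate. The one caveat is your parenthetical suggestion to derive the matrix deviation inequality from Gordon's comparison: the one-sided Gordon bounds on $\mathbb E\max_{\mathbf v}\|\mathbf G^\top\mathbf v\|_2$ and $\mathbb E\min_{\mathbf v}\|\mathbf G^\top\mathbf v\|_2$ do not immediately yield $\mathbb E\sup_{\mathbf v\in T}\big|\|\mathbf G^\top\mathbf v\|_2-\sqrt n\|\mathbf v\|_2\big|\lesssim w(T)$ when $T$ is not a subset of the sphere (as $E$ is not), and the standard proof goes through chaining for the increment process rather than through Gordon alone; citing the inequality, as you primarily propose, is therefore the right move.
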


The authors of \cite{koltchinskii2014concentration} further show that the bound in Theorem \ref{JMthm:Koltchinski} is tight up to constants. If we compare the result to \eqref{JMeq:Estimation_Gauss}, we see that both estimates agree for (effectively) full rank matrices like $\boldsymbol \Sigma = \mathbf I$. If $\boldsymbol \Sigma$ is of low rank, however, Theorem \ref{JMthm:Koltchinski} controls the estimation error even in the case $n < p$.

\paragraph{Low-rank estimators.} We could stop at this point since $\hat{ \boldsymbol \Sigma }_n$ apparently meets our requirements. Nevertheless, two questions remain. First, if one assumes $\boldsymbol \Sigma$ to be low-rank, one would wish for a estimator that is low-rank itself and, second, Theorem \ref{JMthm:Koltchinski} fails if $\mathbf X$ does not exhibit strong concentration around its mean. The first point can be addressed by using the LASSO-estimator
\begin{align} \label{JMeq:LASSOestimator}
	\hat{ \boldsymbol \Sigma }_n^\lambda
	= \mathrm{arg}\min_{\mathbf S \succcurlyeq \boldsymbol 0} \| \mathbf S - \hat{\boldsymbol \Sigma}_n \|_F^2 + \lambda \| \mathbf S \|_* \;,
\end{align}
where $\lambda > 0$ is a tunable parameter. Initially introduced in \cite{lounici2014high} to estimate covariance matrices from incomplete observations, the result reads in our setting as follows.

\begin{theorem}[{\cite[Corollary 1]{lounici2014high}}] \label{JMthm:Lounici}
	Let $\mathbf X \sim \mathcal N (\boldsymbol 0,\boldsymbol \Sigma)$, for $\boldsymbol \Sigma \in \R^{p\times p}$, and $n \gtrsim \mathbf r (\boldsymbol \Sigma) \log(2p + n)^2$. If
	\begin{align*}
		\lambda = C \sqrt{ \mathrm{tr} (\hat{\boldsymbol \Sigma}_n) \| \hat{\boldsymbol \Sigma}_n \| } \sqrt{ \frac{\log(2p)}{n} },
	\end{align*}
    for a sufficiently large absolute constant $C > 0$, then with probability at least $1-\tfrac{1}{2p}$ the estimator in \eqref{JMeq:LASSOestimator} satisfies
    \begin{align*}
    	\| \hat{ \boldsymbol \Sigma }_n^\lambda - \boldsymbol \Sigma \|
    	\lesssim \| \boldsymbol \Sigma \| \sqrt{ \frac{\mathbf r (\boldsymbol \Sigma) \log(2p)}{n} }.
    \end{align*}
\end{theorem}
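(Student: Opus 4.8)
\emph{Proof strategy.} The plan is to exploit that $\hat{\boldsymbol\Sigma}_n$ is symmetric and positive semi-definite, so that the convex program \eqref{JMeq:LASSOestimator} has a closed form lying at operator-norm distance at most $\lambda/2$ from $\hat{\boldsymbol\Sigma}_n$, and then to transfer the bound of Theorem \ref{JMthm:Koltchinski} for $\hat{\boldsymbol\Sigma}_n$ to $\hat{\boldsymbol\Sigma}_n^\lambda$ by the triangle inequality. First I would observe that on the positive semi-definite cone one has $\|\mathbf S\|_* = \mathrm{tr}(\mathbf S)$, so the objective in \eqref{JMeq:LASSOestimator} equals $\|\mathbf S - \hat{\boldsymbol\Sigma}_n\|_F^2 + \lambda\,\mathrm{tr}(\mathbf S) = \|\mathbf S - (\hat{\boldsymbol\Sigma}_n - \tfrac{\lambda}{2}\mathbf I)\|_F^2 - \|\hat{\boldsymbol\Sigma}_n - \tfrac{\lambda}{2}\mathbf I\|_F^2$; hence $\hat{\boldsymbol\Sigma}_n^\lambda$ is the Euclidean projection of $\hat{\boldsymbol\Sigma}_n - \tfrac{\lambda}{2}\mathbf I$ onto the positive semi-definite cone, i.e.\ the matrix obtained from the eigendecomposition of $\hat{\boldsymbol\Sigma}_n$ by replacing each eigenvalue $\sigma_i$ by $(\sigma_i - \tfrac{\lambda}{2})_+$. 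This has two useful consequences: $\hat{\boldsymbol\Sigma}_n^\lambda$ has rank $\#\{i : \sigma_i > \tfrac{\lambda}{2}\}$, which for a sufficiently large constant $C$ makes the estimator genuinely low rank, and every eigenvalue is displaced by at most $\tfrac{\lambda}{2}$, so that $\|\hat{\boldsymbol\Sigma}_n^\lambda - \hat{\boldsymbol\Sigma}_n\| \le \tfrac{\lambda}{2}$.

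With this in hand the triangle inequality gives $\|\hat{\boldsymbol\Sigma}_n^\lambda - \boldsymbol\Sigma\| \le \tfrac{\lambda}{2} + \|\hat{\boldsymbol\Sigma}_n - \boldsymbol\Sigma\|$, and it remains to control both terms on an event of probability at least $1 - \tfrac{1}{2p}$. For the second term I would invoke Theorem \ref{JMthm:Koltchinski} with $t \asymp \log(2p)$: on an event $\mathcal E_1$ of probability at least $1 - \tfrac{1}{4p}$ one gets $\|\hat{\boldsymbol\Sigma}_n - \boldsymbol\Sigma\| \lesssim \|\boldsymbol\Sigma\|\big(\sqrt{\mathbf r(\boldsymbol\Sigma)/n} + \mathbf r(\boldsymbol\Sigma)/n + \sqrt{\log(2p)/n} + \log(2p)/n\big)$, and under the hypothesis $n \gtrsim \mathbf r(\boldsymbol\Sigma)\log(2p+n)^2$ the two linear terms are dominated by the two square-root terms while $\sqrt{\mathbf r(\boldsymbol\Sigma)/n} + \sqrt{\log(2p)/n} \lesssim \sqrt{\mathbf r(\boldsymbol\Sigma)\log(2p)/n}$ since $\max\{a,b\} \le ab$ for $a,b \ge 1$; the same estimate also gives $\|\hat{\boldsymbol\Sigma}_n\| \le 2\|\boldsymbol\Sigma\|$ on $\mathcal E_1$. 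For the first term I would bound $\lambda$: since $\mathrm{tr}(\hat{\boldsymbol\Sigma}_n) = \tfrac1n\sum_{k=1}^n \|\mathbf X^k\|_2^2$ is an average of independent Gaussian quadratic forms with mean $\mathrm{tr}(\boldsymbol\Sigma) = \|\boldsymbol\Sigma\|_*$, a Hanson--Wright / Bernstein-type inequality produces an event $\mathcal E_2$ of probability at least $1 - \tfrac{1}{4p}$ on which $\mathrm{tr}(\hat{\boldsymbol\Sigma}_n) \le 2\,\mathrm{tr}(\boldsymbol\Sigma)$ (again using $n \gtrsim \mathbf r(\boldsymbol\Sigma)\log(2p)$). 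Hence on $\mathcal E_1 \cap \mathcal E_2$ we have $\mathrm{tr}(\hat{\boldsymbol\Sigma}_n)\,\|\hat{\boldsymbol\Sigma}_n\| \le 4\,\|\boldsymbol\Sigma\|_*\|\boldsymbol\Sigma\| = 4\,\mathbf r(\boldsymbol\Sigma)\|\boldsymbol\Sigma\|^2$ and therefore $\lambda \le 2C\|\boldsymbol\Sigma\|\sqrt{\mathbf r(\boldsymbol\Sigma)\log(2p)/n}$; plugging this and the bound on $\|\hat{\boldsymbol\Sigma}_n - \boldsymbol\Sigma\|$ into $\|\hat{\boldsymbol\Sigma}_n^\lambda - \boldsymbol\Sigma\| \le \tfrac{\lambda}{2} + \|\hat{\boldsymbol\Sigma}_n - \boldsymbol\Sigma\|$ and taking a union bound over $\mathcal E_1, \mathcal E_2$ yields the claim.

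I expect the main obstacle to be the first step, not the probability: one must notice that the semidefinite constraint turns the nuclear norm into the linear trace functional so that \eqref{JMeq:LASSOestimator} collapses to a projection, and then carry out the somewhat tedious bookkeeping that absorbs the four separate terms of Theorem \ref{JMthm:Koltchinski} into the single term $\|\boldsymbol\Sigma\|\sqrt{\mathbf r(\boldsymbol\Sigma)\log(2p)/n}$ under the stated sample-size condition. Everything else is routine. I note that the original proof in \cite{lounici2014high}, which also covers missing observations, takes a different and heavier route: it starts from the optimality (``basic'') inequality of \eqref{JMeq:LASSOestimator}, decomposes $\hat{\boldsymbol\Sigma}_n^\lambda - \boldsymbol\Sigma$ along the range of $\boldsymbol\Sigma$ to obtain a cone/compatibility condition, and then controls $\|\hat{\boldsymbol\Sigma}_n - \boldsymbol\Sigma\|$ restricted to the relevant low-dimensional subspace. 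That machinery becomes genuinely necessary once $\hat{\boldsymbol\Sigma}_n$ is replaced by a biased incomplete-data estimator, but for the fully observed Gaussian case stated here the projection argument above is enough.
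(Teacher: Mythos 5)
Your argument is correct, and it genuinely diverges from the proof in \cite{lounici2014high} (the paper itself only cites this result and gives no proof). Your key observation --- that on the PSD cone $\|\mathbf S\|_*=\mathrm{tr}(\mathbf S)$ collapses \eqref{JMeq:LASSOestimator}, after completing the square, into the Euclidean projection of $\hat{\boldsymbol\Sigma}_n-\tfrac{\lambda}{2}\mathbf I$ onto the PSD cone, which for PSD $\hat{\boldsymbol\Sigma}_n$ is eigenvalue soft-thresholding and hence satisfies $\|\hat{\boldsymbol\Sigma}_n^\lambda-\hat{\boldsymbol\Sigma}_n\|\le\lambda/2$ --- is sound, and reducing the problem to Theorem~\ref{JMthm:Koltchinski} plus a Bernstein bound on $\mathrm{tr}(\hat{\boldsymbol\Sigma}_n)$ is a clean way to finish. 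Lounici's proof instead runs the generic matrix-LASSO machinery (basic inequality, decomposition along the tangent space of the low-rank target, restricted strong convexity), because in the missing-data setting his plug-in covariance estimator is not PSD and the regularizer really does act as a nuclear norm rather than a trace; your shortcut exploits precisely the extra structure of the fully observed Gaussian case. The trade-off is that your route gives a sharper intuition and less bookkeeping here, but does not extend to the incomplete-data setting the cited corollary is drawn from. One minor bookkeeping point worth making explicit: the absorption of the four Koltchinskii terms into $\|\boldsymbol\Sigma\|\sqrt{\mathbf r(\boldsymbol\Sigma)\log(2p)/n}$ uses $\mathbf r(\boldsymbol\Sigma)\ge 1$ and $\log(2p)\ge 1$, both available since $p\ge 2$, and the sample-size hypothesis $n\gtrsim\mathbf r(\boldsymbol\Sigma)\log(2p+n)^2$ indeed guarantees $\mathbf r(\boldsymbol\Sigma)/n\le 1$ and $\log(2p)/n\le 1$, so the linear terms are dominated by the square-root terms as you claim.
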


The nuclear norm regularization in \eqref{JMeq:LASSOestimator} induces (effective) low-rankness on $\hat{ \boldsymbol \Sigma }_n^\lambda$ \cite{recht2010guaranteed} and the order of estimation error reflects up to $\log$-factors the one in Theorem \ref{JMthm:Koltchinski}. Furthermore, the construction of $\hat{ \boldsymbol \Sigma }_n^\lambda $ can easily be adapted to heavy-tailed distributions by replacing $\hat{ \boldsymbol \Sigma }_n$ with an appropriate robust counterpart, e.g., the spectrum-wise truncated sample covariance matrix \cite{ke2019user}. A corresponding version of Theorem \ref{JMthm:Lounici} that is not restricted to (sub)-Gaussian distributions is \cite[Theorem 5.2]{ke2019user}.


\subsection{Toeplitz covariance matrices and combined structures}
\label{JMsec:Toeplitz}

The third structure we discuss here in detail naturally arises in various engineering problems. If the entries of $\mathbf X$ resemble measurements on a temporal or spatial grid whose covariances only depend on the distances of measurements (in time or space) but not their location, $\boldsymbol \Sigma$ is a \emph{symmetric Toeplitz matrix}, i.e.,
\begin{align*}
	\boldsymbol \Sigma = \begin{pmatrix}
		\sigma_1 & \sigma_2 & \cdots & \sigma_{p} \\
		\sigma_2 & \ddots & \ddots & \vdots \\
		\vdots & \ddots & & \sigma_2 \\
		\sigma_{p} & \cdots & \sigma_2 & \sigma_1
	\end{pmatrix}
\end{align*}
and the first column $\boldsymbol \sigma \in \R^p$ determines $\boldsymbol \Sigma$ via $\Sigma_{i,j} = \sigma_{|i-j|+1}$. (For simplicity we identify Toeplitz matrices with their first column in the following.) Such a structure appears, for instance, in Direction-Of-Arrival (DOA) estimation \cite{krim1996two} and medical/radar imaging processing \cite{brookes2008optimising,snyder1989use}. For further examples, we refer the reader to \cite{romero2016compressive}. Since Toeplitz structure reduces the degrees of freedom in $\boldsymbol \Sigma$ from $p^2$ to $p$, leveraging this structure can lead to a notable reduction in sample complexity.

The authors of \cite{cai2013optimal} propose to average the sample covariance matrix along its diagonals to obtain the Toeplitz estimator $\hat{ \boldsymbol \Sigma }_n^\text{Toep}$ defined as
\begin{align*}
	[\hat{\sigma}_n^\text{Toep}]_r = \frac{1}{(p+1)-r} \sum_{i-j = r-1} [\hat{\Sigma}_n]_{i,j},
	\quad \text{for } r \in [p].
\end{align*}
They derive error estimates for Gaussian distributions with banded Toeplitz covariance matrices.

The more recent work \cite{kabanava2017masked} extends these results to non-Gaussian distributions and general masks as introduced in Section \ref{JMsec:SparseCE}. To be more precise, the authors of \cite{kabanava2017masked} assume that the distribution of $\mathbf X$ has the so-called convex concentration property.
\begin{definition}
	A random vector $\mathbf X \in \R^p$ has the \emph{convex concentration property} with constant $K$ if for any $1$-Lipschitz function $\phi \colon \R^p \to \R$, one has $\mathbb E [\phi(\mathbf X)] < \infty$ and 
	\begin{align*}
		\mathrm{Pr} \left[ | \phi(\mathbf X) - \mathbb E [\phi(\mathbf X)] | \ge t \right] \le 2 e^{-\tfrac{t^2}{K^2}}, 
		\quad \text{for all } t > 0.
	\end{align*}
\end{definition}
By setting $\phi = \mathrm{Id}$ one easily sees that all distributions which have the convex concentration property are subgaussian. For the sake of consistency we hence restrict ourselves here to Gaussian distributions as their most prominent representative. For a symmetric Toeplitz mask $\mathbf M \in [0,1]^{p\times p}$ characterized by its first column $\mathbf m \in [0,1]^p$, we furthermore define the weighted $\ell_1$- and $\ell_2$-norms of $\mathbf m$ as
\begin{align*}
	\| \mathbf m \|_{1,*} = \sum_{r = 1}^p \frac{m_r}{(p+1)-r}
	\quad \text{and} \quad 
	\| \mathbf m \|_{2,*} = \left( \sum_{r = 1}^p \frac{m_r^2}{(p+1)-r} \right)^\frac{1}{2}.
\end{align*} 

\begin{theorem}[{\cite[Theorem 3]{kabanava2017masked}}] \label{JMthm:Kabanava}
	Let $\mathbf M \in [0,1]^{p\times p}$ be a symmetric Toeplitz mask and $\mathbf X \sim \mathcal N(\boldsymbol 0,\boldsymbol \Sigma)$, for $\boldsymbol \Sigma \in \R^{p\times p}$ symmetric and Toeplitz. Then,
	\begin{align*}
		\mathbb E [ \| \mathbf M \odot \hat{\boldsymbol \Sigma}_n^\text{Toep} - \mathbf M \odot \boldsymbol \Sigma \| ]
		\lesssim 
		\| \boldsymbol \Sigma \| \left( \sqrt{\frac{ \| \mathbf m \|_{2,*} \log(p) }{n}} + \frac{ \| \mathbf m \|_{1,*} \log(p) }{n} \right).
	\end{align*}
\end{theorem}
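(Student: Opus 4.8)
The plan is to reduce the operator-norm bound to controlling the supremum of an order-two Gaussian chaos process over the frequency circle, and then to combine a pointwise Hanson--Wright estimate with a discretization (or chaining) argument in the frequency variable. First, since $\mathbf M$ is a symmetric Toeplitz mask and $\odot$ acts entrywise, both $\mathbf M\odot\hat{\boldsymbol\Sigma}_n^{\mathrm{Toep}}$ and $\mathbf M\odot\boldsymbol\Sigma$ are again symmetric Toeplitz, with first columns $\mathbf m\odot\hat{\boldsymbol\sigma}_n^{\mathrm{Toep}}$ and $\mathbf m\odot\boldsymbol\sigma$. I would use the classical fact that a symmetric Toeplitz matrix $T_p(\mathbf v)$ with symbol $\hat v(\omega)=v_1+2\sum_{r=2}^{p}v_r\cos((r-1)\omega)$ satisfies $\|T_p(\mathbf v)\|\le\esssup_{\omega}|\hat v(\omega)|$ (it is a compression of multiplication by $\hat v$). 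Applying this with $\mathbf v=\mathbf m\odot(\hat{\boldsymbol\sigma}_n^{\mathrm{Toep}}-\boldsymbol\sigma)$ and inserting the definitions of $\hat{\boldsymbol\sigma}_n^{\mathrm{Toep}}$ and $\hat{\boldsymbol\Sigma}_n$ yields
\[
\bigl\|\mathbf M\odot\hat{\boldsymbol\Sigma}_n^{\mathrm{Toep}}-\mathbf M\odot\boldsymbol\Sigma\bigr\|\ \le\ \sup_{\omega}\,\Bigl|\tfrac1n\sum_{k=1}^{n}\bigl((\mathbf X^k)^\top\mathbf A_\omega\mathbf X^k-\mathbb{E}[(\mathbf X^k)^\top\mathbf A_\omega\mathbf X^k]\bigr)\Bigr|,
\]
where $\mathbf A_\omega$ is the symmetric weighted Toeplitz matrix whose $\pm(r-1)$ diagonals carry weight proportional to $m_r\cos((r-1)\omega)/(p+1-r)$. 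It remains to bound the expected supremum of this chaos process indexed by $\omega\in[0,2\pi)$.

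For a fixed $\omega$, the inner quantity is an average of $n$ i.i.d.\ centered Gaussian quadratic forms. Writing $\mathbf X^k=\boldsymbol\Sigma^{1/2}\mathbf g^k$ with $\mathbf g^k\sim\mathcal N(\boldsymbol0,\mathbf I)$ and $\mathbf B_\omega=\boldsymbol\Sigma^{1/2}\mathbf A_\omega\boldsymbol\Sigma^{1/2}$, the Hanson--Wright inequality (e.g.\ \cite{vershynin2018high}) combined with Bernstein's inequality gives, with probability at least $1-2e^{-u}$, a bound of order $\|\mathbf B_\omega\|_F\sqrt{u/n}+\|\mathbf B_\omega\|\,u/n$. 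The key deterministic step is then to estimate $\|\mathbf B_\omega\|_F$ and $\|\mathbf B_\omega\|$ uniformly in $\omega$. Using $\|\boldsymbol\Sigma^{1/2}\mathbf Z\boldsymbol\Sigma^{1/2}\|\le\|\boldsymbol\Sigma\|\,\|\mathbf Z\|$ (and likewise in Frobenius norm) it suffices to treat $\mathbf A_\omega$. Since the pieces supported on distinct diagonals are Frobenius-orthogonal and the $r$-th diagonal has $p+1-r$ entries of size $\lesssim m_r/(p+1-r)$, one gets $\|\mathbf A_\omega\|_F^2\lesssim\sum_{r}m_r^2/(p+1-r)$, i.e.\ $\|\mathbf B_\omega\|_F\lesssim\|\boldsymbol\Sigma\|\,\|\mathbf m\|_{2,*}$; and bounding the symbol of $\mathbf A_\omega$ by the $\ell_1$-norm of its Fourier coefficients gives $\|\mathbf A_\omega\|\lesssim\sum_{r}m_r/(p+1-r)=\|\mathbf m\|_{1,*}$, i.e.\ $\|\mathbf B_\omega\|\lesssim\|\boldsymbol\Sigma\|\,\|\mathbf m\|_{1,*}$. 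Hence each fixed $\omega$ contributes a tail bound of order $\|\boldsymbol\Sigma\|\bigl(\|\mathbf m\|_{2,*}\sqrt{u/n}+\|\mathbf m\|_{1,*}\,u/n\bigr)$.

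It remains to pass from a fixed frequency to the supremum. The map $\omega\mapsto\mathbf A_\omega$ is smooth with operator- and Frobenius-norm derivative bounded by a polynomial in $p$, so (after a harmless truncation of the Gaussians to control the derivatives of the quadratic forms) the process $\omega\mapsto Z_\omega$ is Lipschitz on an event of overwhelming probability with a constant polynomial in $p$ and $n$. Taking a net of $[0,2\pi)$ of cardinality $\operatorname{poly}(p,n)$, a union bound of the previous estimate with $u\asymp\log p$ for the fast term (and $u\asymp\log(np)$ for the slow, sub-exponential term) controls the net, while the oscillation between net points is negligible; integrating the tail over $u$ then yields the claimed bound on $\mathbb{E}\,\|\mathbf M\odot\hat{\boldsymbol\Sigma}_n^{\mathrm{Toep}}-\mathbf M\odot\boldsymbol\Sigma\|$.

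I expect the uniform-in-$\omega$ step to be the main obstacle: one must either make the net/Lipschitz argument genuinely quantitative with the correct logarithmic loss, or, more cleanly, invoke a generic-chaining / suprema-of-chaos-processes estimate and verify that the relevant $\gamma_2$-functional of the family $\{\mathbf A_\omega\}_\omega$ is governed by the weighted norms $\|\mathbf m\|_{2,*}$ and $\|\mathbf m\|_{1,*}$. A secondary technical point is to carry these two weighted norms cleanly through the conjugation by $\boldsymbol\Sigma^{1/2}$ and through the diagonal-averaging weights of $\hat{\boldsymbol\Sigma}_n^{\mathrm{Toep}}$, but this is essentially bookkeeping once the disjoint-support structure of the diagonal pieces is exploited.
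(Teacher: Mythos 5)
Your high-level strategy --- bound the operator norm of a symmetric Toeplitz matrix by the sup of its symbol, write the symbol at each frequency as a centered Gaussian chaos, apply Hanson--Wright pointwise, and then pass to the supremum over $\omega$ --- is the right one and is essentially the strategy used in \cite{kabanava2017masked}. The deterministic norm bookkeeping is also correct: the diagonal blocks of $\mathbf A_\omega$ are Frobenius-orthogonal, so $\|\mathbf A_\omega\|_F^2\lesssim\sum_r m_r^2/(p{+}1{-}r)=\|\mathbf m\|_{2,*}^2$, the symbol bound gives $\|\mathbf A_\omega\|\lesssim\|\mathbf m\|_{1,*}$, and the conjugation by $\boldsymbol\Sigma^{1/2}$ costs only a factor $\|\boldsymbol\Sigma\|$ in both norms.

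The step you yourself flag as the obstacle --- passing from a fixed $\omega$ to the supremum --- is where your proposal is more complicated than it needs to be. The symbol $\omega\mapsto m_1\hat v_1+2\sum_{r\ge 2}m_r\hat v_r\cos((r{-}1)\omega)$ is a real trigonometric polynomial of degree at most $p-1$, so its supremum on $[0,2\pi)$ is comparable (up to an absolute constant) to its maximum over $O(p)$ equispaced frequencies; equivalently, one can embed the $p\times p$ Toeplitz matrix into a $2p\times 2p$ circulant, whose eigenvalues are exactly DFT values of the (extended) first column. Either way, the supremum over $\omega$ reduces \emph{exactly} (not just approximately on a high-probability event) to a maximum over $O(p)$ frequencies, and a union bound at level $u\asymp\log p$ then produces the $\log p$ factor with no Lipschitz estimate, Gaussian truncation, or $\gamma_2$-functional required. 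Your proposed net-plus-Lipschitz route would also work, but it requires controlling the derivative of a random chaos process on a good event and would likely pick up extra logarithmic factors in $n$.

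One further point worth recording: your computation lands on $\|\boldsymbol\Sigma\|\,\|\mathbf m\|_{2,*}\sqrt{\log(p)/n}$ for the subgaussian term, not $\|\boldsymbol\Sigma\|\sqrt{\|\mathbf m\|_{2,*}\log(p)/n}$ as displayed in the theorem. Your form is the one that is internally consistent with the paper: with the stated definition $\|\mathbf m\|_{2,*}=\bigl(\sum_r m_r^2/(p{+}1{-}r)\bigr)^{1/2}$ and a banded mask of bandwidth $B\le p/2$, your bound gives $\sqrt{B/(pn)}$ up to logarithms, exactly the rate quoted in the discussion following Theorem \ref{JMthm:Kabanava} (and in \cite[Corollary 2]{kabanava2017masked}), whereas the displayed formula would give $(B/p)^{1/4}n^{-1/2}$. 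So the displayed theorem has $\|\mathbf m\|_{2,*}$ where $\|\mathbf m\|_{2,*}^2$ is meant inside the square root; trust your derivation on this point.
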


As Theorem \ref{JMthm:Chen}, the result is not restricted to an estimate of the expected error but includes respective high probability bounds with exponential tail decay. Let us compare Theorem \ref{JMthm:Kabanava} to Theorem \ref{JMthm:Chen}. If we ignore $\log$-factors and $\mathbf M$ is a banding or tapering mask with support band-width $B \le \tfrac{p}{2}$, Theorem \ref{JMthm:Kabanava} guarantees an estimation error of order $\mathcal{O}(\sqrt{\tfrac{B}{pn}} + \tfrac{B}{pn})$, cf.\ \cite[Corollary 2]{kabanava2017masked}, which improves the estimate $\mathcal{O}(\sqrt{\tfrac{B}{n}} + \tfrac{B}{n})$ of Theorem \ref{JMthm:Chen} by a factor $p$. This improvement corresponds to the reduction in degrees of freedom when comparing Toeplitz to general matrices. Note, however, that the additional assumption $B \le \alpha p$, for $\alpha \in (0,1)$, is required for such a reduction since estimation of the outermost diagonals of $\boldsymbol \Sigma$ is hardly enhanced by averaging over the Toeplitz structure. This is expressed by Theorem \ref{JMthm:Kabanava} since $\| \mathbf m \|_{1,*}$ and $\| \mathbf m \|_{2,*}$ are $\mathcal O(1)$ and not $\mathcal{O}(\tfrac{1}{p})$ if the tail entries of $\mathbf m$ are not of vanishing magnitude.

\paragraph{Estimation via thresholding.} Theorem \ref{JMthm:Kabanava} differs from the previously discussed results in the sense that it allows to simultaneously leverage two structures of $\boldsymbol \Sigma$, sparsity and Toeplitz structure. Nevertheless, as in Section \ref{JMsec:SparseCE} the masked framework leaves open the question of how to choose $\mathbf M$ in practice. By combining the thresholded approach in Theorem \ref{JMthm:Bickel} with the techniques of Theorem \ref{JMthm:Kabanava} one can obtain a thresholded Toeplitz estimator which profits from both structural priors. To state a corresponding estimate, let us define the set of bounded Toeplitz covariance matrices with (effectively) sparse first column $\boldsymbol \sigma$ by
\begin{align*}
	\mathcal{U}^\text{Toep} (q,s,M) := \left\{ \boldsymbol \Sigma \colon \Sigma_{i,j} = \sigma_{|i-j|+1} \le M, \text{ for } \boldsymbol \sigma \in \R^p \text{ with } \sum_{r=1}^p |\sigma_{r}|^q \le s \right\}.
\end{align*}
We furthermore denote by $\mathbb B_{\alpha p} (\boldsymbol \Sigma)$ the matrix $\boldsymbol \Sigma$ restricted to band-width $\alpha p$, i.e., $[\mathbb B_{\alpha p} (\boldsymbol \Sigma)]_{i,j} = \Sigma_{i,j}$ if $|i-j|+1 \le \alpha p$ and $[\mathbb B_{\alpha p} (\boldsymbol \Sigma)]_{i,j} = 0$ else.

\begin{theorem} \label{JMthm:ToeplitzThresholding}
	Let $\mathbf X$ have the convex concentration property with constant $K$. Let $\mathbb E [\mathbf X] = \boldsymbol 0$ and  $\mathbb E [\mathbf X \mathbf X^\top] = \boldsymbol \Sigma$, for $\boldsymbol \Sigma \in \mathcal{U}^\text{Toep} (q,s,M)$. There exists an absolute constant $C>0$ such that, for all $\alpha \in (0,1)$ and $c > 1$, the following holds with probability at least $1 - (2\alpha p)^{-(c-1)}$. If
	\begin{align} \label{JMeq:TauRule}
		\tau = \sqrt{\frac{2c}{(1-\alpha)}} \max \{CK^2,\sqrt{C}K\} \sqrt{\frac{\log(p)}{np}},
	\end{align}
    then
    \begin{align*}
    	\left\| \mathbb T_\tau ( \mathbb B_{\alpha p} (\hat{\boldsymbol \Sigma}_n^\text{Toep} )) - \boldsymbol \Sigma \right\| 
    	\lesssim s \left( \max \{C^2K^4,CK^2\} \frac{c}{1 -  \alpha} \frac{\log(p)}{np} \right)^{\frac{1-q}{2}} + \| \mathbb B_{\alpha p} (\boldsymbol \Sigma) - \boldsymbol \Sigma \|,
    \end{align*}
    where $\mathbb T_\tau$ is the thresholding operator from \eqref{JMeq:ThresholdingOperator}.
\end{theorem}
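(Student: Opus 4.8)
The plan is to reproduce the bias--variance decomposition underlying Theorems~\ref{JMthm:Bickel} and~\ref{JMthm:Kabanava}, with the entry-wise concentration of the diagonal averages from \cite{kabanava2017masked} as the probabilistic engine and the Bickel--Levina thresholding bookkeeping from \cite{bickel2008covariance} for the combinatorics. First, by the triangle inequality I would split off the deterministic bias:
\begin{align*}
  \left\| \mathbb T_\tau(\mathbb B_{\alpha p}(\hat{\boldsymbol\Sigma}_n^{\text{Toep}})) - \boldsymbol\Sigma \right\|
  \le \left\| \mathbb T_\tau(\mathbb B_{\alpha p}(\hat{\boldsymbol\Sigma}_n^{\text{Toep}})) - \mathbb B_{\alpha p}(\boldsymbol\Sigma) \right\|
  + \left\| \mathbb B_{\alpha p}(\boldsymbol\Sigma) - \boldsymbol\Sigma \right\|,
\end{align*}
the second term being exactly the bias appearing in the claim, so it remains to bound the first (variance) term. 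All three matrices involved there are symmetric, Toeplitz and banded with band-width at most $\alpha p$ --- entry-wise thresholding and band-restriction both preserve the Toeplitz pattern --- hence their difference is again symmetric banded Toeplitz, and its operator norm is at most its maximal absolute row sum, which is at most $2\sum_{r=1}^{\lfloor \alpha p\rfloor} |d_r|$ with $d_r := \mathbb T_\tau([\hat\sigma_n^{\text{Toep}}]_r) - \sigma_r$ (and $d_r = 0$ for $r > \alpha p$, since both matrices vanish there).

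For the probabilistic core I would control $\Delta := \max_{1\le r\le \alpha p}\bigl| [\hat\sigma_n^{\text{Toep}}]_r - \sigma_r \bigr|$. For each $r$ the diagonal average is an unbiased estimator of $\sigma_r$ and equals a quadratic form $\mathbf Z^\top \mathbf A_r \mathbf Z$ in the stacked sample vector $\mathbf Z = ((\mathbf X^1)^\top,\dots,(\mathbf X^n)^\top)^\top \in \R^{np}$, where $\mathbf A_r$ is block-diagonal with $n$ copies of the symmetrized $r$-th off-diagonal selector, normalised by $\tfrac{1}{(p+1-r)n}$; a direct count gives $\|\mathbf A_r\|_F^2 \asymp \tfrac{1}{(p+1-r)n}$ and $\|\mathbf A_r\| \le \tfrac{1}{(p+1-r)n}$. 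Since $\mathbf Z$ inherits the convex concentration property with a constant comparable to $K$, the Hanson--Wright inequality for vectors with the convex concentration property (as used in \cite{kabanava2017masked}) combined with $p+1-r \ge (1-\alpha)p$ for $r\le \alpha p$ yields
\begin{align*}
  \mathrm{Pr}\Bigl[ \bigl|[\hat\sigma_n^{\text{Toep}}]_r - \sigma_r\bigr| > t \Bigr]
  \le 2\exp\!\left( -c_0(1-\alpha)np\,\min\!\left\{ \tfrac{t^2}{K^4},\, \tfrac{t}{K^2} \right\} \right)
\end{align*}
for an absolute constant $c_0 > 0$. The rule \eqref{JMeq:TauRule} is calibrated precisely so that (after fixing the absolute constant $C$ large enough relative to $1/c_0$, and in the natural regime $np \gtrsim \tfrac{c}{1-\alpha}\log p$, exactly as in Theorem~\ref{JMthm:Bickel}) the right-hand side at $t = \tau/2$ is at most $\tfrac12(2\alpha p)^{-c}$; a union bound over the at most $\alpha p$ retained diagonals then gives $\mathrm{Pr}[\Delta \le \tau/2] \ge 1 - (2\alpha p)^{-(c-1)}$.

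On the event $\{\Delta \le \tau/2\}$ I would run the standard thresholding estimate diagonal by diagonal. If $|\sigma_r| < \tau/2$ then $|[\hat\sigma_n^{\text{Toep}}]_r| < \tau$, the entry is thresholded to zero and $d_r = 0$ --- this is where the factor-two gap between the fluctuation level $\tau/2$ and the threshold $\tau$ is used. For the rest I would split
\begin{align*}
  \sum_{r\le\alpha p} |d_r|
  \le \sum_{r\le\alpha p} \bigl|[\hat\sigma_n^{\text{Toep}}]_r - \sigma_r\bigr|\, \mathbf 1\bigl\{|[\hat\sigma_n^{\text{Toep}}]_r| \ge \tau\bigr\}
  + \sum_{r\le\alpha p} |\sigma_r|\, \mathbf 1\bigl\{|[\hat\sigma_n^{\text{Toep}}]_r| < \tau\bigr\},
\end{align*}
observe that on $\{\Delta\le\tau/2\}$ the condition $|[\hat\sigma_n^{\text{Toep}}]_r|\ge\tau$ forces $|\sigma_r| \ge \tau/2$ while $|[\hat\sigma_n^{\text{Toep}}]_r| < \tau$ forces $|\sigma_r| < 2\tau$, and invoke the defining constraint $\sum_r |\sigma_r|^q \le s$ of $\mathcal U^{\text{Toep}}(q,s,M)$ twice: the first sum is at most $\tfrac{\tau}{2}\cdot(\tfrac{\tau}{2})^{-q}s$ and the second at most $(2\tau)^{1-q}s$, so altogether $\sum_{r\le\alpha p}|d_r| \lesssim s\tau^{1-q}$. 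Combining this with the row-sum bound and substituting $\tau$ from \eqref{JMeq:TauRule} --- so that $\tau^{1-q} = \bigl(\tfrac{2c}{1-\alpha}\max\{C^2K^4, CK^2\}\tfrac{\log p}{np}\bigr)^{(1-q)/2}$ --- and then adding back the bias term gives exactly the asserted estimate.

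The step I expect to be the main obstacle is the probabilistic one in the second paragraph: one has to verify that the convex concentration property tensorises to $\mathbf Z\in\R^{np}$ with a comparable constant, feed the precise Frobenius-norm scaling $\|\mathbf A_r\|_F^2 \asymp \tfrac{1}{(p+1-r)n}$ into a Hanson--Wright-type bound valid under the convex concentration property (cf.\ \cite{kabanava2017masked} and the references therein), and then calibrate the absolute constant $C$ in \eqref{JMeq:TauRule} against the Hanson--Wright constant and the union-bound budget. It is exactly this scaling --- together with the band-restriction to width $\alpha p$, which guarantees $p+1-r \gtrsim (1-\alpha)p$ for every retained diagonal --- that produces the $\tfrac{1}{1-\alpha}\cdot\tfrac{\log p}{np}$ (rather than $\tfrac{\log p}{n}$) inside the final rate, and hence the $p$-fold sample-complexity gain over Theorem~\ref{JMthm:Bickel}. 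The Toeplitz operator-norm reduction and the thresholding bookkeeping are then routine adaptations of \cite{bickel2008covariance,kabanava2017masked}.
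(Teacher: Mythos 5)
Your bias--variance decomposition, the Gershgorin/row-sum reduction for symmetric banded Toeplitz matrices, and the Bickel--Levina bookkeeping (using $\tau/2$ as the fluctuation level, which is in fact slightly cleaner than the paper's split into terms $(I)$--$(V)$ with an auxiliary $\gamma\in(0,1)$) all match the paper's strategy and are correct. The genuine gap is exactly the one you flag yourself: the stacking step. You pass from $n$ independent $p$-dimensional vectors $\mathbf X^1,\dots,\mathbf X^n$ to a single $np$-dimensional vector $\mathbf Z$, apply a Hanson--Wright inequality to the block-diagonal quadratic form $\mathbf Z^\top \mathbf A_r \mathbf Z$, and for that you need $\mathbf Z$ to satisfy the convex concentration property with a constant comparable to $K$. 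This tensorisation is \emph{not} automatic: the convex concentration property is not known to tensorise dimension-freely for general (unbounded) sub-Gaussian factors --- it does for bounded supports (Talagrand) or under a stronger transport-type hypothesis, but nothing in the theorem's assumptions guarantees it. So as written the probabilistic core of the argument is not closed.

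The paper's Lemma~\ref{JMlem:1} is structured precisely to avoid this. It fixes a single sample, writes $Z^r_k := \langle \mathbf M_r \mathbf X^k,\mathbf X^k\rangle - \mathbb E[\cdot]$ with the one-diagonal mask $\mathbf M_r$, and applies the Hanson--Wright inequality of Adamczak to that single $p$-dimensional quadratic form (so CCP is only ever invoked for one sample vector $\mathbf X^k$, where it holds by hypothesis). It then converts the resulting tail bound, by integrating out moments, into a sub-gamma bound for each $Z^r_k$ with variance factor $\nu \asymp K^4\big(\|\mathbf M_r\|_F^2 + \|\mathbf M_r\|^2\big)$ and scale $\gamma \asymp K^2\|\mathbf M_r\|$, and finally uses the elementary fact that independent sub-gamma variables have a sub-gamma sum (Chernoff over the MGF bounds, \cite[Ch.~2.4]{boucheron2013concentration}). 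Since $\|\mathbf M_r\|_F^2 = \|\mathbf M_r\| = \tfrac{1}{p+1-r}$, this reproduces exactly the $\tfrac{1}{(1-\alpha)np}$ scaling you obtained by stacking, but without ever needing CCP for the joint vector. To fix your proof, replace the stacking/tensorisation paragraph with this sample-by-sample sub-gamma argument (or invoke the sum-of-independent-quadratic-forms version of Adamczak's inequality directly, if you want to cite it); everything before and after it can stay as you wrote it.
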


Two comments are in order here. To gain from the Toeplitz structure, Theorem \ref{JMthm:ToeplitzThresholding} requires $\boldsymbol \Sigma$ to be close to a banded matrix. This is as in Theorem \ref{JMthm:Kabanava} before and has been discussed previously. Moreover, by adapting the proof strategy of Theorem \ref{JMthm:Bickel} the result inherits the slightly sub-optimal error decay in the sparsity level $s$, cf.\ the discussion of Theorem \ref{JMthm:Bickel} for the case $q=0$.\\
To show Theorem \ref{JMthm:ToeplitzThresholding}, we need the following lemma. In the remaining section, $\boldsymbol \sigma$ always refers to the first column of $\boldsymbol \Sigma$ and $\hat{ \boldsymbol \sigma}$ to the first column of $\hat{\boldsymbol \Sigma}_n^\text{Toep} $.

\begin{lemma} \label{JMlem:1}
	Under the assumptions of Theorem \ref{JMthm:ToeplitzThresholding}, we have for $\alpha \in (0,1)$ and $0 < u < 1$ that
	\begin{align*}
		\mathrm{Pr} \left[ \max_{r \le \alpha p} |\hat{\sigma}_r - \sigma_r| \ge \sqrt{u} \right] \le 2 \alpha p e^{-(1-\alpha) \min\left\{ \frac{1}{CK^4},\frac{1}{CK^2} \right\} n p u},
	\end{align*}
	where $C > 0$ is an absolute constant.
\end{lemma}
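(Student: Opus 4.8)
\emph{Proof plan.} Following the approach of \cite{kabanava2017masked}, the idea is to identify, for each fixed $r$, the centered diagonal average $\hat\sigma_r - \sigma_r$ as a normalized quadratic form in the stacked sample vector and to apply a Hanson--Wright-type deviation inequality valid under the convex concentration property; a union bound over $r \le \alpha p$ then finishes the proof.

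Concretely, set $\widetilde{\mathbf X} = ((\mathbf X^1)^\top,\dots,(\mathbf X^n)^\top)^\top \in \R^{np}$. As the $\mathbf X^k$ are independent and each satisfies the convex concentration property with constant $K$, the concatenation $\widetilde{\mathbf X}$ again satisfies the convex concentration property with a constant that is an absolute multiple of $K$ (this tensorization follows by conditioning, since convexity and the Lipschitz property are preserved under partial expectation; see also the discussion surrounding Adamczak's Hanson--Wright inequality for vectors with this property). For $r \in [p]$ let $\mathbf B_r \in \R^{p\times p}$ be given by $\mathbf B_1 = \mathbf I$ and, for $r \ge 2$, $[\mathbf B_r]_{i,j} = \tfrac12$ if $|i-j| = r-1$ and $[\mathbf B_r]_{i,j} = 0$ otherwise, and let $\mathbf A_r \in \R^{np\times np}$ be block-diagonal with $n$ copies of $\mathbf B_r$. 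Since $X_iX_j = X_jX_i$ one has $(\mathbf X^k)^\top \mathbf B_r \mathbf X^k = \sum_{i-j=r-1}X_i^kX_j^k$, hence $\widetilde{\mathbf X}^\top \mathbf A_r \widetilde{\mathbf X} = n(p+1-r)\,\hat\sigma_r$, and, using that $\boldsymbol\Sigma$ is Toeplitz,
\begin{align*}
	\hat\sigma_r - \sigma_r = \frac{1}{n(p+1-r)}\Bigl(\widetilde{\mathbf X}^\top \mathbf A_r \widetilde{\mathbf X} - \expect\bigl[\widetilde{\mathbf X}^\top \mathbf A_r \widetilde{\mathbf X}\bigr]\Bigr).
\end{align*}

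It remains to estimate the norms of $\mathbf A_r$ and to apply the inequality. Because $\mathbf B_r$ is a convex combination of a partial shift matrix and its transpose, $\|\mathbf A_r\| = \|\mathbf B_r\| \le 1$; counting the nonzero entries gives $\|\mathbf A_r\|_F^2 = n\|\mathbf B_r\|_F^2 \le n(p+1-r)$. Applying the Hanson--Wright inequality for $\widetilde{\mathbf X}$ with deviation level $t = n(p+1-r)\sqrt u$, and then using $r \le \alpha p$, so that $p+1-r \ge (1-\alpha)p$, together with $\sqrt u \ge u$ for $0<u<1$, both arguments of the minimum in the exponent are at least an absolute constant times $(1-\alpha)\,npu$: the Frobenius term yields $t^2/(K^4\|\mathbf A_r\|_F^2) \ge (1-\alpha)npu/K^4$ and the operator term yields $t/(K^2\|\mathbf A_r\|) \ge (1-\alpha)npu/K^2$. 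This gives $\mathrm{Pr}[|\hat\sigma_r - \sigma_r| \ge \sqrt u] \le 2\exp(-(1-\alpha)\min\{1/(CK^4),1/(CK^2)\}\,npu)$ for an absolute constant $C$, and a union bound over the at most $\alpha p$ indices $r \le \alpha p$ yields the claim.

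The only step that is not completely routine is the invocation of the Hanson--Wright inequality under the convex concentration property (Adamczak) for the stacked vector $\widetilde{\mathbf X}$, together with the tensorization statement that $\widetilde{\mathbf X}$ inherits that property from the $\mathbf X^k$. Everything else --- the identification of $\hat\sigma_r - \sigma_r$ as a normalized centered quadratic form, the bounds $\|\mathbf A_r\| \le 1$ and $\|\mathbf A_r\|_F^2 \le n(p+1-r)$, and the arithmetic converting $r \le \alpha p$ and $u<1$ into the stated exponent --- is elementary.
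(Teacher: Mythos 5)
Your reduction to a quadratic form, the computations of $\|\mathbf A_r\|$ and $\|\mathbf A_r\|_F^2$, the substitution $t_0 = n(p+1-r)\sqrt u$, and the use of $p+1-r \ge (1-\alpha)p$ and $\sqrt u \ge u$ to reach the stated exponent, and the union bound over $r$, are all correct and are essentially equivalent (up to a harmless symmetrization of the mask) to the computations in the paper. The difference, and the place where your proposal has a genuine gap, is the step on which everything hinges: you apply the Hanson--Wright inequality \emph{once} to the stacked vector $\widetilde{\mathbf X}\in\R^{np}$, which requires that $\widetilde{\mathbf X}$ itself have the convex concentration property with a constant that is an absolute multiple of $K$. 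That tensorization is not a given, and the justification you sketch does not deliver it. Conditioning does show that the martingale increments $D_k = \expect[\phi\mid\mathbf X^1,\dots,\mathbf X^k]-\expect[\phi\mid\mathbf X^1,\dots,\mathbf X^{k-1}]$ are each (conditionally) sub-Gaussian with parameter $K$, because convexity and $1$-Lipschitzness survive partial expectation; but summing $n$ such increments via Azuma--Hoeffding yields a concentration constant of order $K\sqrt n$, not $O(K)$, which would destroy the bound entirely. The dimension-free tensorization one would need is Talagrand's convex distance inequality, but that is a theorem for a product of independent \emph{bounded} coordinates; here each block $\mathbf X^k$ has dependent coordinates and need not be bounded, so Talagrand does not apply off the shelf, and I am not aware of a general tensorization of the convex concentration property under these hypotheses. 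Unless you can point to a concrete result covering exactly the block-independent, unbounded, dependent-coordinate setting, this step is an unproved assumption.

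The paper's proof routes around precisely this obstruction. It applies Adamczak's Hanson--Wright inequality to \emph{each individual sample} $\mathbf X^k\in\R^p$ (where the convex concentration property is given), obtains a tail bound for each centered quadratic form $Z^r_k = \langle\mathbf M_r\mathbf X^k,\mathbf X^k\rangle - \expect\langle\mathbf M_r\mathbf X^k,\mathbf X^k\rangle$, integrates it to control moments, concludes $Z^r_k$ is sub-gamma, and then uses independence of the samples to multiply the moment generating functions, showing $\sum_k Z^r_k$ is sub-gamma with variance factor $n\nu$ and the same scale parameter. This per-sample plus MGF-aggregation route never needs the stacked vector to inherit the convex concentration property. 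If you want to rescue your stacked-vector argument, you would either need to restrict to the Gaussian case (where the concatenation is trivially Gaussian) or supply the tensorization lemma as a separate, carefully justified step; otherwise, the per-sample aggregation is the safer path.
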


\begin{proof}
	We proceed similar as in \cite{kabanava2017masked}. First note that, for all $k \in [n], r \in [\alpha p]$, we can write
	\begin{align} \label{JMeq:equivalence}
	\begin{split}
		|\hat{\sigma}_r - \sigma_r| 
		&= \frac{1}{(p+1)-r} \left| \sum_{j-i=r-1} \left( X_i^k X_j^k - \sigma_r \right) \right| \\
		&= \left| \langle \mathbf M_r \mathbf X^k,\mathbf X^k \rangle - \mathbb E [\langle \mathbf M_r \mathbf X^k,\mathbf X^k \rangle ] \right|,
	\end{split}
	\end{align}
	where the mask $\mathbf M_r$ is defined by $[M_r]_{i,j} = \tfrac{1}{(p+1)-r}$ if $j-i = r-1$ and $[M_r]_{i,j} = 0$ else, i.e., 
	only the $r$-th co-diagonal of $\mathbf M$ is non-zero.
    By using a version of the Hanson-Wright inequality for random vectors with the convex concentration property \cite{adamczak2015note}, we get that
	\begin{align*}
		\mathrm{Pr} \big[ \big| \underset{=: Z_i^r}{\underbrace{\langle \mathbf M_r \mathbf X^k,\mathbf X^k \rangle - \mathbb E [\langle \mathbf M_r \mathbf X^k,\mathbf X^k \rangle ]}} \big| \ge u \big] \le 2 e^{-\min \left\{ \frac{u^2}{CK^4 \| \mathbf M_r \|_{F}^2}, \frac{u}{CK^2 \| \mathbf M_r \| } \right\}},
	\end{align*}
	which, by integration, leads to
	\begin{align*}
		\mathbb E [|Z_i^r|^{2q}] 
		&\le 2q (2CK^4 \| \mathbf M_r \|_{F}^2)^q \Gamma(q) + 4q (CK^2 \| \mathbf M_r \| )^{2q} \Gamma(2q) \\
		&\le q! (4CK^4 \| \mathbf M_r \|_{F}^2)^q + (2q)! (2CK^2 \| \mathbf M_r \| )^{2q},
	\end{align*}
    for any $q \ge 1$. The random variables $Z_i^r$ are thus sub-gamma with variance $\nu = 16K^4 (C\| \mathbf M_r \|_{F}^2 + C^2\| \mathbf M_r \|^2)$ and scale parameter $\gamma = 2CK^2 \| \mathbf M_r \|^2$ \cite[Theorem 2.3]{boucheron2013concentration}. By independence, we get for all $0 < \mu < \tfrac{1}{\gamma}$ 
    \begin{align*}
    	\mathbb E \left[ e^{\mu \sum_{i=1}^n Z_i^r} \right]
    	= \prod_{i=1}^{n} \mathbb E [ e^{\mu Z_i^r} ]
    	\le e^{\frac{\mu^2 n \nu}{2(1-\gamma \mu)}}
    \end{align*}
    (and the same holds for $-Z_i^r$) such that $\sum_{i=1}^{n} Z_i^r$ is sub-gamma with variance factor $\nu n$ and scale parameter $\gamma$ \cite[Chapter 2.4]{boucheron2013concentration}. Consequently, 
	\begin{align*}
		& \mathrm{Pr} \left[  \left| \frac{1}{n} \sum_{i=1}^n Z_i^r \right| \ge C K^2 \left( \| \mathbf M_r \|_{F} \sqrt{\frac{u}{n}} + \| \mathbf M_r \| \frac{u}{n} \right) \right] \\
		&\le \mathrm{Pr} \left[  \left| \frac{1}{n} \sum_{i=1}^n Z_i^r \right| \ge \sqrt{2\nu n u} + \gamma u \right]
		\le 2 e^{-u},
	\end{align*}
    for any $u > 0$ \cite[Chapter 2.4]{boucheron2013concentration}. Recalling \eqref{JMeq:equivalence} and noting that $\| \mathbf M_r \|_{F}^2 = \| \mathbf M_r \| = \frac{1}{(p+1)-r}$ yields with the choice $u = \min\big\{ \tfrac{1}{C^2K^4},\tfrac{1}{CK^2} \big\} ((p+1)-r)n \tilde{u}$ that
	\begin{align*}
		\mathrm{Pr} \left[ |\tilde{\sigma}_r - \sigma_r| \ge 2\sqrt{\tilde{u}} \right] &\le \mathrm{Pr} \left[ |\tilde{\sigma}_r - \sigma_r| \ge \sqrt{\tilde{u}} + \tilde{u} \right] \\
		&\le 2e^{-\min\left\{ \frac{1}{C^2K^4},\frac{1}{CK^2} \right\} ((p+1)-r) n \tilde{u}}.
	\end{align*}
	A union bound over $r \in [\alpha p]$ and the bound $r \le \alpha p$ conclude the proof.
\end{proof}

The proof of Theorem \ref{JMthm:ToeplitzThresholding} now follows along the lines of \cite[Theorem 1]{bickel2008covariance}.

\begin{proof}[Proof of Theorem \ref{JMthm:ToeplitzThresholding}]
	Let us assume that $\boldsymbol \Sigma$ has a bandwidth of at most $\alpha p$, i.e., $\mathbb B_{\alpha p} (\boldsymbol \Sigma) = \boldsymbol \Sigma$ and $\supp(\boldsymbol \sigma) \subset [\alpha p]$. The general claim then follows from
	\begin{align*}
		\| \mathbb T_\tau ( \mathbb B_{\alpha p} (\hat{\boldsymbol \Sigma}_n^\text{Toep} )) - \boldsymbol \Sigma \| 
		\lesssim \| \mathbb T_\tau ( \mathbb B_{\alpha p} (\hat{\boldsymbol \Sigma}_n^\text{Toep} )) - \mathbb B_{\alpha p} (\boldsymbol \Sigma) \|  + \| \mathbb B_{\alpha p} (\boldsymbol \Sigma) - \boldsymbol \Sigma \|.
	\end{align*}
	By Lemma \ref{JMlem:1}, we get with probability at least $1 - (2\alpha p)^{-(c-1)}$ that
	\begin{align} \label{JMeq:maxBound}
		\max_{r \le \alpha p} |\hat{\sigma}_r - \sigma_r| \le \sqrt{\frac{c}{1-\alpha}} \max \{CK^2,\sqrt{C}K\} \sqrt{\frac{\log(p)}{np}},
	\end{align}
	where $c > 1$. For convenience, let us abbreviate $\tilde{\boldsymbol \Sigma} := \mathbb B_{\alpha p} (\hat{\boldsymbol \Sigma}_n^\text{Toep} )$ and denote its first column by $\tilde{\boldsymbol \sigma}$. We compute
	\begin{align*}
		\| \mathbb T_\tau ( \tilde{\boldsymbol \Sigma} ) - \boldsymbol \Sigma \|  
		\le \left\| \mathbb T_\tau ( \boldsymbol \Sigma ) - \boldsymbol \Sigma \right\|  
		+ \| \mathbb T_\tau ( \tilde{\boldsymbol \Sigma} ) - \mathbb T_\tau (\boldsymbol \Sigma) \|,
	\end{align*}
	where the elementary estimate
	\begin{align*}
		\sum_{j=1}^p |\Sigma_{i,j}| \chi_{\{ |\Sigma_{i,j}| \le \tau\}} = \sum_{j=1}^p |\Sigma_{i,j}|^q |\Sigma_{i,j}|^{1-q} \chi_{\{ |\Sigma_{i,j}| \le \tau\}}
		\le \tau^{1-q} \sum_{j=1}^p |\Sigma_{i,j}|^q
	\end{align*}
	yields
	\begin{align} \label{JMeq:bound}
		\left\| \mathbb T_\tau ( \boldsymbol \Sigma ) - \boldsymbol \Sigma \right\| 
		\le \max_i \sum_{j=1}^p |\Sigma_{i,j}| \chi_{\{ |\Sigma_{i,j}| \le \tau \}} \le \tau^{1-q} s
	\end{align}
    via Gershgorin's disc theorem. Moreover,
	\begin{align*}
	    \| \mathbb T_\tau ( \tilde{\boldsymbol \Sigma} ) - \mathbb T_\tau (\boldsymbol \Sigma) \| 
	    &\le  \max_i \sum_{j=1}^p |\tilde{\Sigma}_{i,j}| \chi_{\{ |\tilde{\Sigma}_{i,j}| \ge\tau,\; |\Sigma_{i,j}| <\tau\}} \\
	    &+ \max_i \sum_{j=1}^p |\Sigma_{i,j}| \chi_{\{ |\tilde{\Sigma}_{i,j}| <\tau,\; |\Sigma_{i,j}| \ge\tau\}} \\
	    &+ \max_i \sum_{j=1}^p |\tilde{\Sigma}_{i,j} - \Sigma_{i,j}| \chi_{\{ |\tilde{\Sigma}_{i,j}| \ge\tau,\; |\Sigma_{i,j}| \ge\tau\}} \\
	    &= (I) + (II) + (III).
	\end{align*}
	First recall that by assumption $\supp(\boldsymbol \sigma) \subset [\alpha p]$ and $\supp(\tilde{\boldsymbol \sigma}) \subset [\alpha p]$. Hence, using the observation that $\tilde{\sigma}_r = \hat{\sigma}_r$, for $r \le \alpha p$, and
	\begin{align} \label{JMeq:bound2}
	    \sum_{j=1}^p \chi_{\{ |\Sigma_{i,j}| \ge \tau \}}
	    = \sum_{j=1}^p \tau^q \tau^{-q} \chi_{\{ |\Sigma_{i,j}| \ge \tau\}}
	    \le \sum_{j=1}^p |\Sigma_{i,j}|^q \tau^{-q},
	\end{align}
	we may estimate with \eqref{JMeq:maxBound} and the definition \eqref{JMeq:TauRule} of $\tau$
	\begin{align*}
	    (III) &\le \max_{r\le \alpha p} |\hat{\sigma}_r - \sigma_r| \cdot \max_i \sum_{j=1}^p |\Sigma_{i,j}|^q \tau^{-q} \\
	    &\le s \tau^{-q} \sqrt{\frac{c}{1-\alpha}} \max \{CK^2,\sqrt{C}K\} \sqrt{\frac{\log(p)}{np}} \\
	    &\le s \tau^{1-q}. 
	\end{align*}
    Furthermore,
    \begin{align*}
    	(I) &\le \max_i \sum_{j=1}^p |\tilde{\Sigma}_{i,j} - \Sigma_{i,j}| \chi_{\{ |\tilde{\Sigma}_{i,j}| \ge \tau,\; |\Sigma_{i,j}| < \tau\}} 
    	+ \max_i \sum_{j=1}^p |\Sigma_{i,j}| \chi_{\{ |\tilde{\Sigma}_{i,j}| \ge \tau,\; |\Sigma_{i,j}| < \tau\}} \\
    	&= (IV) + (V).
    \end{align*}
    By \eqref{JMeq:bound}, we know that
    \begin{align*}
    	(V) \le \tau^{1-q} s.
    \end{align*}
    Now take $\gamma \in (0,1)$. We get that
    \begin{align*}
    	(IV) &\le \max_i \sum_{j=1}^p |\tilde{\Sigma}_{i,j} - \Sigma_{i,j}| \chi_{\{ |\tilde{\Sigma}_{i,j}| \ge \tau,\; |\Sigma_{i,j}| < \gamma \tau\}} 
    	+ \max_i \sum_{j=1}^p |\tilde{\Sigma}_{i,j} - \Sigma_{i,j}| \chi_{\{ |\tilde{\Sigma}_{i,j}| \ge \tau,\; \gamma \tau \le |\Sigma_{i,j}| < \tau\}} \\
    	&\le \max_{r \le \alpha p} |\tilde{\sigma}_r - \sigma_r| \cdot \max_i N_i(1-\gamma) \\
    	&+ s (\gamma \tau)^{-q} \sqrt{\frac{c}{1-\alpha}} \max \{CK^2,\sqrt{C}K\} \sqrt{\frac{\log(p)}{np}}, 
    \end{align*}
    where we defined $N_i(1-\gamma) := \sum_{j=1}^p \chi_{\{ |\tilde{\Sigma}_{i,j} - \Sigma_{i,j}| > (1-\gamma) \tau\}}$ and re-used the bound on $(III)$ for the second term. Since we have by \eqref{JMeq:maxBound} and the definition \eqref{JMeq:TauRule} of $\tau$ that $N_i(1-\gamma) = 0$, for $i \in [p]$ and $\gamma$ with $(1-\gamma)\sqrt{2} \ge 1$,
    we get that
    \begin{align*}
    	(IV) \lesssim s \tau^{-q} \sqrt{\frac{c}{1-\alpha}} \max \{CK^2,\sqrt{C}K\} \sqrt{\frac{\log(p)}{np}}.
    \end{align*}
    Hence,
    \begin{align*}
    	(I) \lesssim s \tau^{1-q}.
    \end{align*}
    Finally, note that by \eqref{JMeq:bound2}
    \begin{align*}
    	(II) &\le \max_i \sum_{j=1}^p (|\tilde{\Sigma}_{i,j} - \Sigma_{i,j}| + |\tilde{\Sigma}_{i,j}|) \chi_{\{ |\tilde{\Sigma}_{i,j}| < \tau,\; |\Sigma_{i,j}| \ge \tau\}} \\
    	&\le \max_{r \le \alpha p} |\tilde{\sigma}_r - \sigma_r| \cdot \max_i \sum_{j=1}^p \chi_{\{ |\Sigma_{i,j}| \ge \tau\}} + \tau \max_i \sum_{j=1}^p \chi_{\{ |\Sigma_{i,j}| \ge \tau\}} \\
    	&\le s \tau^{-q} \sqrt{\frac{c}{1-\alpha}} \max \{CK^2,\sqrt{C}K\} \sqrt{\frac{\log(p)}{np}} + s \tau^{1-q} \\
    	&\lesssim s \tau^{1-q}.
    \end{align*}
    Combining the bounds for $(I)$, $(II)$, and $(III)$ with the explicit form of $\tau$ yields the claim.
\end{proof}

\paragraph{Combining Toeplitz structure and low-rankness.} Sparsity is not the only structure that can be imposed on Toeplitz matrices. For instance, in Massive MIMO, cf.\ Section \ref{JMsec:MassiveMIMO}, low-rankness of $\boldsymbol \Sigma$ may naturally be assumed in addition to Toeplitz structure \cite{haghighatshoar2018low}. The recent works \cite{eldar2020sample,lawrence2020low} propose several algorithms to estimate low-rank Toeplitz covariance matrices from partial observations by a technique called "sparse ruler". In particular, the authors can show that the sufficient number of samples to approximate $\boldsymbol \Sigma$ scales (up to $\log$-factors) polynomial in the (effective) rank of $\boldsymbol \Sigma$.


\section{Estimation from quantized samples}
\label{JMsec:OneBitCE}

All above results assume real-valued sample vectors $\mathbf X^k$, i.e., infinite precision representation of the samples.
In applications, this assumption is hardly fulfilled. Especially in signal processing, samples are collected via sensors and, hence, need to be quantized to finitely many bits before they can be digitally transmitted and further processed. Engineers have been examining the influence of coarse quantization on correlation and covariance estimation for decades, e.g., \cite{bar2002doa,choi2016near,jacovitti1994estimation,li2017channel,roth2015covariance}. However, in contrast to classical covariance estimation from un-quantized samples, so far only asymptotic estimation guarantees have been derived in the quantized setting. To improve our understanding on the effect of quantization on covariance estimation, we analyzed two memoryless one-bit quantization schemes in our recent work \cite{dirksen2021covariance}. We call a quantizer memoryless if it quantizes each entry of $\mathbf X^k$ independently of all remaining entries. This is fundamentally different from feedback systems, e.g., $\Sigma\Delta$-quantization \cite{schreier1996delta,benedetto2006sigma}, and of particular interest for large-scale applications like Massive MIMO where the entries of $\mathbf X^k$ correspond to inputs from different antennas, cf.\ Section \ref{JMsec:MassiveMIMO}. We conclude by providing a detailed discussion of the models and results in \cite{dirksen2021covariance}.

\subsection{Sign quantization}
\label{JMsec:OneBitSign}

In the first setting, we assume to receive one-bit quantized samples
\begin{align} \label{JMeq:OneBitModel}
	\mathrm{sign}(\mathbf X^k) \in \{-1,1\}^p,
\end{align}
for $k \in [n]$, instead of $\mathbf X^k$ itself. (Recall that we apply scalar functions like sign entry-wise to vectors and matrices.) Since the quantizer $\mathrm{sign}$ is scale-invariant, i.e., $\mathrm{sign}(\mathbf z) = \mathrm{sign}(\mathbf D\mathbf z)$ for any diagonal matrix $\mathbf D\in \R^{p\times p}$ with strictly positive entries and $\mathbf z \in \R^p$, we only hope to recover the correlation matrix of the distribution, i.e., a normalized version of $\boldsymbol \Sigma$ with entries $\big[ \tfrac{\Sigma_{i,j}}{\sqrt{\Sigma_{i,i}} \sqrt{\Sigma_{j,j}}} \big]_{i,j}$. We thus assume that $\mathbf X \sim \mathcal{N}(\boldsymbol 0,\boldsymbol \Sigma)$, where $\boldsymbol \Sigma$ has ones on its diagonal.

It is common knowledge that
\begin{align} \label{JMeq:OneBitEstimator}
	\tilde{\boldsymbol \Sigma}_n = \sin \left( \frac{\pi}{2n} \sum_{k=1}^n \mathrm{sign} (\mathbf X^k) \mathrm{sign}(\mathbf X^k)^\top \right)
\end{align}
is well-suited to approximate $\boldsymbol \Sigma$ from the quantized samples, cf.\ \cite{jacovitti1994estimation}.
Note that the specific form of $\tilde{\boldsymbol \Sigma}_n$ is motivated by Grothendieck's identity (see, e.g., \cite[Lemma 3.6.6]{vershynin2018high}), also known as "arcsin-law" in the engineering literature \cite{jacovitti1994estimation,van1966spectrum}, which implies that    
\begin{align} \label{JMeq:Grothendieck}
	\boldsymbol \Gamma := \mathbb E [\mathrm{sign} (\mathbf X^k) \mathrm{sign} (\mathbf X^k)^\top ] = \frac{2}{\pi} \arcsin (\boldsymbol \Sigma)
\end{align}
if $\mathbf X \sim \mathcal N (\boldsymbol 0,\boldsymbol \Sigma)$. Applying the strong law of large numbers and the continuity of the sine function to \eqref{JMeq:OneBitEstimator} one easily obtains with \eqref{JMeq:Grothendieck} that $\tilde{\boldsymbol \Sigma}_n$ is a consistent estimator of $\boldsymbol \Sigma$. 

The two key quantities for understanding the non-asymptotic performance of $\tilde{\boldsymbol \Sigma}_n$ are $\boldsymbol \Gamma$ and
\begin{align*}
	\mathbf A := \cos(\arcsin(\boldsymbol \Sigma)) = \cos( \tfrac{\pi}{2} \boldsymbol \Gamma ).
\end{align*}
Furthermore, we define
\begin{align*}
	\sigma(\mathbf Z)^2 := \mathbf Z^2 \odot \boldsymbol \Gamma - ( \mathbf Z \odot \boldsymbol \Gamma )^2  = \frac{2}{\pi} \mathbf Z^2 \odot \arcsin(\boldsymbol \Sigma) - \frac{4}{\pi^2} \big( \mathbf Z \odot \arcsin(\boldsymbol \Sigma) \big)^2,
\end{align*}
for symmetric $\mathbf Z \in \R^{p\times p}$.

\begin{theorem}[{\cite[Theorem 1]{dirksen2021covariance}}] \label{JMthm:Operator}
	There exist constants $c_1,c_2 > 0$ such that the following holds. Let $\mathbf X \sim \mathcal N(\boldsymbol 0,\boldsymbol \Sigma)$ with $\Sigma_{i,i} = 1$, for $i \in [p]$, and $\mathbf X^1,...,\mathbf X^n \overset{\mathrm{d}}{\sim} \mathbf X$ be i.i.d.\ samples of $\mathbf X$. Let $\mathbf M \in [0,1]^{p\times p}$ be a fixed symmetric mask. Then, for all $t \ge 0$ with $n \ge c_1 \log^3(p) (\log(p) + t)$, the biased sign estimator $\tilde{\boldsymbol \Sigma}_n$ fulfills with probability at least $1 - 2e^{-c_2 t}$
	\begin{align}
		\label{JMeqn:OperatorEst}
		\begin{split}
			\| \mathbf M \odot \tilde{\boldsymbol \Sigma}_n - \mathbf M \odot  \boldsymbol \Sigma \| 
			& \lesssim  \left\| \sigma \left(\mathbf M \odot \mathbf A \right)\right\|  \sqrt{\frac{\log(p) + t}{n}} \\
			& \qquad \qquad + \left( \max\left\{ \| \mathbf M \odot \mathbf A \|, \| \mathbf M \odot \boldsymbol \Sigma \| \right\} \right) \frac{\log(p) + t}{n}.
		\end{split}
	\end{align}
\end{theorem}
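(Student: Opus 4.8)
The plan is to reduce the whole statement to a concentration bound for the empirical \emph{sign covariance} $\hat{\boldsymbol\Gamma}_n := \frac1n\sum_{k=1}^n \mathrm{sign}(\mathbf X^k)\mathrm{sign}(\mathbf X^k)^\top$ around $\boldsymbol\Gamma = \frac2\pi\arcsin(\boldsymbol\Sigma)$, and then to transport this bound through the entrywise sine. By Grothendieck's identity \eqref{JMeq:Grothendieck} we have $\boldsymbol\Sigma=\sin(\frac\pi2\boldsymbol\Gamma)$ and $\tilde{\boldsymbol\Sigma}_n=\sin(\frac\pi2\hat{\boldsymbol\Gamma}_n)$, both sines applied entrywise, and $\boldsymbol\Gamma$ is positive semidefinite with unit diagonal (it is the covariance of $\mathrm{sign}(\mathbf X)$ and $\Gamma_{ii}=\frac2\pi\arcsin(1)=1$). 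Writing $\boldsymbol\Delta:=\hat{\boldsymbol\Gamma}_n-\boldsymbol\Gamma$ and expanding $\sin(\frac\pi2\hat\Gamma_{ij})-\sin(\frac\pi2\Gamma_{ij})$ with an integral remainder, one obtains the \emph{exact} decomposition
\[
  \mathbf M\odot\tilde{\boldsymbol\Sigma}_n - \mathbf M\odot\boldsymbol\Sigma
  \;=\; \tfrac\pi2\,(\mathbf M\odot\mathbf A)\odot\boldsymbol\Delta \;+\; \mathbf M\odot\mathbf R,
\]
where the first-order coefficient is exactly $\cos(\frac\pi2\boldsymbol\Gamma)=\mathbf A$, and the remainder has entries $|R_{ij}|\lesssim(|A_{ij}|+|\Sigma_{ij}|)\,\Delta_{ij}^2+\text{(higher order in }\Delta_{ij})$; note that the quadratic term carries the factor $\sin(\frac\pi2\Gamma_{ij})=\Sigma_{ij}$, which is exactly why $\|\mathbf M\odot\boldsymbol\Sigma\|$ appears in \eqref{JMeqn:OperatorEst}.

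For the linear term I would use the matrix Bernstein inequality. The algebraic point is that Hadamard-multiplying a fixed matrix by a sign outer product is a congruence: $(\mathbf M\odot\mathbf A)\odot(\mathbf s\mathbf s^\top)=\mathbf D_{\mathbf s}(\mathbf M\odot\mathbf A)\mathbf D_{\mathbf s}$ with $\mathbf D_{\mathbf s}=\mathrm{diag}(\mathbf s)$ and $\mathbf D_{\mathbf s}^2=\mathbf I$. Hence $\tfrac\pi2(\mathbf M\odot\mathbf A)\odot\boldsymbol\Delta=\tfrac1n\sum_k\mathbf W_k$ with $\mathbf W_k:=\tfrac\pi2\bigl(\mathbf D_{\mathbf s^k}(\mathbf M\odot\mathbf A)\mathbf D_{\mathbf s^k}-(\mathbf M\odot\mathbf A)\odot\boldsymbol\Gamma\bigr)$ i.i.d., symmetric, mean zero. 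Using the Schur-multiplier bound $\|\mathbf B\odot\boldsymbol\Gamma\|\le(\max_i\Gamma_{ii})\|\mathbf B\|=\|\mathbf B\|$ for the positive semidefinite $\boldsymbol\Gamma$, one gets the a.s.\ bound $\|\mathbf W_k\|\lesssim\|\mathbf M\odot\mathbf A\|$; and a two-line computation with $\mathbf D_{\mathbf s}^2=\mathbf I$ gives $\mathbb E[\mathbf W_k^2]=\tfrac{\pi^2}{4}\bigl((\mathbf M\odot\mathbf A)^2\odot\boldsymbol\Gamma-((\mathbf M\odot\mathbf A)\odot\boldsymbol\Gamma)^2\bigr)=\tfrac{\pi^2}{4}\,\sigma(\mathbf M\odot\mathbf A)^2$, precisely the variance proxy defined before the theorem. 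Feeding $\|\sum_k\mathbb E\mathbf W_k^2\|\lesssim n\|\sigma(\mathbf M\odot\mathbf A)\|^2$ and the a.s.\ bound into matrix Bernstein and running it at confidence level $\log(p)+t$ (the factor $p$ in the matrix Bernstein tail being killed by $e^{-\log p}$) produces exactly the first summand of \eqref{JMeqn:OperatorEst} together with the $\|\mathbf M\odot\mathbf A\|\tfrac{\log(p)+t}{n}$ part of the second.

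What is left is $\|\mathbf M\odot\mathbf R\|$, and this is the main obstacle. First I would pass to the event $\mathcal E:=\{\|\boldsymbol\Delta\|_\infty\le\delta\}$ with $\delta\lesssim\sqrt{(\log(p)+t)/n}+(\log(p)+t)/n$, which holds with probability $\ge1-2e^{-t}$ by a scalar Bernstein bound for each of the $\le p^2$ bounded averages $\hat\Gamma_{ij}$ and a union bound; the hypothesis $n\gtrsim\log^3(p)(\log(p)+t)$ then forces $\delta$ to be (poly-logarithmically) small, which in particular makes the cubic-and-higher Taylor tail of $\sin$ negligible compared to the quadratic term. On $\mathcal E$ the entries obey $|R_{ij}|\lesssim\delta\,(|A_{ij}|+|\Sigma_{ij}|)\,|\Delta_{ij}|$, so $\mathbf M\odot\mathbf R$ splits into $(\mathbf M\odot\mathbf A)\odot\mathbf S+(\mathbf M\odot\boldsymbol\Sigma)\odot\mathbf S'$ with $|S_{ij}|,|S'_{ij}|\lesssim\delta|\Delta_{ij}|$. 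The subtlety is that a crude entrywise operator-norm bound is useless here (for $\boldsymbol\Sigma=\mathbf I$, $\mathbf M=\boldsymbol 1$ it would yield order $p/n$ instead of $(\log p)/n$): one must retain $\mathbf M\odot\mathbf A$ and $\mathbf M\odot\boldsymbol\Sigma$ as honest factors and invoke a Schur-multiplier / column-$\ell_2$-norm bound for the Hadamard product in order to pull out the factor $\delta$ and land back on the already-controlled quantities $\|(\mathbf M\odot\mathbf A)\odot\boldsymbol\Delta\|$ and $\|(\mathbf M\odot\boldsymbol\Sigma)\odot\boldsymbol\Delta\|$ (the latter bounded by the same matrix-Bernstein argument as above, with $\mathbf A$ replaced by $\boldsymbol\Sigma$). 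Carrying out this Schur-multiplier step so that it outputs exactly $\max\{\|\mathbf M\odot\mathbf A\|,\|\mathbf M\odot\boldsymbol\Sigma\|\}\tfrac{\log(p)+t}{n}$ without an extra logarithmic loss, and accounting for the full Taylor tail rather than just the leading quadratic correction, is where the real work lies; once that is done, combining the linear piece, the remainder piece, and the union bound finishes the proof.
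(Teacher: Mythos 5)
Your decomposition of the error into the linear term $\tfrac{\pi}{2}(\mathbf M\odot\mathbf A)\odot\boldsymbol\Delta$ plus a remainder is the right framework, and your treatment of the linear term is correct and essentially the same as the paper's: the congruence $(\mathbf M\odot\mathbf A)\odot(\mathbf s\mathbf s^\top)=\mathbf D_{\mathbf s}(\mathbf M\odot\mathbf A)\mathbf D_{\mathbf s}$, the almost-sure bound $\|\mathbf W_k\|\lesssim\|\mathbf M\odot\mathbf A\|$ via the Schur-multiplier bound for the PSD matrix $\boldsymbol\Gamma$ with unit diagonal, and the variance computation yielding exactly $\sigma(\mathbf M\odot\mathbf A)^2$ all check out, and matrix Bernstein then produces the two summands $\|\sigma(\mathbf M\odot\mathbf A)\|\sqrt{(\log p + t)/n}$ and $\|\mathbf M\odot\mathbf A\|(\log p + t)/n$.

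The genuine gap is the remainder, and the step you sketch to close it does not work. You propose to write $\mathbf M\odot\mathbf R=(\mathbf M\odot\mathbf A)\odot\mathbf S+(\mathbf M\odot\boldsymbol\Sigma)\odot\mathbf S'$ with entrywise bounds $|S_{ij}|,|S'_{ij}|\lesssim\delta|\Delta_{ij}|$ and then to ``pull out the factor $\delta$'' via a Schur-multiplier or column-$\ell_2$-norm argument to land on $\|(\mathbf M\odot\mathbf A)\odot\boldsymbol\Delta\|$. But entrywise domination does not control the operator norm: $|S_{ij}|\le\delta|\Delta_{ij}|$ for all $i,j$ does \emph{not} imply $\|(\mathbf M\odot\mathbf A)\odot\mathbf S\|\lesssim\delta\|(\mathbf M\odot\mathbf A)\odot\boldsymbol\Delta\|$, and the Schur-multiplier inequality you used earlier requires a \emph{fixed PSD} matrix with small diagonal as the multiplier, which $\mathbf S/\delta$ is not. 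The remainder $R_{ij}$ is a nonlinear function of $\Delta_{ij}$ whose sign pattern need not align with $\Delta_{ij}$, so there is no operator-norm comparison to lean on. Your motivating example is also off: for $\boldsymbol\Sigma=\mathbf I$, $\mathbf M=\boldsymbol 1$ one has $\mathbf A=\boldsymbol 1 -\mathbf I$ and hence $\|\mathbf M\odot\mathbf A\|\approx p$, so the theorem's remainder budget is $\approx p(\log p+t)/n$ and the ``crude'' Gershgorin bound $\|\mathbf M\odot\mathbf R\|\le\max_i\sum_j M_{ij}\Delta_{ij}^2\lesssim p\delta^2\approx p(\log p+t)/n$ is actually adequate there. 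The case where the crude bound genuinely fails is a sparse/banded mask $\mathbf M$ with $\|\mathbf M\odot\mathbf A\|\ll p$ while the Gershgorin row sum is still of order $p\delta^2$; that is the situation your argument would need to (and does not) resolve. Finally, you do not account for the specific $\log^3(p)$ factor in the sample-size hypothesis: the $\|\boldsymbol\Delta\|_\infty\le\delta$ union bound alone only requires $n\gtrsim\log p + t$, so the extra logarithmic powers signal a more involved estimate (higher moments or chaining on the quadratic-in-$\boldsymbol\Delta$ piece) that is precisely the part left open.
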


The estimate in Theorem \ref{JMthm:Operator} (for convenience, we only consider the case $\mathbf M = \boldsymbol 1$ here) can be simplified \cite[Remark 3]{dirksen2021covariance} to
\begin{align*}
	\| \tilde{\boldsymbol \Sigma}_n - \boldsymbol \Sigma \| 
	\lesssim \max  \{  \| \cos(\arcsin(\boldsymbol \Sigma)) \|,\| \boldsymbol \Sigma \| \} \left( \sqrt{ \frac{\log(p) + t}{n}} + \frac{\log(p) + t}{n} \right)
\end{align*}
which is up to the additional dependence on $\cos(\arcsin(\boldsymbol \Sigma))$ comparable to the error bound in \eqref{JMeq:Estimation_Gauss} for $\hat{ \boldsymbol \Sigma }_n$. This is remarkable since $\tilde{\boldsymbol \Sigma}_n$ accesses considerably less information on the samples than $\hat{ \boldsymbol \Sigma }_n$.

Theorem \ref{JMthm:Operator} even suggests that for strongly correlated distributions of $\mathbf X$, i.e., $\boldsymbol \Sigma \approx \boldsymbol 1$, the dominant first term on the right-hand side of \eqref{JMeqn:OperatorEst} vanishes. In other words, the bound in \eqref{JMeqn:OperatorEst} predicts $\tilde{\boldsymbol \Sigma}_n$ to outperform $\hat{\boldsymbol \Sigma}_n$ if the entries of $\mathbf X$ strongly correlate. Numerical experiments confirm this counter-intuitive fact, cf.\ Figure \ref{JMfig:CorrelationFigure}. A possible explanation is that by construction $\tilde{\boldsymbol \Sigma}_n$ implicitly uses the assumption that $\boldsymbol \Sigma$ has ones on its diagonal which is not provided to $\hat{\boldsymbol \Sigma}_n$.

\begin{figure}[t]
	\centering
	\includegraphics[scale=0.49]{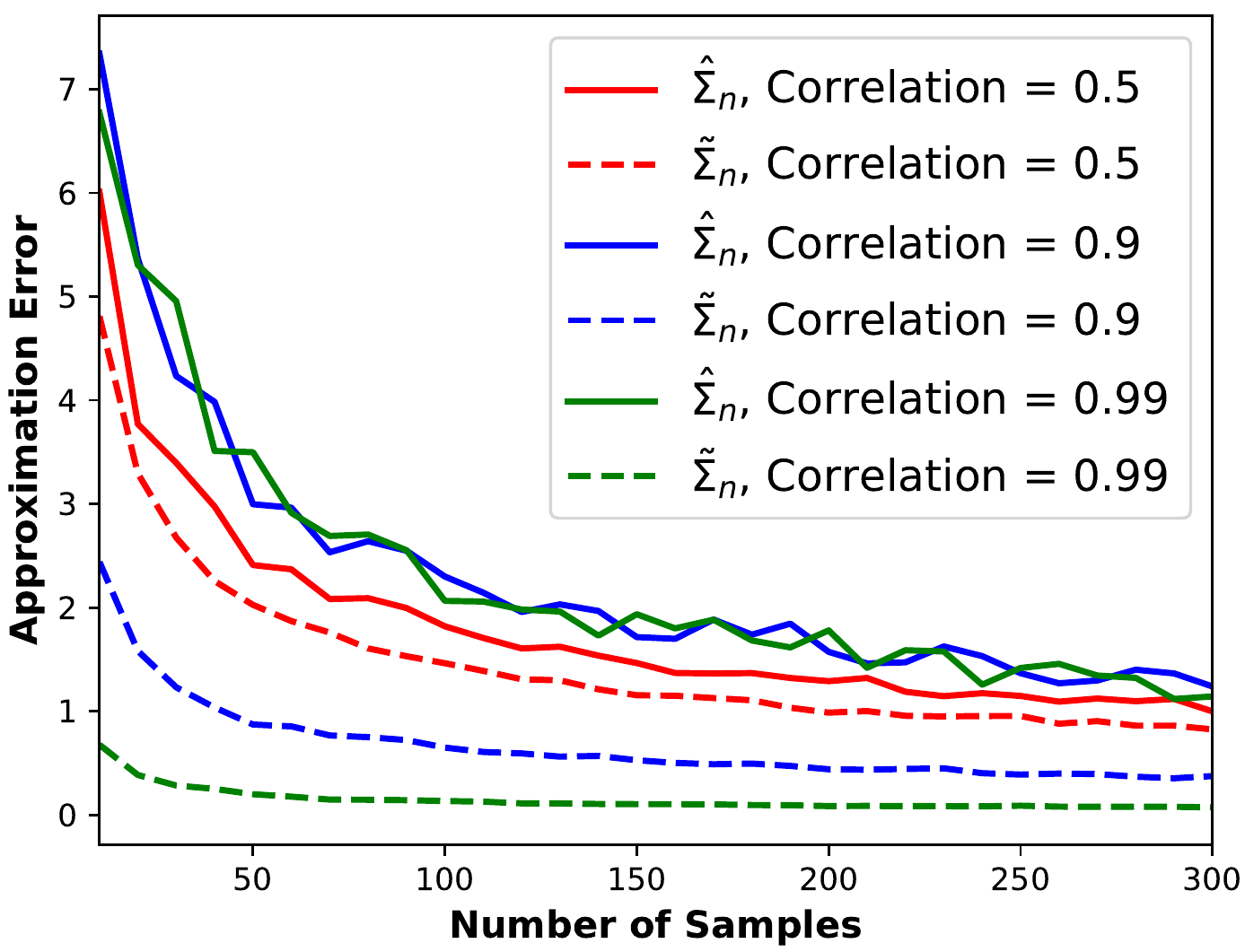}
	%
	%
	\caption{The experiment from \protect\cite{dirksen2021covariance} depicts average estimation error of $\hat{\boldsymbol \Sigma}_n$ and $\tilde{\boldsymbol \Sigma}_n$ in operator norm, for $p = 20$, $n$ varying from $10$ to $300$ and three different choices of the ground-truth $\boldsymbol \Sigma$ with ones on the diagonal and off-diagonal entries equal to $c = 0.5$, $c = 0.9$, and $c = 0.99$.}
	\label{JMfig:CorrelationFigure}       
\end{figure}

Furthermore, a corresponding lower bound on the second moment of the estimation error shows that the unconventional term $\| \sigma ( \mathbf M \odot \mathbf A ) \|$ is factual and not an artifact of the proof.

\begin{proposition}[{\cite[Proposition 15]{dirksen2021covariance}}] \label{JMprop:LB}
	There exist constants $c_1,c_2 > 0$ such that the following holds. Let $\mathbf X \sim \mathcal N(\boldsymbol 0,\boldsymbol \Sigma)$ with $\Sigma_{i,i} = 1$, for $i \in [p]$, $\mathbf X^1,...,\mathbf X^n \overset{\mathrm{d}}{\sim} \mathbf X$ are i.i.d.\ samples of $\mathbf X$, and $\mathbf M \in [0,1]^{p\times p}$ is a fixed symmetric mask. If $n \ge c_1 \log(p)$, we have that
	\begin{align*}
		\mathbb E \left[ \| \mathbf M \odot \tilde{\boldsymbol \Sigma}_n - \mathbf M \odot \boldsymbol \Sigma \|^2 \right]^{\frac{1}{2}} 
		&\gtrsim \frac{c_2}{\sqrt{n}} \left\|\sigma \left( \mathbf M \odot \mathbf A \right) \right\| 
		+ \frac{c_2}{n} \left\| \mathbf M \odot \boldsymbol \Sigma \odot \left( \boldsymbol{1} - \boldsymbol \Gamma^{\odot 2}  \right) \right\| \\
		&  + \frac{c_2}{ n } \| \sigma ( \mathbf M \odot \boldsymbol \Sigma)^2 \odot \boldsymbol \Gamma \|^\frac{1}{2} - \mathcal{O} \left( \left( \frac{\log^2(p)}{n} \right)^{\frac{3}{2}} \right).
	\end{align*}
\end{proposition}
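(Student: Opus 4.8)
The plan is to linearize the biased sign estimator through a second-order Taylor expansion of the entrywise sine and to read the three contributions off the expansion. Write $\hat{\boldsymbol\Gamma}_n := \tfrac1n\sum_{k=1}^n \mathrm{sign}(\mathbf X^k)\mathrm{sign}(\mathbf X^k)^\top$, so that $\tilde{\boldsymbol\Sigma}_n = \sin(\tfrac\pi2\hat{\boldsymbol\Gamma}_n)$, $\mathbb E\hat{\boldsymbol\Gamma}_n = \boldsymbol\Gamma$ by \eqref{JMeq:Grothendieck}, and $\sin(\tfrac\pi2\boldsymbol\Gamma) = \boldsymbol\Sigma$. Put $\mathbf D_n := \hat{\boldsymbol\Gamma}_n - \boldsymbol\Gamma = \tfrac1n\sum_k \mathbf W^k$ with $\mathbf W^k := \mathrm{sign}(\mathbf X^k)\mathrm{sign}(\mathbf X^k)^\top - \boldsymbol\Gamma$. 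Expanding $\sin(\tfrac\pi2\cdot)$ around each $\Gamma_{ij}$ (with integral remainder) and using $\sin(\tfrac\pi2\Gamma_{ij})=\Sigma_{ij}$, $\cos(\tfrac\pi2\Gamma_{ij})=A_{ij}$ yields
\[
\mathbf E_n := \mathbf M\odot\tilde{\boldsymbol\Sigma}_n - \mathbf M\odot\boldsymbol\Sigma = \underbrace{\tfrac{\pi}{2}(\mathbf M\odot\mathbf A)\odot\mathbf D_n}_{=:\,\mathbf L_n} \;-\; \underbrace{\tfrac{\pi^2}{8}(\mathbf M\odot\boldsymbol\Sigma)\odot\mathbf D_n^{\odot 2}}_{=:\,\mathbf Q_n} \;+\; \mathbf R_n,
\]
where $|(\mathbf R_n)_{ij}|\lesssim |(\mathbf D_n)_{ij}|^3$. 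Since $\mathbf E_n$ is symmetric, $\|\mathbf E_n\|^2=\|\mathbf E_n^2\|$ with $\mathbf E_n^2\succeq 0$, so Jensen gives $\mathbb E\,\|\mathbf E_n\|^2 = \mathbb E\,\|\mathbf E_n^2\|\ge\|\mathbb E[\mathbf E_n^2]\|$; in parallel $\mathbb E\,\|\mathbf E_n\|^2\ge\sup_{\|\mathbf u\|_2=\|\mathbf v\|_2=1}\mathbb E[(\mathbf u^\top\mathbf E_n\mathbf v)^2]$. I would isolate each target term from one of these inequalities with a test direction tailored to it.

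The algebraic input consists of three moment identities stemming from the $\{-1,1\}$-valuedness of $\mathrm{sign}(\mathbf X^k)$ and \eqref{JMeq:Grothendieck}: (i) for symmetric $\mathbf Z$, $\mathbb E[(\mathbf Z\odot\mathbf W^k)^2]=\mathbf Z^2\odot\boldsymbol\Gamma-(\mathbf Z\odot\boldsymbol\Gamma)^2=\sigma(\mathbf Z)^2$ (the inner index collapses via $\mathrm{sign}(X_i^k)^2=1$), hence $\mathbb E[\mathbf L_n^2]=\tfrac{\pi^2}{4n}\sigma(\mathbf M\odot\mathbf A)^2\succeq0$; (ii) $\mathbb E[\mathbf D_n^{\odot2}]=\tfrac1n(\boldsymbol{1}-\boldsymbol\Gamma^{\odot2})$, since $\mathrm{Var}(\mathrm{sign}(X_i^k)\mathrm{sign}(X_j^k))=1-\Gamma_{ij}^2$, hence $\mathbb E[\mathbf Q_n]=\tfrac{\pi^2}{8n}(\mathbf M\odot\boldsymbol\Sigma)\odot(\boldsymbol{1}-\boldsymbol\Gamma^{\odot2})$; (iii) splitting $\mathbf D_n^{\odot2}=\tfrac1{n^2}\sum_k(\mathbf W^k)^{\odot2}+\tfrac1{n^2}\sum_{k\neq\ell}\mathbf W^k\odot\mathbf W^\ell$, the first sum concentrates around $\mathbb E[\mathbf Q_n]$ while the second is a centered, completely degenerate order-two $U$-statistic whose matrix second moment, via $\mathbb E[W_{ij}^k W_{j\ell}^k]=\Gamma_{i\ell}-\Gamma_{ij}\Gamma_{j\ell}$, equals $\tfrac{c}{n^2}$ times a matrix that bounds below a constant multiple of $\sigma(\mathbf M\odot\boldsymbol\Sigma)^2\odot\boldsymbol\Gamma$.

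Feeding these in: choosing $\mathbf v$ a top eigenvector of $\sigma(\mathbf M\odot\mathbf A)^2$, the positive-semidefinite summand $\mathbb E[\mathbf L_n^2]$ inside $\mathbb E[\mathbf E_n^2]$ produces a contribution of order $\tfrac1n\|\sigma(\mathbf M\odot\mathbf A)\|^2$ — after absorbing the indefinite cross terms $\mathbf L_n\mathbf Q_n+\mathbf Q_n\mathbf L_n$ with the bound $\mathbf X\mathbf Y+\mathbf Y\mathbf X\succeq-\mathbf X^2-\mathbf Y^2$, at the cost of the lower-order matrix $\mathbb E[(\mathbf Q_n+\mathbf R_n)^2]$ — giving the first term; choosing $\mathbf u=\mathbf v$ a top eigenvector of $\mathbb E[\mathbf Q_n]$ and using $\mathbb E[(\mathbf u^\top\mathbf E_n\mathbf u)^2]\ge(\mathbb E[\mathbf u^\top\mathbf E_n\mathbf u])^2\ge(\|\mathbb E\mathbf Q_n\|-\text{error})^2$ gives the second; and the $U$-statistic second moment from (iii) — through $\|\mathbb E[\mathbf E_n^2]\|$ along a top eigenvector of $\sigma(\mathbf M\odot\boldsymbol\Sigma)^2\odot\boldsymbol\Gamma$, or through $\mathrm{Var}(\mathbf u^\top\mathbf E_n\mathbf v)$ — gives the third. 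Each of the three lower bounds dominates $\mathbb E\,\|\mathbf E_n\|^2$ on its own, so $\mathbb E\,\|\mathbf E_n\|^2$ exceeds their maximum and hence, all being nonnegative, a constant multiple of their sum; square rooting and using $\sqrt{a^2+b^2+c^2}\ge\tfrac1{\sqrt3}(a+b+c)$ yields the stated additive bound, with universal constants absorbed into $c_2$.

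The real work, and the obstacles, are then twofold. First, one must check that the three extractions are non-competing: the crucial cancellation is $\mathbb E[\mathbf L_n\mathbf Q_n^{U}]=0$ because the $U$-statistic factor is Hadamard-linear in an independent copy of $\mathbf W$, while $\mathbb E[\mathbf L_n\mathbf Q_n^{\mathrm{diag}}]$, $\mathbb E[(\mathbf Q_n^{\mathrm{diag}}-\mathbb E\mathbf Q_n)^2]$, and the cross terms with $\mathbf R_n$ are of strictly smaller order. Second, the cubic remainder must be controlled: a Hoeffding bound plus a union bound over the $p^2$ entries gives $\|\mathbf D_n\|_\infty\lesssim\log(p)/\sqrt n$ with overwhelming probability, and combined with the sixth-moment bound $\mathbb E[(\mathbf D_n)_{ij}^6]\lesssim n^{-3}$ and the correlation-matrix structure of $\hat{\boldsymbol\Gamma}_n$ this routine (but careful) bookkeeping produces exactly the $\mathcal O((\log^2(p)/n)^{3/2})$ loss. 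I expect the hardest single step to be (iii): computing the degenerate $U$-statistic's matrix second moment and showing it dominates a constant times $\sigma(\mathbf M\odot\boldsymbol\Sigma)^2\odot\boldsymbol\Gamma$ rather than merely agreeing with it up to terms of uncertain sign — this needs expanding $(\Gamma_{i\ell}-\Gamma_{ij}\Gamma_{j\ell})^2$, tracking the several rank-structured matrices that appear, and invoking the Schur product theorem to keep the dominant pieces positive semidefinite.
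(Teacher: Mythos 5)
Your linearization $\mathbf E_n = \mathbf L_n - \mathbf Q_n + \mathbf R_n$ and the two moment identities $\mathbb E[\mathbf L_n^2]=\tfrac{\pi^2}{4n}\sigma(\mathbf M\odot\mathbf A)^2$, $\mathbb E[\mathbf Q_n]=\tfrac{\pi^2}{8n}(\mathbf M\odot\boldsymbol\Sigma)\odot(\boldsymbol1-\boldsymbol\Gamma^{\odot 2})$ are correct and almost certainly the spine of the paper's argument; the test-direction extractions of the first two terms are fine in principle. The trouble is with the third term. Writing $Z:=\mathbf M\odot\boldsymbol\Sigma$, the degenerate $U$-statistic second moment you identify has entry $\tfrac{2(n-1)}{n^3}\sum_m Z_{im}Z_{mj}(\Gamma_{ij}-\Gamma_{im}\Gamma_{mj})^2$, whereas $(\sigma(Z)^2\odot\boldsymbol\Gamma)_{ij}=\Gamma_{ij}\sum_m Z_{im}Z_{mj}(\Gamma_{ij}-\Gamma_{im}\Gamma_{mj})$. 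These are genuinely different matrices and the Loewner domination you need fails: on the diagonal the former is $\tfrac{2(n-1)}{n^3}\sum_m Z_{im}^2(1-\Gamma_{im}^2)^2$ while the latter is $\sum_m Z_{im}^2(1-\Gamma_{im}^2)$, so the $U$-statistic contribution is \emph{dominated by}, not dominating, $\tfrac{1}{n^2}\sigma(Z)^2\odot\boldsymbol\Gamma$, and the ratio $(1-\Gamma_{im}^2)$ can be arbitrarily small in the strongly correlated regime. So there is no uniform constant $c>0$ with $\mathbb E[(\mathbf Q_n^U)^2]\succeq \tfrac{c}{n^2}\sigma(Z)^2\odot\boldsymbol\Gamma$; that step as stated cannot go through, and it is not just a matter of careful Schur-product bookkeeping.

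A second, related problem is your claim that $\mathbb E[\mathbf L_n\mathbf Q_n^{\mathrm{diag}}]$ and the cross terms with $\mathbf R_n$ are of strictly smaller order. They are not: with $\mathbf Q_n^{\mathrm{diag}}\propto(\mathbf M\odot\boldsymbol\Sigma)\odot\tfrac1{n^2}\sum_k(\mathbf W^k)^{\odot2}$, one finds $\mathbb E[(D_n)_{im}(V_n)_{mj}]=\tfrac1{n^2}\mathbb E[W_{im}W_{mj}^2]=-\tfrac2{n^2}\Gamma_{mj}(\Gamma_{ij}-\Gamma_{im}\Gamma_{mj})$, hence $\mathbb E[\mathbf L_n\mathbf Q_n^{\mathrm{diag}}]_{ij}=-\tfrac{\pi^3}{8n^2}\sum_m (M\odot A)_{im}Z_{mj}\Gamma_{mj}(\Gamma_{ij}-\Gamma_{im}\Gamma_{mj})$, which is exactly of order $n^{-2}$; a similar computation puts $\mathbb E[\mathbf L_n\mathbf R_n]$ at order $n^{-2}$ as well. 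So the $n^{-2}$ scale, from which you must harvest both the bias term and the third term, contains competing contributions of unclear sign that are not accounted for. Until one proves that the full $n^{-2}$ piece of $\mathbb E[\mathbf E_n^2]$ (not just the $U$-statistic part) dominates a constant multiple of $\sigma(\mathbf M\odot\boldsymbol\Sigma)^2\odot\boldsymbol\Gamma$ — or extracts the third term by some other argument than a crude Loewner comparison — the proof has a genuine gap.
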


\subsection{Dithered quantization}

The results of Section \ref{JMsec:OneBitSign} are restricted to the estimation of correlation matrices of Gaussian distributions. Both limitations stem from the chosen quantization model: first, \eqref{JMeq:OneBitModel} is blind to the re-scaling of variances and, second, Grothendieck's identity only holds for Gaussian distributions. Nevertheless, by introducing a \emph{dither} to the one-bit quantizer in \eqref{JMeq:OneBitModel} we can fully estimate the covariance matrix of general subgaussian distributions. Dithering means adding artificial random noise (with a suitably chosen distribution) to the samples before quantizing them to improve reconstruction from quantized observations, cf.\ \cite{roberts1962picture,GrN98,GrS93}. In the context of one-bit compressed sensing, the effect of dithering was recently rigorously analyzed in \cite{BFN17,DiM18a,DiM18b,JMP19,KSW16}.

To be precise, we require two bits per entry of each sample vector where each bit is dithered by an independent uniformly distributed dither, i.e., we are given
\begin{align} \label{JMeq:OneBitDithered}
	\mathrm{sign} (\mathbf X^k + \boldsymbol \tau^k), \ \mathrm{sign} (\mathbf X^k + \bar{\boldsymbol \tau}^k)^\top, \qquad k=1,\ldots,n,
\end{align}
where the dithering vectors $\boldsymbol \tau^1,\bar{\boldsymbol \tau}^1,\ldots,\boldsymbol \tau^n,\bar{\boldsymbol \tau}^n$ are independent and uniformly distributed in $[-\lambda,\lambda]^p$, with $\lambda>0$ to be specified later. From the quantized observations in \eqref{JMeq:OneBitDithered}, we construct the estimator
\begin{align} \label{JMeq:TwoBitEstimator}
	\tilde{\boldsymbol \Sigma}_n^\text{dith} = \tfrac{1}{2}\tilde{\boldsymbol \Sigma}'_n + \tfrac{1}{2}(\tilde{\boldsymbol \Sigma}'_n)^\top
\end{align}
where
\begin{align} \label{JMeq:AsymmetricEstimator}
	\tilde{\boldsymbol \Sigma}'_n = \frac{\lambda^2}{n}\sum_{k=1}^n \mathrm{sign} (\mathbf X^k + \boldsymbol \tau^k) \mathrm{sign} (\mathbf X^k + \bar{\boldsymbol \tau}^k)^\top.
\end{align}

\begin{figure}[t]
	\centering
	\includegraphics[width=7cm]{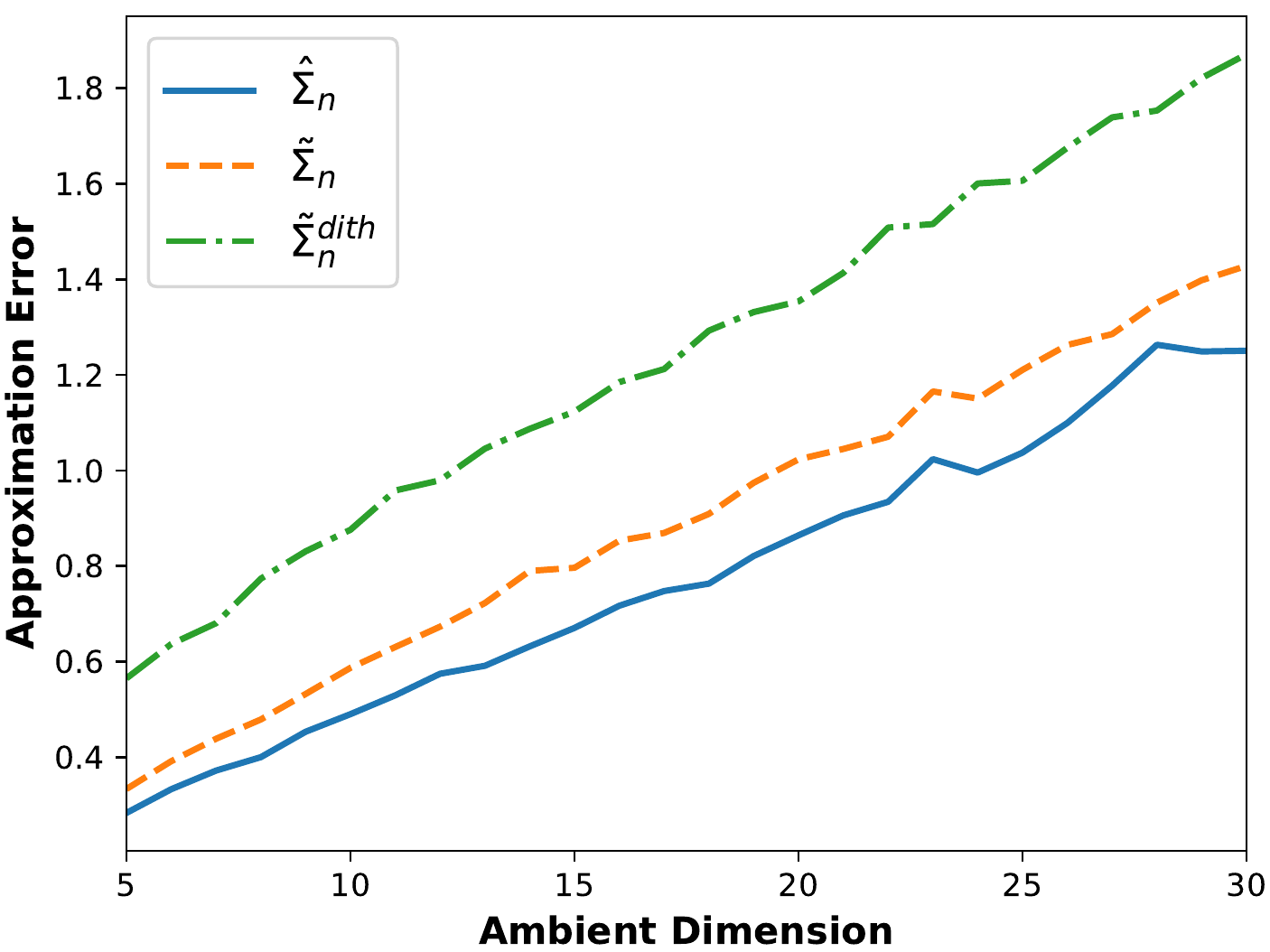}
	%
	%
	\caption{Comparison of both one-bit estimators \protect\cite{dirksen2021covariance} with $\hat{ \boldsymbol \Sigma }_n$ for $\boldsymbol \Sigma$ having ones on the diagonal. The plot depicts average estimation error in operator norm, for $n = 200$ and $p$ varying from $5$ to $30$. The dithered estimator here uses $\lambda \in (0,4 \| \boldsymbol \Sigma \|_{\infty})$ optimized via grid-search.}
	\label{JMfig:AllEstimators}       
\end{figure}

\begin{theorem} [{\cite[Theorem 4]{dirksen2021covariance}}] \label{JMthm:OperatorDitheredMask}
	Let $\mathbf X$ be a mean-zero, $K$-subgaussian vector with covariance matrix $\mathbb E [ \mathbf X \mathbf X^\top ] = \boldsymbol \Sigma$. Let $\mathbf X^1,...,\mathbf X^n \overset{\mathrm{d}}{\sim} \mathbf X$ be i.i.d.\ samples of $\mathbf X$. Let $\mathbf M \in [0,1]^{p\times p}$ be a fixed symmetric mask. If $\lambda^2 \gtrsim \log(n) \| \boldsymbol \Sigma \|_{\infty}$, then with probability at least $1-e^{-t}$,  
	\begin{align*}
		\| \mathbf M \odot \tilde{\boldsymbol \Sigma}_n^\text{dith} &- \mathbf M \odot \boldsymbol \Sigma \| \\
		&\lesssim \|\mathbf M\|_{1\to 2}(\lambda\|\boldsymbol \Sigma \|^{1/2}+\lambda^2)\sqrt{\frac{\log(p)+t}{n}} + \lambda^2\|\mathbf M\| \frac{\log(p)+t}{n}.
	\end{align*}
	In particular, if $\lambda^2 \approx \log(n) \|\boldsymbol \Sigma \|_{\infty}$, we have
	\begin{align} \label{JMeqn:OperatorDitheredMask}
	\begin{split}
	    &\| \mathbf M \odot  \tilde{\boldsymbol \Sigma}_n^\text{dith} - \mathbf M \odot \boldsymbol \Sigma \| \\
	    &\lesssim \log(n) \| \mathbf M \|_{1\to 2} \sqrt{\frac{\|\boldsymbol \Sigma \| \ \|\boldsymbol \Sigma \|_{\infty}(\log(p)+t)}{n}} + \log(n)\|\mathbf M\|\|\boldsymbol \Sigma \|_{\infty}\frac{\log(p)+t}{n}.
    \end{split}
	\end{align}
\end{theorem}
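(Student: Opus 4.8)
The plan is to write $\mathbf M \odot \tilde{\boldsymbol\Sigma}'_n$ as an average of i.i.d.\ mean-zero random matrices plus a deterministic \emph{clipping bias}, and then apply the non-commutative matrix Bernstein inequality (e.g., \cite{vershynin2018high}); the symmetrization to $\tilde{\boldsymbol\Sigma}_n^{\mathrm{dith}}$ costs nothing, since $\mathbf M\odot\boldsymbol\Sigma$ is symmetric and symmetrization is a contraction for the operator norm.

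Two elementary facts drive everything. First, the \emph{dither identity}: if $\tau \sim \mathrm{Unif}([-\lambda,\lambda])$ then $\mathbb E[\mathrm{sign}(x+\tau)] = \lambda^{-1}\varphi_\lambda(x)$ for every $x \in \R$, where $\varphi_\lambda(x) := \mathrm{sign}(x)\min\{|x|,\lambda\}$ is the clipping at level $\lambda$. Second, the \emph{Schur-multiplier identity} $\mathbf M \odot (\mathbf u\mathbf v^\top) = \mathrm{diag}(\mathbf u)\,\mathbf M\,\mathrm{diag}(\mathbf v)$, valid for all vectors $\mathbf u,\mathbf v$. Set $\mathbf Y^k := \mathrm{sign}(\mathbf X^k + \boldsymbol\tau^k)$, $\bar{\mathbf Y}^k := \mathrm{sign}(\mathbf X^k + \bar{\boldsymbol\tau}^k)$ and $\mathbf A_k := \lambda^2\,\mathbf M \odot (\mathbf Y^k(\bar{\mathbf Y}^k)^\top)$, so that $\mathbf M \odot \tilde{\boldsymbol\Sigma}'_n = \frac1n\sum_{k=1}^n \mathbf A_k$ with $\mathbf A_1,\dots,\mathbf A_n$ i.i.d. Because the $p$ coordinates of $\boldsymbol\tau^k$ are independent and $\boldsymbol\tau^k$ is independent of $\bar{\boldsymbol\tau}^k$, the dither identity gives, conditionally on $\mathbf X^k$, that $\mathbb E[\mathbf Y^k(\bar{\mathbf Y}^k)^\top \mid \mathbf X^k] = \lambda^{-2}\varphi_\lambda(\mathbf X^k)\varphi_\lambda(\mathbf X^k)^\top$, hence $\mathbb E[\mathbf A_k] = \mathbf M \odot \boldsymbol\Sigma^{\mathrm{cl}}$ with $\boldsymbol\Sigma^{\mathrm{cl}} := \mathbb E[\varphi_\lambda(\mathbf X)\varphi_\lambda(\mathbf X)^\top]$. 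This reduces the problem to bounding $\|\frac1n\sum_k(\mathbf A_k-\mathbb E\mathbf A_k)\|$ (concentration) and $\|\mathbf M\odot(\boldsymbol\Sigma^{\mathrm{cl}}-\boldsymbol\Sigma)\|$ (bias).

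For the concentration term I would apply matrix Bernstein via the Hermitian dilation, so the dimensional factor is $\log(2p)$. By the Schur-multiplier identity $\mathbf A_k = \lambda^2\,\mathrm{diag}(\mathbf Y^k)\,\mathbf M\,\mathrm{diag}(\bar{\mathbf Y}^k)$, and since $\mathbf Y^k,\bar{\mathbf Y}^k \in \{-1,1\}^p$ the diagonal factors are orthogonal, so $\|\mathbf A_k\| = \lambda^2\|\mathbf M\|$; together with $\|\mathbb E\mathbf A_k\| = \|\mathbf M\odot\boldsymbol\Sigma^{\mathrm{cl}}\| \le \|\boldsymbol\Sigma^{\mathrm{cl}}\|_\infty\|\mathbf M\| \lesssim \lambda^2\|\mathbf M\|$ (using $\lambda^2 \gtrsim \|\boldsymbol\Sigma\|_\infty \ge \|\boldsymbol\Sigma^{\mathrm{cl}}\|_\infty$), the per-summand bound is $\|\mathbf A_k - \mathbb E\mathbf A_k\| \lesssim \lambda^2\|\mathbf M\|$, which produces the sub-dominant term $\lambda^2\|\mathbf M\|(\log p+t)/n$. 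For the variance, the identity $\mathrm{diag}(\bar{\mathbf Y}^k)^2 = \mathbf I$ collapses $\mathbf A_k\mathbf A_k^\top = \lambda^4\,\mathrm{diag}(\mathbf Y^k)\,\mathbf M^2\,\mathrm{diag}(\mathbf Y^k)$ with $\mathbf M^2 = \mathbf M\mathbf M^\top \succeq \boldsymbol 0$; taking expectations entrywise (diagonal entries unchanged, off-diagonal ones multiplied by $\lambda^{-2}\Sigma^{\mathrm{cl}}_{ij}$) gives $\mathbb E[\mathbf A_k\mathbf A_k^\top] = \lambda^4\,\mathrm{diag}(\mathbf M^2) + \lambda^2\big(\mathbf M^2\odot\boldsymbol\Sigma^{\mathrm{cl}} - \mathrm{diag}(\mathbf M^2\odot\boldsymbol\Sigma^{\mathrm{cl}})\big)$, and likewise for $\mathbf A_k^\top\mathbf A_k$ (roles of $\mathbf Y^k,\bar{\mathbf Y}^k$ swapped). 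Since $\|\mathrm{diag}(\mathbf M^2)\| = \max_i(\mathbf M^2)_{ii} = \|\mathbf M\|_{1\to 2}^2$ and the Schur product theorem yields $\|\mathbf M^2\odot\boldsymbol\Sigma^{\mathrm{cl}}\| \le (\max_i(\mathbf M^2)_{ii})\|\boldsymbol\Sigma^{\mathrm{cl}}\| \lesssim \|\mathbf M\|_{1\to 2}^2\|\boldsymbol\Sigma\|$ and $\|\mathrm{diag}(\mathbf M^2\odot\boldsymbol\Sigma^{\mathrm{cl}})\| \le \|\mathbf M\|_{1\to 2}^2\|\boldsymbol\Sigma\|_\infty$, both variance matrices have norm $\lesssim \|\mathbf M\|_{1\to 2}^2\,\lambda^2(\lambda^2 + \|\boldsymbol\Sigma\|)$. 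Matrix Bernstein then gives, with probability $\ge 1 - e^{-t}$, $\|\frac1n\sum_k(\mathbf A_k - \mathbb E\mathbf A_k)\| \lesssim \|\mathbf M\|_{1\to 2}\,\lambda\sqrt{(\lambda^2 + \|\boldsymbol\Sigma\|)(\log p + t)/n} + \lambda^2\|\mathbf M\|\,(\log p + t)/n$, and $\lambda\sqrt{\lambda^2 + \|\boldsymbol\Sigma\|} \le \lambda^2 + \lambda\|\boldsymbol\Sigma\|^{1/2}$ yields exactly the first displayed estimate of the theorem, up to the bias.

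It remains to check that the clipping bias is of lower order. The matrix $\mathbf X\mathbf X^\top - \varphi_\lambda(\mathbf X)\varphi_\lambda(\mathbf X)^\top$ vanishes on $\{\|\mathbf X\|_\infty \le \lambda\}$; expanding $\mathbf X - \varphi_\lambda(\mathbf X)$ coordinatewise and using that each $X_i$ is subgaussian with $\|X_i\|_{\psi_2} \lesssim K\|\boldsymbol\Sigma\|_\infty^{1/2}$, every entry of $\mathbb E[\mathbf X\mathbf X^\top - \varphi_\lambda(\mathbf X)\varphi_\lambda(\mathbf X)^\top]$ is $\lesssim \|\boldsymbol\Sigma\|_\infty \exp(-c\lambda^2/(K^2\|\boldsymbol\Sigma\|_\infty))$, so $\|\mathbf M\odot(\boldsymbol\Sigma^{\mathrm{cl}} - \boldsymbol\Sigma)\| \lesssim p\,\|\boldsymbol\Sigma\|_\infty\exp(-c\lambda^2/(K^2\|\boldsymbol\Sigma\|_\infty))$, and the same estimate gives $\|\boldsymbol\Sigma^{\mathrm{cl}}\| \lesssim \|\boldsymbol\Sigma\|$ as used above. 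For $\lambda^2 \gtrsim \log(n)\|\boldsymbol\Sigma\|_\infty$ with a sufficiently large ($K$-dependent) constant this is dominated by the statistical term, establishing the first bound; the ``in particular'' assertion follows by inserting $\lambda^2 \asymp \log(n)\|\boldsymbol\Sigma\|_\infty$. The main obstacle is the variance computation: one must exploit the collapse $\mathbf A_k\mathbf A_k^\top = \lambda^4\,\mathrm{diag}(\mathbf Y^k)\mathbf M^2\mathrm{diag}(\mathbf Y^k)$ and extract the factor $\|\mathbf M\|_{1\to 2}^2 = \max_i(\mathbf M^2)_{ii}$ through the Schur product theorem, since the cruder bound $\|\mathbb E\mathbf A_k\mathbf A_k^\top\| \le \|\mathbf A_k\|^2$ would only give the weaker constant $\|\mathbf M\|$ in the leading term; checking negligibility of the clipping bias, the one place subgaussianity enters, is routine but somewhat fiddly.
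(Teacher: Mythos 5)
Your plan — use the dither identity $\mathbb E[\mathrm{sign}(x+\tau)] = \lambda^{-1}\varphi_\lambda(x)$ to express $\mathbb E[\mathbf A_k] = \mathbf M \odot \boldsymbol\Sigma^{\mathrm{cl}}$, exploit the Schur-multiplier representation $\mathbf A_k = \lambda^2\,\mathrm{diag}(\mathbf Y^k)\mathbf M\,\mathrm{diag}(\bar{\mathbf Y}^k)$ to compute $\|\mathbf A_k\|$ and $\mathbb E[\mathbf A_k\mathbf A_k^\top]$ exactly, and run matrix Bernstein via Hermitian dilation — is the right one and, as far as the concentration term goes, I believe it reproduces the argument in \cite{dirksen2021covariance} essentially step for step; the computation of the variance matrices and the extraction of $\|\mathbf M\|_{1\to 2}^2$ via the Schur product theorem is exactly the point where the sharper constant in the leading term comes from.

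There is, however, a genuine gap in the treatment of the clipping bias $\|\mathbf M\odot(\boldsymbol\Sigma^{\mathrm{cl}}-\boldsymbol\Sigma)\|$. You bound it by $p\,\|\boldsymbol\Sigma^{\mathrm{cl}}-\boldsymbol\Sigma\|_\infty$, which indeed decays like $p\,\|\boldsymbol\Sigma\|_\infty\exp(-c\lambda^2/(K^2\|\boldsymbol\Sigma\|_\infty))$, and then assert that $\lambda^2 \gtrsim \log(n)\|\boldsymbol\Sigma\|_\infty$ (with a large $K$-dependent constant) makes this negligible. But the hypothesis involves only $\log n$, not $\log(np)$, so this exponential can only absorb powers of $n$, not the extra factor $p$. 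For a sparse mask (say $\mathbf M=\mathbf I$, so $\|\mathbf M\|_{1\to 2}=\|\mathbf M\|=1$) the statistical term scales like $\log(n)\|\boldsymbol\Sigma\|_\infty(\log p + t)/n$, and no choice of the implied constant can force $p\,\|\boldsymbol\Sigma\|_\infty n^{-\beta}$ below this for all admissible $p$ when $p$ is much larger than $n$. The bound $\|\mathbf M\odot \mathbf D\|\le p\|\mathbf D\|_\infty$ is simply too crude, and the theorem as stated does not hold with this bias estimate in place.

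The fix is to bound the bias in a way that produces $\|\mathbf M\|$ rather than $p$. Write $\mathbf R := \mathbf X - \varphi_\lambda(\mathbf X)$, so that $\boldsymbol\Sigma^{\mathrm{cl}}-\boldsymbol\Sigma = -\mathbb E[\varphi_\lambda(\mathbf X)\mathbf R^\top + \mathbf R\,\mathbf X^\top]$ entrywise, i.e.\ $D_{ij} = -\langle \varphi_\lambda(X_i),R_j\rangle_{L^2} - \langle R_i, X_j\rangle_{L^2}$. Each of the two matrices thus factors through $L^2$-inner products of random variables whose $L^2$-norms are controlled: $\|\varphi_\lambda(X_i)\|_{L^2}\le \|X_i\|_{L^2}\le \|\boldsymbol\Sigma\|_\infty^{1/2}$ and $\|R_i\|_{L^2}\lesssim K\|\boldsymbol\Sigma\|_\infty^{1/2}\exp(-c\lambda^2/(K^2\|\boldsymbol\Sigma\|_\infty))$. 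The Grothendieck--Haagerup characterization of Schur multipliers (for any factorization $A_{ij}=\langle \mathbf b_i,\mathbf c_j\rangle$ one has $\|\mathbf A\odot\mathbf M\|\le \max_i\|\mathbf b_i\|\cdot\max_j\|\mathbf c_j\|\cdot\|\mathbf M\|$) then yields
\begin{equation*}
\|\mathbf M\odot(\boldsymbol\Sigma^{\mathrm{cl}}-\boldsymbol\Sigma)\|\;\lesssim\; K\,\|\boldsymbol\Sigma\|_\infty\,e^{-c\lambda^2/(K^2\|\boldsymbol\Sigma\|_\infty)}\;\|\mathbf M\|,
\end{equation*}
with no factor of $p$. (The same factorization argument also gives $\|\boldsymbol\Sigma^{\mathrm{cl}}\|\lesssim\|\boldsymbol\Sigma\|$ for the variance step, replacing your separate estimate.) Now comparing against the sub-dominant statistical term $\lambda^2\|\mathbf M\|(\log p + t)/n \gtrsim \log(n)\|\boldsymbol\Sigma\|_\infty\|\mathbf M\|/n$, the bias is indeed dominated once the $K$-dependent constant in $\lambda^2 \gtrsim \log(n)\|\boldsymbol\Sigma\|_\infty$ is large enough, uniformly in $p$. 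With this replacement your argument closes; everything else (dither identity, Schur-multiplier collapse $\mathbf A_k\mathbf A_k^\top = \lambda^4\mathrm{diag}(\mathbf Y^k)\mathbf M^2\mathrm{diag}(\mathbf Y^k)$, Schur product theorem for the variance, Hermitian dilation in matrix Bernstein, symmetrization at the end) is sound.
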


The error bound \eqref{JMeqn:OperatorDitheredMask} coincides (up to different logarithmic factors) with the best known estimate for the masked sample covariance matrix in Theorem \ref{JMthm:Chen}, even though the sample covariance matrix requires direct access to the samples $\mathbf X^k$, cf. Figure \ref{JMfig:AllEstimators}. This performance, however, heavily depends on the choice of $\lambda$, as Figure \ref{JMfig:TuningLambda} shows. Furthermore, it should be mentioned that there are cases when the performance of the dithered estimator is significantly worse than the performance of the sample covariance matrix. Let us consider for simplicity the case $\mathbf M=\boldsymbol{1}$). If the samples $\mathbf X^k$ are Gaussian, then \cite{koltchinskii2014concentration} shows that
\begin{align*}
	\mathbb E [ \| \hat{\boldsymbol \Sigma}_n - \boldsymbol \Sigma \| ] \simeq \sqrt{\frac{\|\boldsymbol \Sigma \| \mathrm{Tr}(\boldsymbol \Sigma)}{n}} + \frac{\mathrm{Tr}(\boldsymbol \Sigma)}{n},
\end{align*}
whereas \eqref{JMeqn:OperatorDitheredMask} yields 
\begin{align*}
	\mathbb E [ \| \tilde{\boldsymbol \Sigma}_n^\text{dith} - \boldsymbol \Sigma \| ] 
	\lesssim 
	\log(n) \sqrt{\frac{p\|\boldsymbol \Sigma \| \ \|\boldsymbol \Sigma \|_{\infty}\log(p)}{n}} + \log(n)\frac{p\|\boldsymbol \Sigma \|_{\infty}\log(p)}{n}
\end{align*}
via tail integration. Since $\mathrm{Tr}(\boldsymbol \Sigma) \leq p\|\boldsymbol \Sigma \|_{\infty}$, the second estimate is worse in general. Numerical experiments in \cite{dirksen2021covariance} have shown that this difference is not an artifact of proof. Simply put, $\hat{\boldsymbol \Sigma}_n$ and $\tilde{\boldsymbol \Sigma}_n^\text{dith}$ perform similarly if $\boldsymbol \Sigma$ has a constant diagonal, whereas $\hat{\boldsymbol \Sigma}_n$ performs significantly better whenever $\mathrm{Tr}(\boldsymbol \Sigma) \ll p\|\boldsymbol \Sigma \|_{\infty}$.

\begin{figure}[t]
	\centering
	\includegraphics[width=7.2cm]{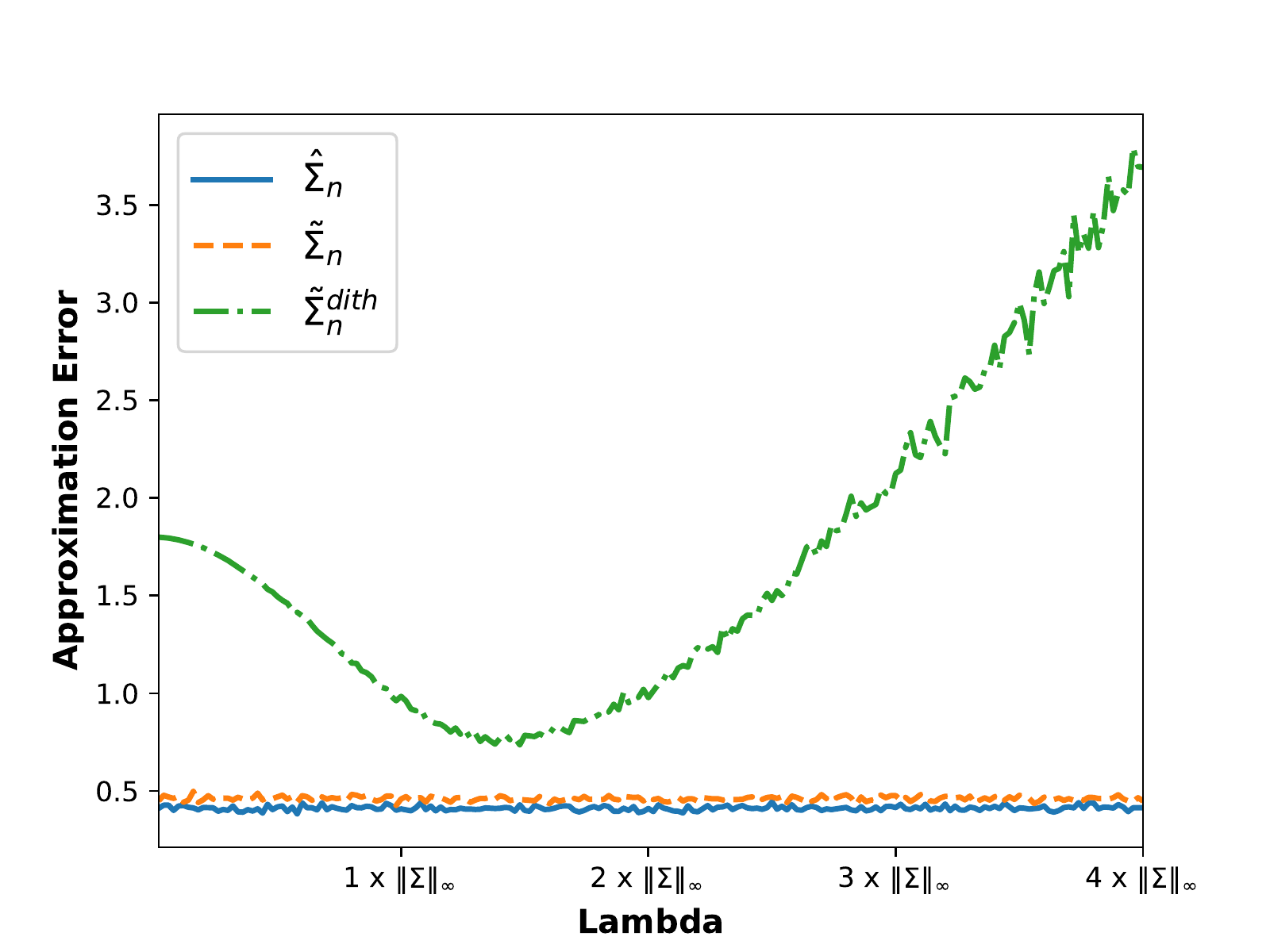}
	%
	%
	\caption{Experiment on the influence of $\lambda$ \protect\cite{dirksen2021covariance} on the reconstruction performance of $\tilde{\boldsymbol \Sigma}_n^\text{dith}$. The plot depicts average estimation error in operator norm, for $n = 200$, $p=5$, and $\lambda$ varying from $0$ to $4 \| \boldsymbol \Sigma \|_{\infty}$. Though not affected by changes in $\lambda$, sample covariance matrix, and un-dithered estimator are given for reference.}
	\label{JMfig:TuningLambda}       
\end{figure}

Theorem \ref{JMthm:OperatorDitheredMask} can be extended to heavier-tailed random vectors. This, however, requires a larger choice of $\lambda$ and thus more samples to reach the same error. For a sub-exponential random vector $\mathbf X$, one would already need $\lambda^2 \gtrsim \log(n)^2 \cdot \max_{i \in [p]} \| X_i \|_{\psi_1}^2$.
The dependence of $\lambda$ on $n$, both in the latter statement and Theorem \ref{JMthm:OperatorDitheredMask} can be observed in numerical experiments \cite{dirksen2021covariance} as well.

Let us finally mention that the quantized estimators in \eqref{JMeq:OneBitEstimator} and \eqref{JMeq:TwoBitEstimator} are not necessarily positive semi-definite as one expects from covariance matrices. In applications one would thus replace both estimators by their projection onto the cone of positive semi-definite matrices, which is efficiently computed via the singular value decomposition \cite[Section 8.1.1]{boyd2004convex}. The obtained estimates also apply to the projected estimators since convex projections are $1$-Lipschitz.

\section*{Acknowledgements}

All authors acknowledge  funding  by the  Deutsche  Forschungsgemeinschaft  (DFG,  German  Research  Foundation)  through  the  project CoCoMIMO  funded  within  the  priority  program  SPP  1798 Compressed  Sensing  in  Information Processing(COSIP).

\bibliographystyle{IEEEtranS}                 
\bibliography{bibfileJM}                

\end{document}